\title{On the derivatives of the Liouville currents}
\address{XD: Department of Mathematics, Kingsborough Community College of the City University of New York, 2001 Oriental Blvd, Brooklyn, NY 11235}
\email{Xinlong.Dong@kbcc.cuny.edu}
\author{Xinlong Dong, Dragomir \v Sari\' c and Zhe Wang}
\address{DS: Department of Mathematics, Queens College of the City University of New York, 65--30 Kissena Blvd., Flushing, NY 11367}
\address{DS: Mathematics PhD. Program, Graduate Center of the City University of New York, 365 Fifth Avenue, New York, NY 10016-4309}
\email{Dragomir.Saric@qc.cuny.edu}
\address{ZW: Department of Mathematics, Bronx Community College of the City University of New York, 2155 University Ave, Bronx, NY 10453}
\email{zhe.wang@bcc.cuny.edu}
\thanks{The second author was partially supported by the collaboration grant  346391 from the Simons Foundation and a PSC-CUNY research grant.}
\newtheorem{thm}{Theorem}
\newtheorem{cor}[thm]{Corollary}
\newtheorem{lem}[thm]{Lemma}
\theoremstyle{definition}
\newtheorem{rem}[thm]{Remark}
\theoremstyle{plain}
\newcommand{\D}{\mathbb D}
\newcommand{\C}{\mathbb C}
\newcommand{\R}{\mathbb R}
\newcommand{\Z}{\mathbb Z}
\newcommand{\HH}{\mathbb H}
\newcommand{\T}{\mathcal  T}
\newcommand{\HHH}{\mathcal H}
\newcommand{\W}{\mathbf  W}
\newcommand{\LLL}{\mathcal  L}
\renewcommand{\leq}{\leqslant}
\renewcommand{\geq}{\geqslant}
\renewcommand{\epsilon}{\varepsilon}
\renewcommand{\phi}{\varphi}
\newcommand{\leqnomode}{\tagsleft@true\let\veqno\@@leqno}
\newcommand{\reqnomode}{\tagsleft@false\let\veqno\@@eqno}
\subjclass{}
\keywords{}
\date{\today}
\begin{document}

\begin{abstract}
The Liouville map, introduced by Bonahon, assigns to each point in the Teichm\"uller space a natural Radon measure on the space of geodesics of the base surface. The Liouville map is real analytic and it even extends to a holomorphic map of a neighborhood of the Teichm\"uller space in the Quasi-Fuchsian space of an arbitrary conformally hyperbolic Riemann surface. The earthquake paths and by their extension quake-bends, introduced by Thurston, are particularly nice real-analytic and holomorphic paths in the Teichm\"uller and the Quasi-Fuchsian space, respectively. We find a geometric expression for the derivative of the Liouville map along earthquake paths. 
\end{abstract}

\maketitle

Denote by $X$ an arbitrary conformally hyperbolic Riemann surface. In particular, $X$ can be the upper half-plane $\HH$ or a surface with an infinitely generated fundamental group or an infinite area surface with finitely generated fundamental group, or a finite area surface. The universal covering $\widetilde{X}$  of $X$ is isometric to the upper half-plane $\HH =\{ z=x+iy:y>0\}$ and $X$ is isometrically equivalent to $\HH /\Gamma$ for a Fuchsian group $\Gamma$. The Teichm\"uller space $\T (X)$ is the space of quasiconformal maps from the base surface $X$ onto variable Riemann surfaces up to isometries and homotopies.

The space of geodesics $G(\widetilde{X})$ of the universal covering $\widetilde{X}$ supports a natural {\it Liouville measure} which is the unique (up to scalar multiple) measure of full support that is invariant under the isometries of $\widetilde{X}$ which completely determines the Riemann surface $X$. In general, a $\pi_1(X)$-invariant Radon measure on $G(\widetilde{X})$ is called a {\it geodesic current} for $X$.   
Bonahon \cite{Bonahon} introduced the {\it Liouville map} $$\LLL :\T (X)\to \mathcal{G}(X)$$ from the Teichm\"uller space $\T (X)$ to the space of geodesic currents $\mathcal{G} (X)$ that assigns to each 
quasiconformal deformation $[f:X\to Y] \in \T (X)$ of the Riemann surface X the pullback of the Liouville measure of $Y$ under the deformation $f$.

When the Riemann surface $X$ is compact, Bonahon \cite{Bonahon} used the Liouville map to introduce an alternative description of the Thurston boundary to $\T (X)$. The second author \cite{Saric}, and more recently, Bonahon and the second author \cite{BonahonSaric} introduced the Thurston boundary to Teichm\"uller spaces of arbitrary conformally hyperbolic Riemann surfaces.  
Using the description of the topology on the space of bounded geodesic currents $\mathcal{G}_b (X)$ for a Riemann surface  $X$ given in \cite{BonahonSaric}, the first two authors  introduced a space of bounded H\"older distributions $\mathcal{H}_b(X)$ (see \cite{DongSaric} and \S \ref{sec:Holder}) which contains bounded geodesic currents and they proved that the Liouville map
$$
\mathcal{L} :\T (X)\to\mathcal{H}_b(X)
$$
is real analytic. Prior to \cite{DongSaric}, Bonahon and S\" ozen \cite{BonahonSozen} proved that the Liouville map is differentiable when $X$ is a compact surface, the second author \cite{Saric} extended this result to all conformally hyperbolic Riemann surfaces. Using Bers' simultaneous uniformization, Otal \cite{Otal} proved that the Liouville map is real-analytic in the topology introduced in \cite{Saric1} by extending the Liouville map to an open neighborhood of $\mathcal{T}(X)$ inside the QuasiFuchsian space $\mathcal{QF}(X)$, where $\mathcal{T}(X)$ is realized as a totally real analytic submanifold of $\mathcal{QF}(X)$.  The space of bounded H\"older distributions $\mathcal{H}_b(X)$ simplifies the description of the topology in \cite{Saric1}.

In addition to the Liouville currents, a conformally hyperbolic Riemann surface supports measured laminations. Thurston \cite{Thurston} introduced a natural deformation of Riemann surfaces by left shearing along geodesics of the support of a measured lamination $\mu$ with the amount given by the transverse measure called an {\it earthquake map} $E^{\mu}:X\to X^{\mu}$ (see \S \ref{sec:earthquakes}). Thurston \cite{Thurston} proved that any homeomorphic deformation of $X$ onto another Riemann surface can be obtained by a unique earthquake. A quasiconformal deformation of a Riemann surface is obtained by a unique earthquake $E^{\mu}$ with {\it bounded} measured lamination (see \cite{Saric2}). In addition, Miyachi and the second author \cite{MiyachiSaric} proved that the natural correspondence between the quasiconformal deformations of $X$ (namely, the Teichm\"uller space $\T (X)$) and the space of bounded measured lamination is a homeomorphism. 

Let $\widetilde\mu$ be the lift of $\mu$. If $\widetilde\mu$ is a bounded measured lamination and $t>0$, then $t\widetilde\mu$ is also a bounded measured lamination. Therefore $t\mapsto E^{t\widetilde\mu}$ is a path in the Teichm\"uller space called an {\it earthquake path}. The second author \cite{Saric2} proved that an earthquake path is a real analytic path in the Teichm\"uller space $\T (X)$. Therefore the composite path 
$$
t\mapsto \mathcal{L}(E^{t\widetilde\mu})
$$
is a real analytic path in $\mathcal{H}_b(X)$ and we compute its tangent vector.

\begin{thm}
\label{thm:earthq-der-intro}
Let $\mu$ be a bounded measured lamination on a conformally hyperbolic Riemann surface $X$ and let $\mathcal{L}:\T (X)\to\mathcal{H}_b(X)$ be the Liouville map. The image of the tangent vector $\dot{E}^{\widetilde\mu}:=\frac{d}{dt}E^{t\widetilde\mu}|_{t=0}$ to the earthquake path $t\mapsto E^{t\widetilde\mu}$ is given by the formula
$$
d\mathcal{L}(\dot{E}^{\widetilde\mu})(\xi)=\int_{G(\widetilde{X})} \int_{G(\widetilde{X})} \xi (h)\cos (g,h)dL_{\widetilde{X}}(h)d\widetilde{\mu}(g)
$$
where $\widetilde{\mu}$ is the lift of $\mu$ to $\widetilde{X}$, $h \in G(\widetilde{X})$, $g \in G(\widetilde{X})$, $\xi :G(\widetilde{X})\to\mathbb{C}$ is a H\"older continuous function with compact support and $L_{\widetilde{X}}$ is the Liouville measure on $G(\widetilde{X})$.
\end{thm}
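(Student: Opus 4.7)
The plan is to first establish the formula when $\widetilde{\mu}$ is a Dirac mass on a single geodesic, then extend to finitely supported laminations by linearity, and finally to arbitrary bounded measured laminations by a density argument. Both sides of the formula are linear in $\widetilde\mu$: the right-hand side obviously, and the left-hand side because the tangent vector $\dot E^{\widetilde\mu}$ depends linearly on $\widetilde\mu$ (it is the infinitesimal generator of a one-parameter family of earthquakes), and $d\mathcal L$ is a linear map on tangent vectors. It therefore suffices to treat $\widetilde\mu=\delta_g$.

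For $\widetilde\mu=\delta_g$, I would use the M\"obius invariance of $L_\HH$ to reduce to the case that $g$ is the imaginary axis of $\HH$. The boundary extension $\phi_u$ of the earthquake $E^{ug}$ fixes the endpoints $0,\infty$ of $g$, is the identity on one boundary arc and acts by the hyperbolic translation $x\mapsto e^u x$ on the other. Writing $\mathcal L(E^{ug})(\xi)=\int \xi\circ\phi_u^{-1}\,dL_\HH$ with $dL_\HH=\frac{da\,db}{(a-b)^2}$ on $G(\HH)$ and splitting according to whether $(a,b)$ crosses $g$, I find that the Liouville measure is preserved on non-crossing geodesics and becomes $\frac{e^u\,da\,db}{(e^u a-b)^2}$ on the crossing set after an elementary change of variable. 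Differentiating at $u=0$ gives the integrand $-\frac{a+b}{(a-b)^3}\,da\,db=-\frac{a+b}{a-b}\,dL_\HH$, and a short computation at the unique intersection point $z_0=i\sqrt{-ab}$ of $g$ with $h=(a,b)$ identifies this factor with $\cos(g,h)$, completing the single-leaf case.

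For a general bounded measured lamination $\widetilde\mu$, I would approximate by finitely supported $\widetilde\mu_n\to\widetilde\mu$ with uniformly bounded transverse masses. The inner integral $F(g):=\int \xi(h)\cos(g,h)\,dL_\HH(h)$ is bounded and H\"older continuous in $g$ on any compact set (the cosine depends smoothly on the varying intersection point and $\xi$ has compact support), so $\int F\,d\widetilde\mu_n\to\int F\,d\widetilde\mu$. For the left-hand side, the real analyticity of the earthquake path in $\T(X)$ from \cite{Saric2} combined with the real analyticity of $\mathcal L$ on $\mathcal H_b(X)$ from \cite{DongSaric} supplies the needed continuity in $\widetilde\mu$.

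The main obstacle is controlling the interchange of the limit in $n$ with the derivative at $t=0$. The key ingredient is a uniform second-order remainder bound
\[
\bigl|\mathcal L(E^{t\widetilde\mu_n})(\xi)-\mathcal L(\mathrm{id})(\xi)-t\,d\mathcal L(\dot E^{\widetilde\mu_n})(\xi)\bigr|\le C\,t^2,
\]
with $C$ depending only on the norm of $\widetilde\mu$ and on the H\"older norm and support of $\xi$ but not on $n$. Such an estimate should follow from the uniform real-analytic bounds on the earthquake flow on norm-bounded sets of laminations, after which the theorem follows by sending first $n\to\infty$ and then $t\to 0$.
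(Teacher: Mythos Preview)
Your single-geodesic computation and the linearity step for finitely supported laminations are correct and match the paper's Lemma~\ref{lem:simple-derivative} and equation~(\ref{eq:finite-earthquake-derivative}). The gap is in the passage to a general bounded $\widetilde\mu$.

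You assert that since $F(g):=\int \xi(h)\cos(g,h)\,dL_{\widetilde X}(h)$ is bounded and continuous, weak* convergence $\widetilde\mu_n\to\widetilde\mu$ gives $\int F\,d\widetilde\mu_n\to\int F\,d\widetilde\mu$. This fails for two reasons. First, $F$ does \emph{not} have compact support: a geodesic $g$ with one endpoint in $[a,b]\cup[c,d]$ and the other endpoint arbitrarily close to it still crosses geodesics $h$ in the support of $\xi$, so $F(g)$ can be nonzero for $g$ arbitrarily close to the diagonal of $S^1\times S^1$. Second, a bounded measured lamination $\widetilde\mu$ has infinite total mass in general, so boundedness of $F$ alone does not make $\int F\,d\widetilde\mu$ finite, let alone give convergence under weak* approximation. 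The substantive content of the paper's proof is precisely to overcome this: Lemma~\ref{lem:bound-deriv} establishes the exponential decay $|F(g)|\le C e^{-(1+\lambda)d_g}$ where $d_g$ is the distance from the origin to $g$, and the proof of Theorem~\ref{thm:earthquake-derivative} then partitions the support of $\widetilde\mu$ into annular families $\mathcal F_n$, counts that each $\mathcal F_n$ contains at most $O(e^n)$ subfamilies of $\widetilde\mu$-mass $\le\|\widetilde\mu\|_{Th}$, and sums $\sum_n e^n\cdot e^{-(1+\lambda)n}<\infty$ to obtain absolute integrability. Without this decay estimate your density argument does not close.

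On the interchange of limit and derivative, your proposed uniform second-order remainder bound is plausible but is not how the paper proceeds. The paper instead exploits the holomorphic extension: both the quake-bend $\tau\mapsto E^{\tau\widetilde\mu_n}$ and the extended Liouville map $\hat{\mathcal L}$ are holomorphic on a uniform disk $\{|\tau|<\delta\}$ (Theorem~\ref{thm:complexification} and \cite{Saric2}), so pointwise convergence of the holomorphic functions $\tau\mapsto\hat{\mathcal L}([E^{\tau\widetilde\mu_n}])(\xi)$ forces convergence of all derivatives (Theorem~\ref{thm:derivative-limit}). This replaces your real-analytic remainder estimate with a one-line Cauchy-integral argument and, more importantly, works uniformly in the two-parameter approximation $\widetilde\mu_{n,j}\to\widetilde\mu|_{K_j}\to\widetilde\mu$ that the paper uses to separate the weak* step (on compact $K_j$, where your argument would be valid) from the exhaustion step (where the decay lemma is needed).
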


Each tangent vector at a point of the Teichm\"uller space $\T (X)$ is obtained by taking the derivative along an earthquake path passing through that point. Moreover, the tangent space at a point of $\T (X)$ is homeomorphic to the space of bounded measured lamination on the surface corresponding to that point (see \cite{MiyachiSaric}). Theorem \ref{thm:earthq-der-intro} gives an explicit formula in the geometric terms for the tangent map $d\mathcal{L}$ to the Liouville map 
$\mathcal{L}:\T (X)\to\mathcal{H}_b(X)$. 

We call the extension of $\mathcal L$ defined by Otal \cite{Otal} the extended Liouville map $\hat{\mathcal{L}}$. The image under $\hat{\mathcal{L}}$ of the points not in $\mathcal{T}(X)$ consists of complex-valued distributions or, rather, finitely additive complex-valued measures. These objects usually require more restrictive analytic settings to be able to integrate against functions and to be able to take their derivatives. 

For $0<\lambda\leq 1$, let $H^{\lambda}(\widetilde{X})$ be the space of all H\"older continuous functions $\xi :G(\tilde{X})\to\mathbb{C}$ with compact support and H\"older exponent $\lambda$. Thurston introduced a natural complexification of earthquakes called quake-bends. 
The naturality of the geometric setting and the holomorphic motions allows us to extend the formula (given by a limit) for the first derivative of the Liouville map to the neighborhood of $\mathcal{T}(X)$ in $\mathcal{QF}(X)$ along the quake-bends. 

\begin{thm}
\label{thm:intro-quake-der}
Let $\xi\in H^{\lambda}(\widetilde{X})$ and let $\delta >0$ be the radius for which $\hat{\mathcal{L}}([E^{\tau \widetilde{\mu}}])(\xi )$ is defined. Then, for $\tau\in\mathbb{C}$ with $|\tau |<\delta$,
$$
\frac{d}{d\tau} \hat{\mathcal{L}}([E^{\tau \tilde{\mu}}])(\xi )=\lim_{j\to\infty}\int_{G(\tilde{X})}\int_{G(\tilde{X})}\xi (h)\cosh \bm{d}(E^{\tau\widetilde{\mu}|_{K_j}}(g),E^{\tau\widetilde{\mu}|_{K_j}}(h))dL_{[E^{\tau\widetilde{\mu}|_{K_j}}]}(h)d\tilde{\mu}|_{K_j}(g)
$$ where $\widetilde{\mu}|_{K_j}\to\widetilde{\mu}$ in the weak* topology as $j\to\infty$ (see \S 4) and $\bm{d}$ is the complex distance.
\end{thm}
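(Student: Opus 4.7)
The plan is to establish the formula first for the compactly supported approximations $\widetilde{\mu}|_{K_j}$ and then to pass to the limit using the holomorphicity of the extended Liouville map along quake-bends. For each $j$, the quake-bend $\tau\mapsto E^{\tau\widetilde{\mu}|_{K_j}}$ is a holomorphic path in $\mathcal{QF}(X)$ on a disk that contains $\{|\tau|<\delta\}$, so $\tau\mapsto\hat{\mathcal{L}}([E^{\tau\widetilde{\mu}|_{K_j}}])(\xi)$ is holomorphic there. The heart of the proof is to identify, for every $j$ and every $\tau$ with $|\tau|<\delta$, the derivative of this function with the integrand appearing in the theorem; the conclusion then follows by a holomorphic convergence argument in $j$.

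For compactly supported laminations, the derivative is computed by adapting the proof of Theorem \ref{thm:earthq-der-intro} to the Quasi-Fuchsian setting. At $\tau=0$, holomorphicity in $\tau$ together with the Cauchy--Riemann equations identifies the complex derivative with the real derivative of the real earthquake path, so Theorem \ref{thm:earthq-der-intro} applied to $\widetilde{\mu}|_{K_j}$ gives
$$\frac{d}{d\tau}\bigg|_{\tau=0}\hat{\mathcal{L}}([E^{\tau\widetilde{\mu}|_{K_j}}])(\xi)=\int_{G(\widetilde{X})}\int_{G(\widetilde{X})}\xi(h)\cos(g,h)\,dL_{\widetilde{X}}(h)\,d\widetilde{\mu}|_{K_j}(g),$$
and the identity $\cosh\bm{d}=\cos\theta$ for geodesics of $\HH$ (where the complex distance reduces to $\bm{d}=i\theta$) matches this with the integrand in the theorem at $\tau=0$. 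For a general $\tau$ in the disk, the one-parameter group structure of quake-bends along a single lamination reduces the derivative at $\tau$ to an infinitesimal computation anchored at the new basepoint $[E^{\tau\widetilde{\mu}|_{K_j}}]$; there, the velocity is represented by the pushforward of $\widetilde{\mu}|_{K_j}$ by $E^{\tau\widetilde{\mu}|_{K_j}}$ and the Liouville measure is $L_{[E^{\tau\widetilde{\mu}|_{K_j}}]}$. The same adaptation of Theorem \ref{thm:earthq-der-intro}, now performed in $\HH^3$ (where the genuinely three-dimensional geometry forces the planar angle to be replaced by the complex distance $\bm{d}$ and $\cos$ by $\cosh\bm{d}$), followed by the change of variables $g'=E^{\tau\widetilde{\mu}|_{K_j}}(g)$, $h'=E^{\tau\widetilde{\mu}|_{K_j}}(h)$, produces the integral formula claimed for each fixed $j$.

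The last step is to let $j\to\infty$. Since $\widetilde{\mu}|_{K_j}\to\widetilde{\mu}$ weak* and $\xi\in H^{\lambda}(\widetilde{X})$ has compact support, continuity of $\hat{\mathcal L}$ on quake-bends (from \cite{Otal} together with the H\"older distribution framework of \cite{DongSaric}) yields pointwise convergence $\hat{\mathcal L}([E^{\tau\widetilde{\mu}|_{K_j}}])(\xi)\to\hat{\mathcal L}([E^{\tau\widetilde{\mu}}])(\xi)$ for $|\tau|<\delta$. A Vitali--Cauchy argument for holomorphic functions, combined with local uniform boundedness on compact subdisks of $\{|\tau|<\delta\}$, promotes this to uniform convergence on such subdisks, which by Cauchy's estimates forces the derivatives to converge as well; this turns the per-$j$ derivative formula of the previous paragraph into the stated limit identity. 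The main obstacle is precisely securing the local uniform boundedness: although pointwise convergence is essentially immediate from continuity of the extended Liouville map, controlling $\{\hat{\mathcal L}([E^{\tau\widetilde{\mu}|_{K_j}}])(\xi)\}_j$ uniformly on compact subdisks $|\tau|\leq\delta'<\delta$ requires delicately combining the H\"older regularity and compact support of $\xi$, the weak* convergence of the truncations $\widetilde{\mu}|_{K_j}$, and the stability of the quake-bend cocycle under restriction of the lamination.
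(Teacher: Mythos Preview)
Your outline has the right broad shape---prove the formula for the truncations $\widetilde{\mu}|_{K_j}$ and then pass to the limit via holomorphicity---but the first step, as you present it, does not go through. The lamination $\widetilde{\mu}|_{K_j}$ is compactly supported but not discrete, and for complex $\tau$ the point $[E^{\tau\widetilde{\mu}|_{K_j}}]$ lies in $\mathcal{QF}(X)\setminus\mathcal{T}(X)$. Your ``change of basepoint'' reduction therefore lands at a genuinely quasi-Fuchsian point, where Theorem~\ref{thm:earthq-der-intro} (a statement about real earthquakes based in $\mathcal{T}(X)$) simply does not apply; there is no real infinitesimal earthquake computation to perform there, and $L_{[E^{\tau\widetilde{\mu}|_{K_j}}]}$ is a H\"older distribution rather than a measure, so the inner integral on the right needs a \emph{definition}, not just an evaluation. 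The sentence ``the same adaptation of Theorem~\ref{thm:earthq-der-intro}, now performed in $\HH^3$'' hides exactly the work that constitutes the proof.

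The paper closes this gap by inserting a second layer of approximation: replace $\widetilde{\mu}|_{K_j}$ by finitely supported $\widetilde{\mu}_{n,j}\to\widetilde{\mu}|_{K_j}$ (weak*). For a \emph{single} geodesic $g$ the derivative $\frac{d}{d\tau}\hat{\mathcal L}([E^{\tau\omega}_g])(\xi)$ is computed explicitly (Lemma~\ref{lem:simple-complex-derivative}): differentiate the pull-back integral for real $t$, obtain a concrete rational integrand, check via Lemma~\ref{lem:leibniz_rule} that this integral is holomorphic in $\tau$, and invoke uniqueness of analytic continuation; the cross-ratio identity of Lemma~\ref{lem:complex-distance} then identifies the integrand with $\cosh\bm d$. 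Summing over the finitely many leaves of $\widetilde{\mu}_{n,j}$ gives the formula for the elementary quake-bend, continuity of the inner integral in $g$ (Remark~\ref{rem:continuity}) passes to $n\to\infty$, and Theorem~\ref{thm:derivative-limit} (whose proof contains the uniform control you flag as the ``main obstacle'' via the $I_n$-decomposition, not a Vitali argument) handles $j\to\infty$. The point is that the analytic continuation is carried out at the level of a single leaf, where everything is an honest integral against a finite-variation complex density, rather than at the level of a general lamination where your basepoint-shift heuristic breaks down.
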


There is also a nice formula for the second derivative of $\mathcal{L}(E^{t\tilde{\mu}})$ at $t=0$. However, there is no corresponding second derivative formula (given by a limit) along quake-bends, see $\S 6$.
\begin{thm}
	\label{thm:intro-quake-second-der}
	Let $\mu$ be a bounded measured lamination on a conformally hyperbolic Riemann surface $X$ and let $\mathcal{L}:\T (X)\to\mathcal{H}_b(X)$ be the Liouville map. Then,
	\begin{flalign*}
		\frac{d^2}{dt^2}{\mathcal{L}}([E^{t\widetilde{\mu}}])(\xi ) \Big{|}_{t=0}
		&=\int_{G(\widetilde{X})} \int_{G(\widetilde{X})} \\
		&\quad \bigg\{\int_{G(\widetilde{X})}\xi (h) \big[\cos(g, h)\cos(g', h) - \frac{1}{2} \sin(g, h)\sin(g', h) e^{-d_h}\big]dL_{\widetilde X}(h)\bigg\}\\
		&\quad d\widetilde{\mu}(g) d\widetilde{\mu}(g')
	\end{flalign*} where $\widetilde{\mu}$ is the lift of $\mu$ to $\widetilde{X}$ and $d_h$ is the hyperbolic distance along $h$ from $g \cap h$ to $g' \cap h$.
\end{thm}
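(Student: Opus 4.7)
The strategy is to obtain a formula for the first derivative at an arbitrary time $t=s$ along the earthquake path, and then differentiate in $s$ at $s=0$. Using the flow property $E^{(s+u)\widetilde\mu}=E^{u\widetilde\mu_s}\circ E^{s\widetilde\mu}$, with $\widetilde\mu_s=(\widetilde E^{s\widetilde\mu})_*\widetilde\mu$, together with functoriality of the Liouville measure under composition, I apply Theorem~\ref{thm:earthq-der-intro} at the basepoint $[E^{s\widetilde\mu}]\in\T(X)$ in direction $\widetilde\mu_s$ and then transport the resulting double integral back to $G(\widetilde X)$ via the boundary homeomorphism $\widetilde E^{s\widetilde\mu}$. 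This yields
\[
F(s):=\frac{d}{dt}\Big|_{t=s}\mathcal{L}\bigl([E^{t\widetilde\mu}]\bigr)(\xi)
=\int_{G(\widetilde X)}\!\int_{G(\widetilde X)}
\xi(h)\cos_s\!\bigl(\widetilde E^{s\widetilde\mu}g,\widetilde E^{s\widetilde\mu}h\bigr)\,
dL_{[E^{s\widetilde\mu}]}(h)\,d\widetilde\mu(g),
\]
where $\cos_s$ denotes the cosine of the hyperbolic angle in $\widetilde X_s$ between the images of $g$ and $h$ under the boundary earthquake map. The target of the theorem is $F'(0)$.

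Differentiating $F$ at $s=0$ by the product rule splits $F'(0)$ into two pieces: one from the variation of the Liouville measure $L_{[E^{s\widetilde\mu}]}$ and one from the variation of the angle cosine $\cos_s$. For the first, fixing a leaf $g$ in the support of $\widetilde\mu$ and applying Theorem~\ref{thm:earthq-der-intro} with test function $h\mapsto\xi(h)\cos(g,h)$ gives
\[
\frac{d}{ds}\Big|_{s=0}\int\xi(h)\cos(g,h)\,dL_{[E^{s\widetilde\mu}]}(h)
=\int\!\int\xi(h)\cos(g,h)\cos(g',h)\,dL_{\widetilde X}(h)\,d\widetilde\mu(g'),
\]
and a further integration against $d\widetilde\mu(g)$ reproduces the $\cos(g,h)\cos(g',h)$ part of the stated integrand.

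For the second piece, by linearity of the infinitesimal earthquake vector field in the transverse measure one has
\[
\frac{d}{ds}\Big|_{s=0}\cos_s\!\bigl(\widetilde E^{s\widetilde\mu}g,\widetilde E^{s\widetilde\mu}h\bigr)
=\int_{G(\widetilde X)} K(g,h,g')\,d\widetilde\mu(g'),
\]
where $K(g,h,g')$ is the derivative at $s=0$ when $\widetilde\mu$ is replaced by the delta lamination $\delta_{g'}$ of weight one. The plan is to compute $K$ directly: the single-leaf earthquake along $g'$ is the identity on one side of $g'$ and a hyperbolic translation by $s$ along $g'$ on the other, so the infinitesimal displacements of the boundary endpoints of $g$ and $h$ are the Killing vector fields along $g'$ restricted to $\partial\HH$. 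Inserting these into the cross-ratio formula for $\cos(g,h)$ and normalizing $h$ to the imaginary axis by M\"obius invariance should yield $K(g,h,g')=-\tfrac12\sin(g,h)\sin(g',h)\,e^{-d_h(g\cap h,g'\cap h)}$. Configurations where $g$ or $g'$ fails to cross $h$ contribute nothing: the asserted kernel vanishes because one of the sine factors is zero, and the left-hand side vanishes too since non-crossing is an open relation and $\cos_s$ is then identically zero near $s=0$.

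The heart of the proof is the cross-ratio manipulation producing the factor $e^{-d_h}$, which bridges the Euclidean displacement of boundary endpoints natural to the earthquake description with the hyperbolic distance along $h$ between the two intersection points $g\cap h$ and $g'\cap h$. A helpful simplification is that leaves of $\widetilde\mu$ are pairwise disjoint, so one never has to handle the case where $g$ and $g'$ themselves cross, keeping the case analysis to whether $g'$ crosses $h$. Summing the two contributions of $F'(0)$ then assembles the integrand $\xi(h)\bigl[\cos(g,h)\cos(g',h)-\tfrac12\sin(g,h)\sin(g',h)\,e^{-d_h}\bigr]$ asserted in Theorem~\ref{thm:intro-quake-second-der}.
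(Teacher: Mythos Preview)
Your approach is genuinely different from the paper's, and it has a concrete gap. The paper never differentiates the first-derivative formula. Instead it computes $\frac{d^2}{dt^2}\mathcal{L}([E^{t\sigma}])(\xi)\big|_{t=0}$ directly for an elementary earthquake $\sigma$ supported on finitely many leaves, by brute-force calculus on the cross-ratio integral (this is where the $e^{-d_h}$ term emerges, from the pairwise interaction of two leaves $g_i,g_k$). It then passes to the limit $\widetilde\mu_{n,j}\to\widetilde\mu|_{K_j}\to\widetilde\mu$ using the holomorphicity in Theorem~\ref{thm:derivative-limit}, and the main technical content is an absolute-convergence estimate (their Lemma on $\int\!\int I(g,g')\,d\widetilde\mu\,d\widetilde\mu<\infty$) obtained by combining the bound $I(g,g')\leq\min\{Ce^{-(1+\lambda)d_g},Ce^{-(1+\lambda)d_{g'}}\}$ with a count of geodesic subfamilies in annuli.

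The gap in your argument is the step where you ``apply Theorem~\ref{thm:earthq-der-intro} with test function $h\mapsto\xi(h)\cos(g,h)$.'' That theorem requires $\xi\in H(\widetilde X)$, i.e.\ H\"older continuous with compact support, but $h\mapsto\cos(g,h)$ is not even continuous: as $h$ approaches a geodesic sharing an endpoint with $g$ from the crossing side, $\cos(g,h)\to\pm1$, while on the non-crossing side it is identically $0$. Multiplying by $\xi$ does not cure this unless $\xi$ happens to vanish there. The H\"older exponent is not a technicality you can drop: it is precisely what produces the $e^{-(1+\lambda)d_g}$ decay in Lemma~\ref{lem:bound-deriv}, and without the extra factor $e^{-\lambda d_g}$ the outer $d\widetilde\mu(g)$-integral diverges (there are $\sim e^n$ subfamilies at distance $n$). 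You would therefore need to redo the convergence analysis for this specific integrand from scratch, at which point you are essentially reproving the paper's key lemma. A second, smaller gap: you assert the product-rule decomposition and the value of $K(g,h,g')$ without justifying differentiation under the double integral or carrying out the cross-ratio computation; the paper's explicit two-leaf calculation is exactly what pins down the $-\tfrac12\sin(g,h)\sin(g',h)e^{-d_h}$ term, and it is not as short as your sketch suggests.
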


\section{The Teichm\"uller space}

Fix a conformally hyperbolic Riemann surface $X$ of possibly infinite hyperbolic area. The Riemann surface $X$ is identified with $\HH /\Gamma$, where $\Gamma <PSL_2(\R )$ is a Fuchsian group acting on the upper half-plane $\HH$. A quasiconformal map $f:\HH\to\HH$ is {\it normalized} if it fixes $0$, $1$ and $\infty$.
Two normalized quasiconformal map $f:\HH \to\HH$ that conjugate the Fuchsian group $\Gamma$ onto another Fuchsian group are {\it Teichm\"uller equivalent} if they agree on the ideal boundary $\hat{\R}=\R\cup\{\infty\}$ of the upper half-plane $\HH$. The Teichm\"uller space $\T (X)$ consists of all Teichm\"uller equivalence classes $[f]$ of normalized quasiconformal maps conjugating $\Gamma$ onto another Fuchsian group. 

The Beltrami coefficient of a quasiconformal map $f:\HH\to\HH$ is given by $\mu =\frac{f_{\bar{z}}}{ f_z}$ and it satisfies $\|\mu\|_{\infty}<1$. Conversely, given $\mu\in L^{\infty}(\HH )$ with $\|\mu\|_{\infty}<1$ there exists a unique (normalized) quasiconformal map $f:\HH \to\HH$ that fixes $0$, $1$ and $\infty$ whose Beltrami coefficient is $\mu$. A quasiconformal map $f$ conjugates $\Gamma$ onto another Fuchsian group if and only if $\mu\circ\gamma\frac{\overline{\gamma'}}{\gamma'}=\mu$ for all $\gamma\in \Gamma$. 
Two Beltrami coefficients are {\it Teichm\"uller equivalent} if the corresponding normalized quasiconformal maps are equal on $\hat{\R}$.  Therefore we can define $\T (X)$ to be a set of all Teichm\"uller classes $[\mu]$ of Beltrami coefficients that satisfy $\mu\circ\gamma (z)\frac{\overline{\gamma'(z)}}{\gamma'(z)}=\mu (z)$ for all $\gamma\in \Gamma$ and $z\in\HH$ (for example, see \cite{GardinerLakic}).

A normalized quasiconformal map $f:\hat{\mathbb{C}}\to\hat{\mathbb{C}}$ that conjugates  $\Gamma<PSL_2(\mathbb{R})$ onto a subgroup of $PSL_2(\mathbb{C})$ represents an element of the quasi-Fuchsian space $\mathcal{QF}(\Gamma )$. Two normalized quasiconformal maps $f$ and $g$ that conjugate $\Gamma$ onto a subgroup of $PSL_2(\C )$ are {\it equivalent} if they agree on $\hat{\R}=\R\cup\{\infty\}$. Denote by $[f]\in\mathcal{QF}(\Gamma )$ the corresponding equivalence class. Equivalently, we can define $\mathcal{QF}(\Gamma )$ to consist of all equivalence classes $[\mu ]$ of Beltrami coefficients on $\C$ where two Beltrami coefficients are equivalent if their corresponding normalized quasiconformal maps agree on $\hat{\R }$. 
If $\Gamma$ is trivial then $f$ is a quasiconformal map fixing $0$, $1$ and $\infty$ which does not necessarily preserve the upper half-plane $\HH$. When $X=\HH /\Gamma$, then we set $\mathcal{QF}(X)=\mathcal{QF}(\Gamma )$.

Bers introduced a complex Banach manifold structure to the Teichm\"uller space $\T (X)$. The complex chart around the basepoint $[0 ]\in \T (X)$ is obtained as follows. Let $\widetilde{\mu}$ be the Beltrami coefficient which equals $\mu$ in the upper half-plane $\HH$ and equals zero in the lower half-plane $\HH^{-}$. The solution $f=f^{\widetilde{\mu}}$ to the Beltrami equation $ f_{\bar{z}}=\widetilde{\mu}  f_z$  is conformal in the lower half-plane $\HH^{-}$. The Schwarzian derivative 
$$
S(f^{\widetilde{\mu}})(z)=\frac{(f^{\widetilde{\mu}})'''(z)}{(f^{\widetilde{\mu}})'(z)}-\frac{3}{2}\Big{(}\frac{(f^{\widetilde{\mu}})''(z)}{(f^{\widetilde{\mu}})'(z)}\Big{)}^2
$$
for $z\in\HH^{-}$ defines a holomorphic function $\phi (z)=S(f^{\widetilde{\mu}})(z)$ which satisfies
$(\phi \circ\gamma )(z) \gamma'(z)^2=\phi (z)$ and $\|\varphi\|_{b}:=\sup_{z\in\HH^{-}}|y^2\phi (z)|<\infty$, called a {\it cusped form} for $X$. The space of all cusped forms $\phi :\HH^{-}\to\C$ for $X$ is a complex Banach space $\mathcal{Q}_b(X)$ with the norm $\|\cdot\|_b$ (see \cite{GardinerLakic}).  

The Schwarzian derivative maps the unit ball in $L^{\infty}(\HH )$ onto an open subset of $\mathcal{Q}_b(X)$ and it projects to a homeomorphism $\Phi$ from $\T (X)$ to an open subset of $\mathcal{Q}_b(X)$ containing the origin. 
The open ball $B_{[0]}(\frac{1}{2}\log 2)$ in $\T (X)$ of radius $\frac{1}{2}\log 2$ and center $[0]$ maps under $\Phi$ onto an open set in $\mathcal{Q}_b(X)$ which contains the ball of radius $\frac{2}{3}$ and is contained in the ball of radius $2$ with center $0\in\mathcal{Q}_b(X)$. The map $\Phi :B_{[0]}(\frac{1}{2}\log 2)\to \mathcal{Q}_b(X)$ is a chart map for the base point $[0]\in\T (X)$ (see \cite[\S 6]{GardinerLakic}). The Ahlfors-Weill section provides an explicit formula for $\Phi^{-1}$ on the ball of radius $\frac{1}{2}$ and center $0$ in $\mathcal{Q}_b(X)$. Namely if $\phi\in \mathcal{Q}_b(X)$ with $\|\phi\|_{b}<\frac{1}{2}$ then Ahlfors and Weill prove that $\Phi^{-1}(\phi )=[-2y^2\phi (\bar{z})]$ (see \cite[\S 6]{GardinerLakic}). The Beltrami coefficient $\eta_{\phi}(z):=-2y^2\phi (\bar{z})$  is said to be {\it harmonic}. The Ahlfors-Weill formula gives an explicit expression of Beltrami coefficients that are representing points in $\T (X)$ corresponding to the holomorphic disks $\{t\phi :|t|<1,\|\phi\|_{b}<\frac{1}{2}\}$ in the chart in $\mathcal{Q}_b(X)$, namely $$\Phi^{-1}(\{t\phi :|t|<1\})=\{ [t\eta_\phi ]\in\T (X) :|t|<1\}.$$

By the Bers simultaneous uniformization theorem, the Quasi-Fuchsian space $\mathcal{QF}(X)$ is identified with $\T (X)\times \T (\bar{X})$, where $\bar{X}$ is the Riemann surface which is anti-conformal to $X$. A complex chart for $\mathcal{QF}(X)$ at the basepoint $[0]$ is the product of two open balls in $\mathcal{Q}_b(X)$ and in $\mathcal{Q}_b(\bar{X})$ with centers at the origins. The Teichm\"uller space $\T (X)$ embeds as a totally real submanifold of $\mathcal{QF}(X)$.

\section{The Liouville map and uniform H\"older topology}
\label{sec:Holder}

In this section we define the Liouville map, the space of bounded geodesic currents and the space of bounded H\"older distributions for the Riemann surface $X$ (see \cite{Bonahon}, \cite{BonahonSaric} and \cite{DongSaric}).

Recall that the conformally hyperbolic Riemann surface $X$ is identified with $\HH /\Gamma$, where $\Gamma$ is a Fuchsian group acting on the upper half-plane $\HH$. The space of oriented geodesics $G(\widetilde{X})=(\partial_{\infty}\widetilde{X}\times \partial_{\infty}\widetilde{X})\setminus\mathrm{diagonal}$ is identified with $(\hat{\R}\times\hat{\R})\setminus\mathrm{diagonal}$. 

The {\it angle distance} 
$d(x,y)$ for $x,y\in \hat{\R}=\R\cup\{\infty\}$ with respect to a reference point $z_0\in\HH$ is the smaller of the two angles between the geodesic rays starting at $z_0$ and ending at $x$ and $y$, respectively. The angle distance $d$ depends on the choice of the reference point $z_0$. The identity map of $\hat{\mathbb{R}}$ is bi-Lipschitz for any two angle distances given by different choices of the reference points.
The angle distance 
induces the product metric on $G(\widetilde{X})$ via the identification  with $(\hat{\R}\times\hat{\R})\setminus\mathrm{diagonal}$ called the {\it angle metric}. The identity map on $G(\widetilde{X})$ is bi-Lipschitz for any two angle metrics given by two different reference points. It is also possible to isometrically identify $\tilde{X}$ with the unit disk model $\mathbb{D}$ of the hyperbolic plane. In that case $G(\tilde{X})$ is identified with $(S^1\times S^1)\setminus\mathrm{diagonal}$.

A {\it geodesic current} for $X$ is a positive Radon measure on $G(\widetilde{X})=(\hat{\R}\times\hat{\R})\setminus\mathrm{diagonal}$ that is invariant under the action of the covering group $\Gamma$ and the change of the orientation of the geodesics (see \cite{Bonahon}). The {\it space of geodesic currents} for $X$ is denoted by $\mathcal{G}(X)$. The {\it Liouville measure} $L_{\widetilde{X}}$ for ${X}$ is the unique (up to scalar multiple) full support geodesic current that is invariant under the isometries of the universal covering $\widetilde{X}$ and the change of the orientation of the geodesics. More precisely, the Liouville measure of a Borel set $A\subset (\hat{\R}\times\hat{\R})\setminus\mathrm{diagonal}=G(\widetilde{X})$ is given by
 $$
 L_{\widetilde{X}}(A)=\iint_{A}\frac{dxdy}{(x-y)^2}.
 $$
 Given two disjoint intervals $[a,b]$ and $[c,d]$ of $\hat{\mathbb{R}}$, 
 the set of geodesics $A=[a,b]\times [c,d]\subset G(\widetilde{X})$  with one endpoint in $[a,b]\subset\hat{\mathbb{R}}$ and another endpoint in $[c,d]\subset\hat{\mathbb{R}}$ is called a {\it box of geodesics}. Then
 $$
 L_{\widetilde{X}}([a,b]\times [c,d])=\log cr(a,b,c,d).
 $$ 
 It follows from the definition that $L_{\widetilde{X}}([a,b]\times [c,d])=L_{\widetilde{X}}([c,d]\times [a,b])$.
 
A quasiconformal map $f:X\to X_1$ induces a quasisymmetric map of the ideal boundaries of the universal covers $\widetilde{X}$ and $\widetilde{X}_1$ of $X$ and $X_1$, respectively. The induced map in turn induces a homeomorphism of the space of geodesics $G(\widetilde{X})$ and $G(\widetilde{X}_1)$ which is equivariant for the actions of the covering groups. The Teichm\"uller equivalence class  $[f:X\to X_1]$ induces the pull-back of the Liouville measure $L_{\widetilde{X}_1}$ to the space of geodesics $G(\widetilde{X})$ denoted by $L_{[f]}$ under the induced homeomorphism of $G(\widetilde{X})$ and $G(\widetilde{X}_1)$. In general, $L_{[f]}$ is invariant under $\pi_1(X)$ but not invariant under all isometries of $\widetilde{X}$ and thus different from $L_{\widetilde{X}}$.
 In fact, $L_{[f]}$ completely recovers the Riemann surface $X_1$ and the Teichm\"uller class of $f:X\to X_1$.

 Bonahon \cite{Bonahon} introduced the {\it Liouville map} 
 $$
 \mathcal{L}:\T (X)\to \mathcal{G}(X)
 $$
 by
 $$
 \mathcal{L}([f])=L_{[f]}.
 $$
and he used the Liouville map in order to give an alternative description of the Thurston boundary to the Teichm\"uller space of a compact surface. When $X$ is a compact surface the space of geodesic currents $\mathcal{G}(X)$ is equipped with the standard weak* topology for which the Liouville map is an embedding onto its image (see Bonahon \cite{Bonahon}).
 Bonahon and the second author \cite{BonahonSaric} introduced the {\it uniform weak* topology} on the space of geodesic currents in order to introduce a Thurston boundary to Teichm\"uller spaces of arbitrary Riemann surfaces. This is a simplification of the topology that was introduced by the second author \cite{Saric}.

Let $H(\widetilde{X})$ be the space of all H\"older continuous functions $\xi: G(\widetilde{X})\to\C$ with respect to the product metric on $G(\widetilde{X})$ that are of compact support. 
 A linear functional $\W : H(\widetilde{X})\to\C$ is said to be {\it bounded} if, for every $\xi\in H(\widetilde{X})$,
$$
\| \W \|_{\xi}:=\sup_{\gamma\in PSL_2(\R )} |\W (\xi\circ\gamma )|<\infty .
$$ 
The space of {\it bounded H\"older distributions} $\HHH_{b}(X)$ is the space of all bounded complex linear functionals on the space of H\"older continuous functions $H(\widetilde{X})$ with compact support in $G(\widetilde{X})$.

Since the space of bounded geodesic currents is a subset of the space of bounded H\"older distributions $\HHH_{b}(X)$, we can consider the Liouville map
$$
\mathcal{L} :\T (X)\to \HHH_{b}(X).
$$
The Liouville map is a homeomorphisms onto its image in $\HHH_{b}(X)$ because continuous functions are well approximated by H\"older continuous functions (see \cite[Theorem 4.2.1]{Dong}).

The first two authors \cite{DongSaric} complexified the Liouville map and proved that the complexification is holomorphic. For a fixed $0<\lambda \leq 1$, let $\HHH_{b}^{\lambda}(X)$ be the space of complex linear functional on the space $H^{\lambda}(\widetilde{X})$ of $\lambda$-H\"older continuous functions with compact support that are bounded (for the semi-norms given by the $\lambda$-H\"older continuous functions with compact support). Given $\delta >0$, define $\mathcal{V}_{\delta}$ to be the set of all $[\mu ]\in\mathcal{QF}(X)$ with $\|\mu\|_{\infty}<\delta$. Then (see \cite[Theorem 7 and 8]{DongSaric})

\begin{thm}
\label{thm:complexification}
For a fixed $0<\lambda\leq 1$, there exists $\delta =\delta (\lambda )>0$ such that the Liouville map $\mathcal{L}:\T (X)\to \HHH_b(X)$ extends to a holomorphic map
$$
\hat{\mathcal{L}}:\mathcal{V}_{\delta}\to \HHH_b^{\lambda}(X).
$$
\end{thm}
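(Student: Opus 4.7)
The plan is to construct $\hat{\mathcal{L}}$ explicitly by complexifying the cross-ratio formula $L_{\widetilde{X}}([a,b]\times [c,d])=\log cr(a,b,c,d)$, and then verify holomorphicity term-by-term using the Ahlfors--Bers theorem. For $[\mu]\in\mathcal{V}_\delta$ let $f^\mu:\hat{\C}\to\hat{\C}$ be the associated normalized quasiconformal map. For an oriented box $B=[a,b]\times[c,d]\subset G(\widetilde{X})$ with $a,b,c,d\in\hat{\R}$, the four images $f^\mu(a),f^\mu(b),f^\mu(c),f^\mu(d)\in\hat{\C}$ lie on a quasicircle, so their cross-ratio is a complex number; provided $\delta$ is small, the quasiconformal distortion of the cross-ratio of four ordered points of $\hat{\R}$ stays in a bounded neighborhood of the positive real axis. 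Thus we may set
$$L_{[\mu]}(B):=\log cr(f^\mu(a),f^\mu(b),f^\mu(c),f^\mu(d))$$
using the principal branch. This reduces to the ordinary Liouville measure when $\mu$ represents a point of $\T(X)$, and it produces a $\Gamma$-equivariant complex-valued finitely additive box function on $G(\widetilde{X})$.

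Next, given $\xi\in H^\lambda(\widetilde{X})$ with compact support, I would define $\hat{\mathcal{L}}([\mu])(\xi)$ as the limit of Riemann-type sums
$$\sum_k \xi(g_k)\, L_{[\mu]}(B_k)$$
taken over finer and finer partitions $\{B_k\}$ of a box containing $\mathrm{supp}\,\xi$, with $g_k\in B_k$. Convergence, and independence from the choice of partition and sample points, would follow from a telescoping estimate: subdividing one box produces an error controlled by $|\xi(g)-\xi(g')|\leq C\,\mathrm{diam}(B)^\lambda$ times $|L_{[\mu]}(B)|$, and $|L_{[\mu]}(B)|$ is comparable to the Fuchsian $|L_{\widetilde{X}}(B)|$ by Mori-type quasiconformal distortion bounds on cross-ratios when $[\mu]\in\mathcal{V}_\delta$. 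The threshold $\delta(\lambda)$ must be chosen so that these errors, weighted by $\mathrm{diam}(B)^\lambda$, are summable. Boundedness $\|\hat{\mathcal{L}}([\mu])\|_\xi<\infty$ then follows because precomposing by $\gamma\in PSL_2(\R)$ transports the support by a hyperbolic isometry, and the bi-Lipschitz equivalence of angle metrics at different basepoints yields uniform control across all translates.

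For holomorphicity, the Ahlfors--Bers theorem gives that, for each fixed $z\in\hat{\C}$, the map $[\mu]\mapsto f^\mu(z)$ is holomorphic in the Bers chart on $\mathcal{QF}(X)$. Cross-ratios are rational in their four arguments, and $\log$ is holomorphic off the negative real axis, so $[\mu]\mapsto L_{[\mu]}(B)$ is holomorphic on $\mathcal{V}_\delta$ for each box $B$. Hence each finite Riemann sum $\sum_k\xi(g_k)L_{[\mu]}(B_k)$ is a holomorphic scalar function of $[\mu]$. Provided the partition-sum convergence is locally uniform on $\mathcal{V}_\delta$, the limit $\hat{\mathcal{L}}([\mu])(\xi)$ is holomorphic in $[\mu]$ by the scalar Weierstrass/Morera theorem, and the uniform boundedness of the semi-norms upgrades this to holomorphicity of the operator-valued map $\hat{\mathcal{L}}:\mathcal{V}_\delta\to\HHH_b^\lambda(X)$ (for instance, via weak holomorphy plus local boundedness).

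The hard part is making the partition-sum limit converge locally uniformly in $[\mu]\in\mathcal{V}_\delta$ while simultaneously controlling the family of semi-norms. The two competing quantities are the H\"older decay $\mathrm{diam}(B)^\lambda$ and the growth of $|L_{[\mu]}(B)|$ for boxes accumulating on the diagonal of $G(\widetilde{X})$; one needs $\delta$ small enough that the quasiconformal distortion of cross-ratios does not overwhelm the H\"older gain, and this is precisely where the dependence $\delta=\delta(\lambda)$ enters (smaller $\lambda$ forces smaller $\delta$). Once that quantitative cross-ratio estimate is in place, the extension, the boundedness, and the holomorphicity all fall out in parallel; the analytic heart of the argument is therefore the sharp cross-ratio distortion bound as a function of $\|\mu\|_\infty$ and the geometry of the partition box.
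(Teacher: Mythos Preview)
The paper does not actually prove this theorem; it is quoted from \cite[Theorems~7 and~8]{DongSaric} with the citation appearing immediately before the statement. So there is no in-paper proof to compare against directly.

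That said, your sketch matches the strategy of \cite{DongSaric} as one can reconstruct it from its use later in this paper. In the proof of Theorem~\ref{thm:derivative-limit} the authors invoke exactly the Riemann-type sums you describe,
\[
I_n=\sum_{s,t=1}^{2^n}\xi(a_s,c_t)\,\log cr\bigl(E^{\tau\widetilde{\mu}}(a_{s-1},a_s,c_{t-1},c_t)\bigr),
\]
together with the telescoping identity $\hat{\mathcal{L}}([E^{\tau\widetilde{\mu}}])(\xi)=I_{n_0}+\sum_{n\geq n_0}(I_{n+1}-I_n)$ and the fact (from \cite{DongSaric}) that the tail $\sum|I_{n+1}-I_n|$ is small uniformly in the parameter. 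This is precisely your partition-sum construction with the telescoping error estimate, and the dependence $\delta=\delta(\lambda)$ arises, as you say, from balancing the H\"older exponent against the quasiconformal cross-ratio distortion. Your identification of the ``hard part'' is accurate: the quantitative estimate showing $|I_{n+1}-I_n|$ decays geometrically, uniformly over $\mathcal{V}_\delta$, is the analytic core, and the holomorphicity then follows from Ahlfors--Bers plus uniform limits exactly as you outline.
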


\section{Earthquakes, quake-bends and tangent vectors}
\label{sec:earthquakes}

Earthquakes are geometrically natural deformations of conformally hyperbolic Riemann surfaces. Thurston first introduced earthquakes on compact hyperbolic surfaces as completions of sequences of real positive twists along longer and longer geodesics (see Kerckhoff \cite{Kerckhoff} and Thurston \cite{Thurston}). 
Later on, Thurston \cite{Thurston-pl} defined earthquakes on the hyperbolic plane and using lifts to the universal covering he extended the definition of earthquakes to any conformally hyperbolic Riemann surface.

A {\it geodesic lamination} on $X$ is a closed subset of $X$ with an assigned foliation by complete geodesics. When $X$ is of finite hyperbolic area any geodesic lamination of $X$ has zero area and its foliation is unique (see \cite{Thurston}). This is also true for infinite area hyperbolic surfaces whose covering group $\Gamma$ is of the first kind (see \cite[Proposition 3.1]{Saric-tt}). However, in general we need to include the foliation in the definition of a geodesic lamination. For example, the hyperbolic plane $\HH$ can be foliated by complete simple geodesics in infinitely many ways.

A {\it measured lamination} $\mu$ on $X$ is a geodesic lamination $|\mu |$ together with an assignment of a positive Radon measure to each geodesic arc $I$ transverse to $|\mu |$ such that the measure is supported on $I\cap |\mu |$ and the assignment is invariant under homotopies relative the leaves of $|\mu |$. Equivalently, we can define a measured lamination $\mu$ on $X$ to be a geodesic current $\widetilde{\mu}\in\mathcal{G}(X)$ for $X$ whose support $|\widetilde{\mu}|$ is a geodesic lamination on $\HH$ (which is necessarily invariant under $\pi_1(X)$). To be precise, we take geodesics of the support  $|\mu |$ with both orientations and make the measure invariant under the change of orientation. 

An earthquake map of $X$ is defined using a measured lamination $\mu$ on $X$. We define the earthquake map $E^{\widetilde{\mu}}:\HH\to\HH$ using the lift $\widetilde{\mu}$ of the measured lamination $\mu$ to the universal covering $\widetilde{X}=\HH$. It is necessarily true that the measured lamination $\widetilde{\mu}$ is $\Gamma$ invariant and is supported on complete geodesics of $\HH$.

We first define a {\it simple} earthquake $E^\delta_g$ supported on a single geodesic $g\in G(\widetilde{X})=G(\HH )$ corresponding to a measured lamination $\delta\mathbf{1}_g+\delta\mathbf{1}_{\widetilde{g}}$, where $\delta >0$, $g$ and $\widetilde{g}$ have opposite orientations, and $\mathbf{1}_g$ is a Dirac measure on $G(\widetilde{X})$ with support $g$-i.e., $\mathbf{1}_g(h)=0$ for $h\neq g$ and $\mathbf{1}_g(g)=1$. Assign an orientation to $g$. The oriented geodesic $g$ divides $\HH$ into the left and right geodesic half-planes.  Define $E^\delta_g:\HH \to\HH$ to be
the identity on the left geodesic half-plane and to be the hyperbolic translation with the oriented axis $g$ and the translation length $\delta$ on the right geodesic half-plane.

To define $E^\delta_g:\T (X)\to\T (X)$, we set $E^\delta_g([id]):=E^\delta_g$. For a quasisymmetric boundary map $\widetilde{f}:\hat{\mathbb{R}}\to\hat{\mathbb{R}}$ corresponding to $[f]\in\T (X)$, we set $E^\delta_g([\widetilde{f}]):=E_{\widetilde{f}(g)}^\delta$ where $\widetilde{f}(g)$ is the geodesic of $\HH$ whose endpoints are the image of endpoints of $g$ under $\widetilde{f}$. This defines a simple earthquake on the whole Teichm\"uller space $\T (X)$ (for example, see \cite{BonahonSaric}).

Let $\{ g_1,\widetilde{g}_1,\ldots ,g_n,\widetilde{g}_n\}$ be a finite set of pairwise disjoint geodesics in $\HH$ where $g_i$ and $\widetilde{g}_i$ have opposite orientations. Let $\{ \delta_1,\ldots ,\delta_n\}$ be positive real numbers. Consider a measured lamination $\sigma =\sum_{i=1}^n\delta_i(\mathbf{1 }_{g_i}+\mathbf{1}_{\widetilde{g}_i})$ and define an {\it elementary earthquake} with measured lamination $\sigma$  
$$
E^{\sigma}:\T (X)\to\T (X)
$$
by (see \cite{BonahonSaric})
$$
[\widetilde{f}]\mapsto E^{\delta_1}_{g_1}\circ E^{\delta_2}_{g_2}\circ\cdots \circ E^{\delta_n}_{g_n}([\widetilde{f}]).
$$
We note that the order of the terms $E^{\delta_i}_{g_i}$ in the above definition is irrelevant due to the fact that we defined $E^{\delta}_g([\widetilde{f}]):=E_{\widetilde{f}(g)}^\delta$ (see \cite{BonahonSaric}). 
Since the support of $\sigma$ is finite, the elementary earthquake extends to a homeomorphism of $\hat{\mathbb{R}}$.

Let $\widetilde{\mu}$ be a measured lamination on $\HH$ that is the lift of a measured lamination $\mu$ on $X$. A {\it stratum} 
of the geodesic lamination $|\widetilde{\mu}|$ is either a geodesic of $|\widetilde{\mu}|$ or a connected component of its complement. 
The upper half-plane $\HH$ is partitioned by strata of $|\widetilde{\mu}|$ and we define the earthquake map $E^{\widetilde{\mu}}:\HH\to\HH$ by assigning a hyperbolic isometry on each stratum. The measured lamination $\widetilde{\mu}$ can be obtained as a limit in the weak* topology of a sequence of finitely supported measured laminations $\delta_n$. For example, one can choose finitely many geodesics of the support of $\widetilde{\mu}$ and assign to each of them a positive weight of all nearby geodesics (see \cite{Thurston-pl}, or \cite{MiyachiSaric}). 
Then, on each stratum of $|\widetilde{\mu} |$, the limit of elementary earthquakes $E^{\delta_n}$ as $n\to\infty$ exists and the earthquake $E^{\widetilde{\mu}}$ restricted to the stratum is defined to be the limit (see \cite{Thurston-pl} and \cite{EpsteinMarden}). 

It turns out that $E^{\widetilde{\mu}}$ does not always extend to a homeomorphism of the ideal boundary $\hat{\mathbb{R}}$ (see \cite{Thurston-pl}, \cite{GardinerHuLakic}). Thurston \cite{Thurston-pl} showed that $E^{\widetilde{\mu}}$ is, up to post-composition by an isometry, uniquely determined by $\widetilde{\mu}$. More importantly,  Thurston \cite{Thurston-pl} proved that every orientation preserving homeomorphism of the ideal boundary $\hat{\mathbb{R}}$ can be obtained by the continuous extension of an earthquake map $E^{\widetilde{\mu}}:\HH\to\HH$. It is an open problem to classify measured laminations whose earthquakes give rise to homeomorphisms of $\hat{\mathbb{R}}$. 

For the considerations involving $\T (X)$, it is important to classify which measured laminations give rise to earthquakes whose continuous extensions to $\hat{\mathbb{R}}$ are quasisymmetric maps. The {\it Thurston norm} of a measured lamination $\widetilde{\mu}$ on $\HH$ is given by
$$
\|\widetilde{\mu}\|_{Th}:=\sup_I\widetilde{\mu}(I)
$$
where the supremum is over all geodesic arcs $I$ of length $1$ that are transverse to $\widetilde{\mu}$. The quantity $\widetilde{\mu}(I)$ is the $\widetilde{\mu}$-measure of the geodesics in $\HH$ that intersect $I$. The second author \cite{Saric}, Gardiner-Hu-Lakic \cite{GardinerHuLakic} and Epstein-Marden-Markovic \cite{EMM}  gave different proofs of the fact that $E^{\widetilde{\mu}}$ extends to a quasisymmetric map of $\hat{\mathbb{R}}$ if and only if $\|\widetilde{\mu}\|_{Th}<\infty$. We note that $\|\widetilde{\mu}\|_{Th}<\infty$ is equivalent to $\widetilde{\mu}\in\mathcal{H}_b(\HH )$. 

When $t>0$ and $\|\widetilde{\mu}\|_{Th}<\infty$, the measured lamination $t\widetilde{\mu}$ has finite Thurston norm and $t\mapsto E^{t\widetilde{\mu}}([id])$ is a path in $\T (X)$ through the basepoint $[id]$, called an {\it earthquake path}. The second author \cite{Saric} proved that there exists a neighborhood $V(\|\widetilde{\mu}\|_{Th})$ of the real axis $\mathbb{R}$ in the complex plane $\mathbb{C}$ such that the real earthquake path $t\mapsto E^{t\widetilde{\mu}}$ extends to a {\it complex earthquake}
$$
\tau\mapsto E^{\tau\widetilde{\mu}}([id])
$$
that is a holomorphic map from $V(\|\widetilde{\mu}\|_{Th})$ into $\mathcal{QF}(X)$. This implies that the real earthquake path $t\mapsto E^{t\widetilde{\mu}}$ is a real analytic path in the Teichm\"uller space $\T (X)$.

\section{The derivatives of all orders of the Liouville distributions along quake-bend paths}

Let $\widetilde{\mu}$ be the lift to $\widetilde{X}$ of a bounded measured lamination $\mu$ on $X$. 
Then $\widetilde{\mu}$ is a bounded geodesic current for $X$.
Let $\widetilde{\mu}_n^d$ be a sequence of measured laminations on $\widetilde{X}$ with discrete support that converges to $\widetilde{\mu}$ in the uniform weak* topology on $\mathcal{G}(\widetilde{X})$ as constructed in \cite{MiyachiSaric}. The support of each $\widetilde{\mu}_n^d$ is a subset of the support of $\widetilde{\mu}$ and the Thurston norm $\|\widetilde{\mu}_n^d\|_{Th}$ of the sequence $\widetilde{\mu}_n^d$ is bounded above by a constant that depends only on $\|\widetilde{\mu}\|_{Th}$. 

Let $\{K_j\}_{j=1}^{\infty}$ be an exhaustion of $G(\widetilde{X})$ by compact sets such that 
$\widetilde{\mu} (\partial K_j) = 0$
for all $j$. For example, we can take $K_j$ to be the set of all geodesics of $\widetilde{X}\equiv\mathbb{D}$ that intersect a Euclidean disk of radius $r_j$ centered at the origin with $r_j\to 1$ as $j\to\infty$. The boundary $\partial K_j$ consists of all geodesics that intersect the circle of radius $r_j$ centered at the origin. Since we have uncountably many circles centered at the origin, there is a choice of $r_j$ such that $\widetilde{\mu}(\partial K_j)=0$.

We define $\widetilde{\mu}_{n,j}$ to be the restriction of $\widetilde{\mu}_n^d$ to $K_j$. Then $\widetilde{\mu}_{n,j}$ are measured laminations with finite support such that $\widetilde{\mu}_{n,j}\to
\widetilde{\mu}|_{K_j}$ in the weak* topology as $n\to\infty$ for each $j$. To see this, fix $\epsilon >0$. There  exists $\epsilon_j>0$ small enough, such that the $\widetilde{\mu}$-measure of the set of geodesics $E_j$ intersecting $\{ r_j-\epsilon_j\leq |z|\leq r_j+\epsilon_j\}$ is less than $\epsilon$ and that $\widetilde{\mu}_{n}^d(E_j)\to \widetilde{\mu}(E_j)<\epsilon$ (see \cite[Lemma 6]{BonahonSaric}). Let $\xi :G(\tilde{X})\to\mathbb{C}$ be an arbitrary continuous function with a compact support. Let $\xi_0 :G(\tilde{X})\to [0,1]$ be a continuous function which is constantly equal to $1$ on the set of geodesics $E_j$ that intersect $\{ |z|\leq r_j\}$ and that is equal to zero on the set of geodesics that do not intersect $\{ |z|<r_j+\epsilon_j\}$. Then we have
$$
|\int_{G(\tilde{X})}\xi d(\widetilde{\mu}_{n,j}-\widetilde{\mu}|_{K_j})|\leq |\int_{G(\tilde{X})}\xi_0\xi d(\widetilde{\mu}_{n}^d-\widetilde{\mu})|+3\epsilon
$$
for $n$ large enough. By letting $n\to\infty$ and by the convergence of $\widetilde{\mu}_n^d$ to $\widetilde{\mu}$, we conclude that
$$
\lim_{n\to\infty} |\int_{G(\tilde{X})}\xi d(\widetilde{\mu}_{n,j}-\widetilde{\mu}|_{K_j})|\leq 3\epsilon .
$$
Since $\epsilon$ was arbitrary, we conclude that $\widetilde{\mu}_{n,j}$ converge to $\widetilde{\mu}|_{K_j}$ in the weak* topology as $n\to\infty$.

By the main result in \cite{Saric2}, there exists $\delta_1=\delta_1(\|\mu\|_{\infty}) >0$ such that the complex earthquakes (or quake-bends) $E^{\tau \widetilde{\mu}}$, $E^{\tau\widetilde{\mu}|_{K_j}}$ and $E^{\tau \widetilde{\mu}_{n,j}}$ are well-defined for all $\tau\in\mathbb{C}$ with $|\tau |<\delta_1$, and they induces  holomorphic maps from $\{ |\tau |<\delta_1\}$ into the Quasi-Fuchsian space $\mathcal{QF}(\HH )\supset\mathcal{QF}(X)$. 

Given $0<\lambda\leq 1$, by \cite[Theorem 7]{DongSaric} there is $\delta_2(\lambda )>0$ such that the Liouville map
$$
\mathcal{L}:\mathcal{T} (X)\to \mathcal{G}_b(X)
$$
extends to a holomorphic map
$$
\hat{\mathcal{L}}:\mathcal{V}_{\delta_2}\to \mathcal{H}_b^{\lambda}(X)
$$
where $\mathcal{V}_{\delta_2}=\{ [\sigma ]:\sigma\in L^{\infty}(\mathbb{C})\ \mathrm{and}\ 
 \|\widetilde{\sigma}\|_{\infty}<\delta_2\}$ is an open neighborhood in $\mathcal{QF}(X)$ of the base point of the Teichm\"uller space $\T (X)$. For some $\delta >0$ small enough, we have that $[E^{\tau\widetilde{\mu}}]$, $[E^{\tau\widetilde{\mu}|_{K_j}}]$ and $[E^{\tau\widetilde{\mu}_n,j}]$ are in $\mathcal{V}_{\delta_2}$. In particular, for any $\xi\in H^{\lambda}(\widetilde{X})$ we have that
$$
\tau\mapsto \hat{\mathcal{L}}([E^{\tau\widetilde{\mu}}])(\xi ),
$$

$$
\tau\mapsto \hat{\mathcal{L}}([E^{\tau\widetilde{\mu}|_{K_j}}])(\xi ) 
$$
and
$$
\tau\mapsto \hat{\mathcal{L}}([E^{\tau\widetilde{\mu}_{n,j}}])(\xi )
$$
are holomorphic maps from $\{ |\tau |<\delta\}$ into the complex plane $\mathbb{C}$. By changing the basepoint, similar statements hold for all points of $\mathcal{T}(X)$. 

 By the construction of the quake-bends (and earthquakes) in \cite{EpsteinMarden}, we have that $E^{\tau\widetilde{\mu}_{n,j}}(x)\to E^{\tau\widetilde{\mu}|_{K_j}}(x)$ as $n\to\infty$ for each $x\in\mathbb{R}$ and the convergence is uniform for $x$ in a compact subset of $\mathbb{R}$ and all $|\tau |<\delta$.   It follows that, for each $\tau$ with $|\tau |<\delta$, 
$$
\hat{\mathcal{L}}([E^{\tau\widetilde{\mu}_{n,j}}])(\xi )\to \hat{\mathcal{L}}([E^{\tau\widetilde{\mu}|_{K_j}}])(\xi )
$$
as $n\to\infty$. Since the functions $\tau\mapsto \hat{\mathcal{L}}([E^{\tau\widetilde{\mu}_{n,j}}])(\xi )$ and $\tau\mapsto \hat{\mathcal{L}}([E^{\tau\widetilde{\mu}|_{K_j}}])(\xi )$ are holomorphic we get
$$
\frac{d^k}{d\tau^k}\hat{\mathcal{L}}([E^{\tau\widetilde{\mu}_{n,j}}])(\xi )\to \frac{d^k}{d\tau^k}\hat{\mathcal{L}}([E^{\tau\widetilde{\mu}|_{K_j}}])(\xi )
$$
as $n\to\infty$ for all $k\geq 1$ and all $j\geq 1$.

We establish a formula for the computation of the derivatives $\frac{d^k}{d\tau^k} \hat{\mathcal{L}}([E^{\tau\widetilde{\mu}}])(\xi )$ of all orders using the approximation of $\widetilde{\mu}$ with $\widetilde{\mu}_{n,j}$.  

\begin{thm}
\label{thm:derivative-limit}
Let $\mu$ be a bounded measured lamination on $X$ and $\widetilde{\mu}$ its lift to the universal covering $\widetilde{X}$. Let $0<\lambda\leq 1$ be fixed and $\delta >0$ be chosen as above depending on $\lambda$. Then, for the sequence $\{\widetilde{\mu}_{n,j}\}_{n,j=1}^{\infty}$ defined above, for any $\xi\in H^{\lambda}(\widetilde{X})$ and for any $k\geq 1$ we have
$$
\frac{d^k}{d\tau^k}\hat{\mathcal{L}}([E^{\tau\widetilde{\mu}}])(\xi )=
\lim_{j\to\infty}\Big{[}\lim_{n\to\infty} 
\frac{d^k}{d\tau^k}\hat{\mathcal{L}}([E^{\tau\widetilde{\mu}_{n,j}}])(\xi )\Big{]},
$$
for all $\tau\in\mathbb{C}$ with $|\tau |<\delta$.
\end{thm}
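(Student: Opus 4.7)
The plan is to exploit the holomorphicity of the three families $\tau\mapsto \hat{\mathcal{L}}([E^{\tau\sigma}])(\xi)$ (for $\sigma\in\{\widetilde{\mu}_{n,j},\widetilde{\mu}|_{K_j},\widetilde{\mu}\}$) on $\{|\tau|<\delta\}$, and to pass derivatives through both limits by two applications of Vitali's convergence theorem. The inner limit in $n$ is essentially already handled in the paragraphs preceding the theorem: the Epstein--Marden style convergence $E^{\tau\widetilde{\mu}_{n,j}}(x)\to E^{\tau\widetilde{\mu}|_{K_j}}(x)$, uniform for $x$ in compact subsets of $\mathbb{R}$ and $|\tau|<\delta$, gives pointwise convergence $\hat{\mathcal{L}}([E^{\tau\widetilde{\mu}_{n,j}}])(\xi)\to \hat{\mathcal{L}}([E^{\tau\widetilde{\mu}|_{K_j}}])(\xi)$, and holomorphicity then upgrades this to convergence of derivatives of all orders. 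It therefore suffices to prove the outer limit
$$\lim_{j\to\infty}\frac{d^k}{d\tau^k}\hat{\mathcal{L}}([E^{\tau\widetilde{\mu}|_{K_j}}])(\xi) = \frac{d^k}{d\tau^k}\hat{\mathcal{L}}([E^{\tau\widetilde{\mu}}])(\xi).$$

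I would mirror the argument used for the inner limit. Because $\widetilde{\mu}(\partial K_j)=0$ and $K_j$ exhaust $G(\widetilde{X})$, the restrictions $\widetilde{\mu}|_{K_j}$ converge to $\widetilde{\mu}$ in the weak* topology, and their Thurston norms are uniformly dominated by $\|\widetilde{\mu}\|_{Th}$. Feeding this into the Epstein--Marden construction of quake-bends should yield $E^{\tau\widetilde{\mu}|_{K_j}}(x)\to E^{\tau\widetilde{\mu}}(x)$, uniformly on compact sets in $x$ and on $\{|\tau|<\delta\}$. Since $\xi$ has compact support in $G(\widetilde{X})$, only a bounded set of geodesics participates in the evaluation $\hat{\mathcal{L}}([E^{\tau\sigma}])(\xi)$, so uniform convergence of the boundary correspondences on compact sets transfers to pointwise convergence $\hat{\mathcal{L}}([E^{\tau\widetilde{\mu}|_{K_j}}])(\xi)\to \hat{\mathcal{L}}([E^{\tau\widetilde{\mu}}])(\xi)$ for every $\tau$ with $|\tau|<\delta$.

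To pass from pointwise convergence to convergence of all derivatives, I would invoke Vitali's theorem on the disk $\{|\tau|<\delta\}$. The required local uniform bound on $\{\tau\mapsto \hat{\mathcal{L}}([E^{\tau\widetilde{\mu}|_{K_j}}])(\xi)\}_j$ follows because the Beltrami coefficients of $E^{\tau\widetilde{\mu}|_{K_j}}$ all lie in $\mathcal{V}_{\delta_2}$ with $L^\infty$ norms controlled by $|\tau|\cdot\|\widetilde{\mu}\|_{Th}$, and the holomorphic map $\hat{\mathcal{L}}:\mathcal{V}_{\delta_2}\to \mathcal{H}_b^\lambda(X)$ is locally bounded; evaluating at the fixed $\xi\in H^\lambda(\widetilde{X})$ then supplies a uniform numerical bound on any sub-disk $\{|\tau|\leq \delta'<\delta\}$. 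Vitali's theorem then promotes the pointwise convergence to locally uniform convergence, which by Cauchy's integral formula forces convergence of every derivative $\frac{d^k}{d\tau^k}$, completing the outer limit.

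The main obstacle is the first step of the outer limit: verifying rigorously that $E^{\tau\widetilde{\mu}|_{K_j}}\to E^{\tau\widetilde{\mu}}$ uniformly on compacta jointly in $x$ and $\tau$. Unlike the approximation $\widetilde{\mu}_{n,j}\to\widetilde{\mu}|_{K_j}$ (which is a discrete approximation within a compact set where the Epstein--Marden formalism applies verbatim), here the ``tail'' $\widetilde{\mu}|_{G(\widetilde{X})\setminus K_j}$ is a full, possibly non-discrete measured sublamination whose total contribution to the quake-bend on compact subsets must be shown to vanish as $j\to\infty$. This reduces to an estimate on how many leaves of $|\widetilde{\mu}|\setminus K_j$ can meet a fixed compact subset of $\mathbb{H}$; since any such leaf must eventually exit every compact set, the contributions of the tail to both the real translations and the imaginary bends decay, and together with the uniform Thurston bound this should suffice to close the argument.
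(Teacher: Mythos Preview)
Your overall architecture matches the paper's: handle the inner limit in $n$ via the Epstein--Marden convergence and holomorphicity, then reduce the outer limit in $j$ to the pointwise statement
\[
\hat{\mathcal{L}}([E^{\tau\widetilde{\mu}|_{K_j}}])(\xi)\to \hat{\mathcal{L}}([E^{\tau\widetilde{\mu}}])(\xi),
\]
and upgrade via holomorphicity. But you have misplaced the genuine difficulty. The convergence $E^{\tau\widetilde{\mu}|_{K_j}}\to E^{\tau\widetilde{\mu}}$ uniformly on compacta, which you flag as the ``main obstacle,'' is dispatched in the paper by a citation to \cite{Saric2}; the construction of the holomorphic motion there already gives this. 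The step you pass over in one sentence --- ``uniform convergence of the boundary correspondences on compact sets transfers to pointwise convergence $\hat{\mathcal{L}}([E^{\tau\widetilde{\mu}|_{K_j}}])(\xi)\to \hat{\mathcal{L}}([E^{\tau\widetilde{\mu}}])(\xi)$'' --- is where the actual work lies.

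The issue is that for complex $\tau$ the object $\hat{\mathcal{L}}([E^{\tau\sigma}])$ is not a measure, so $\hat{\mathcal{L}}([E^{\tau\sigma}])(\xi)$ is not an integral $\int\xi\, dL_{[E^{\tau\sigma}]}$ to which you could apply dominated convergence. It is a H\"older distribution defined in \cite{DongSaric} by the telescoping series
\[
\hat{\mathcal{L}}([E^{\tau\sigma}])(\xi)=I_{n_0}+\sum_{n\geq n_0}(I_{n+1}-I_n),\qquad I_n=\sum_{s,t}\xi(a_s,c_t)\log cr\big(E^{\tau\sigma}(a_{s-1},a_s,c_{t-1},c_t)\big),
\]
built from a dyadic subdivision of a box containing the support of $\xi$. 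Pointwise convergence of $E^{\tau\widetilde{\mu}|_{K_j}}$ only gives $I_{n_0}^j\to I_{n_0}$ for each fixed $n_0$; to conclude you also need that the tails $\sum_{n\geq n_0}|I^j_{n+1}-I^j_n|$ are small \emph{uniformly in $j$}. The paper invokes the estimates of \cite{DongSaric} for exactly this uniform tail bound (which ultimately rests on the uniform H\"older/quasisymmetry control coming from $\|\widetilde{\mu}|_{K_j}\|_{Th}\leq\|\widetilde{\mu}\|_{Th}$). Without that argument, your inference from boundary-map convergence to convergence of $\hat{\mathcal{L}}$-values is unjustified. Your Vitali/local-boundedness wrapper is fine in spirit, but it only kicks in once this pointwise convergence is in hand.
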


\begin{proof}
Since $\widetilde{\mu}_{n,j}\to\widetilde{\mu}|_{K_j}$ in the weak* topology, it follows that 
$$
\hat{\mathcal{L}}([E^{\tau\widetilde{\mu}_{n,j}}])(\xi )\to \hat{\mathcal{L}}([E^{\tau\widetilde{\mu}|_{K_j}}])(\xi )
$$
as $n\to \infty$ for all $j$ because $E^{\tau\widetilde{\mu}_{n,j}}|_{\mathbb{R}}\to E^{\tau\widetilde{\mu}|_{K_j}}|_{\mathbb{R}}$ and the definition of $\hat{\mathcal{L}}$.

The above functions are holomorphic in $\tau$ which implies that 
$$
\frac{d^k}{d\tau^k}\hat{\mathcal{L}}([E^{\tau\widetilde{\mu}_{n,j}}])(\xi )\to \frac{d^k}{d\tau^k}\hat{\mathcal{L}}([E^{\tau\widetilde{\mu}|_{K_j}}])(\xi )
$$
as $n\to \infty$ for all $j$ and $k$.

To finish the proof it remains to prove that $\frac{d^k}{d\tau^k}\hat{\mathcal{L}}([E^{\tau\widetilde{\mu}|_{K_j}}])(\xi )\to \frac{d^k}{d\tau^k}\hat{\mathcal{L}}([E^{\tau\widetilde{\mu}}])(\xi )$ as $j\to\infty$. Since $\hat{\mathcal{L}}$ is holomorphic, it is enough to prove that
\begin{equation}
\label{eq:conv-compact-general}
\lim_{j\to\infty}\hat{\mathcal{L}}([E^{\tau\widetilde{\mu}|_{K_j}}])(\xi )=\hat{\mathcal{L}}([E^{\tau\widetilde{\mu}}])(\xi )
\end{equation}
for all $|\tau |<\delta$. 

Using the partition of unity, it is enough to prove (\ref{eq:conv-compact-general}) under the assumption that the support of $\xi$ is in a box of geodesics $[a,b]\times [c,d]$, where $[a,b],[c,d]\subset\mathbb{R}$ with $[a,b]\cap [c,d]=\emptyset$. The angle metric on $[a,b]\times [c,d]$ is biLipschitz to the Euclidean metric. By the construction of the holomorphic motion $E^{\tau\widetilde{\mu}}$ restricted to the real line $\mathbb{R}$ in \cite{Saric2}, it follows that $E^{\tau\widetilde{\mu}|_{K_j}}$ converges to $E^{\tau\widetilde{\mu}}$ as $j\to\infty$ uniformly on the compact subsets of $\mathbb{R}$ for the Euclidean metric. 

Divide the box of geodesics $[a,b]\times [c,d]$ into $4^n$ sub-boxes $\{[a_{s-1},a_s]\times [c_{t-1},c_t]\}_{s,t=1}^{2^n}$ with disjoint interiors whose Liouville measures $L_{\widetilde{X}}([a_{s-1},a_s]\times [c_{t-1},c_t])$ are of the order $4^{-n}$ (see \cite{DongSaric}).  Let
$$
I_n:=\sum_{s,t=1}^{2^n}\xi (a_s,c_t) \log cr(E^{\tau\widetilde{\mu}}(a_{s-1},a_s,c_{t-1},c_t)),
$$
$$
I_n^j:=\sum_{s,t=1}^{2^n}\xi (a_s,c_t) \log cr(E^{\tau\widetilde{\mu}|_{K_j}}(a_{s-1},a_s,c_{t-1},c_t))
$$
and recall that (see \cite[Lemma 6]{DongSaric})
$$
\hat{\mathcal{L}}([E^{\tau\widetilde{\mu}}])(\xi )=I_{n_0}+\sum_{n=n_0}^{\infty} (I_{n+1}-I_n),
$$
$$
\hat{\mathcal{L}}([E^{\tau\widetilde{\mu}|_{K_j}}])(\xi )=I_{n_0}^j+\sum_{n=n_0}^{\infty} (I_{n+1}^j-I_n^j). 
$$
By \cite{DongSaric}, the sums $\sum_{n=n_0}^{\infty} |I_{n+1}-I_n|$ and $\sum_{n=n_0}^{\infty} |I_{n+1}^j-I_n^j|$ are arbitrary small for $n_0$ sufficiently large and for all $j$. The pointwise convergence of $E^{\tau\widetilde{\mu}|_{K_j}}$ to $E^{\tau\widetilde{\mu}}$ implies that $I_{n_0}^j\to I_{n_0}$. We conclude that (\ref{eq:conv-compact-general}) holds and the proof is finished.
\end{proof}

\section{A geometric formula for the first derivative of the Liouville measure}
In this section, we use Theorem \ref{thm:derivative-limit} to derive a geometric formula for the first derivative of the Liouville distributions along the earthquake and quake-bend paths. 

Let $\widetilde{\mu}_n=\sum_{i=1}^{s_n} \delta_i (\mathbf{1}_{g_i}+\mathbf{1}_{\widetilde{g}_i})$, where $\mathbf{1}_{g_i}$ is the Dirac measure on $G(\tilde{X})$ with support $\{ g_i\}$. Define
$$
h_{i,k}(\tau )=\frac{d^k}{d\tau^k}\hat{\mathcal{L} }([E^{\tau\delta_i}_{g_i}])(\xi )
$$
and note that
$$
\frac{d^k}{d\tau^k}\hat{\mathcal{L}}([E^{\tau\widetilde{\mu}_n}])(\xi )=\sum_{i=1}^{s_n}h_{i,k}(\tau ).
$$

Denote by $V_i=\frac{d}{dt}E^{t}_{g_i}|_{t=0}$ the tangent vector to the simple earthquake path $t\mapsto E^{t}_{g_i}$ at the point $t=0$. Then we have (for example, see \cite{MiyachiSaric})
$$
\frac{d}{dt}E^{t\widetilde{\mu}_n}\Big{|}_{t=0} =\sum_{i=1}^{s_n} \delta_iV_i . 
$$

We give a formula for the derivatives along a simple earthquake $t\mapsto E^t_g$ of the Liouville map evaluated at $\xi\in H(\widetilde{X})$. 

In order to find a geometric formula for the first derivative $\frac{d}{dt}{\mathcal{L}}([E^{t\widetilde{\mu}}])(\xi )$ along an earthquake $t\mapsto E^{t\widetilde{\mu}}$ for $t\in\mathbb{R}$, we first need 
the derivative along a simple earthquake path $t\mapsto E^{ta}_g$, where $a >0$ and $g\in G(\widetilde{X})$. For $h\in G(\widetilde{X})$, we define $\cos(g,h)$ to be the cosine of the angle from $g$ to $h$ if they intersect and to be zero if $h$ and $g$ do not intersect. If $g$ is the positive $y$-axis and $h$ is the geodesic from $x$ to $y$, then $\cos(g,h)=\frac{-x-y}{x-y}$. The following lemma was established by Bonahon and S\"ozen \cite{BonahonSozen}.

\begin{lem}[see \cite{BonahonSozen}]
\label{lem:simple-derivative}
Let $\xi\in H(\widetilde{X})$ and $g\in G(\widetilde{X})$ be a fixed geodesic. Then, for $t\in\mathbb{R}$ and $\omega >0$,
$$
\frac{d}{dt}\mathcal{L}([E^{t{\omega}}_g])(\xi )\Big{|}_{t=0}=\omega\int_{G(\widetilde{X})}\xi (h)\cos(g,h)dL_{\widetilde{X}}(h).
$$
\end{lem}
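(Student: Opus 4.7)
The plan is to compute the derivative at $t=0$ box-by-box, using the Riemann-sum expansion of $\mathcal{L}([f])(\xi)$ from \cite{DongSaric}. After conjugating by an isometry of $\HH$ we may take $g$ to be the $y$-axis oriented upward, so that $\Phi_t:=E^{t\omega}_g|_{\hat{\R}}$ is piecewise M\"obius: the identity on one arc of $\hat{\R}\setminus\{0,\infty\}$ and a hyperbolic translation $x\mapsto e^{\pm t\omega}x$ on the other. A partition of unity reduces the proof to the case where $\xi$ is supported in a single box of geodesics $B_0=[\alpha,\beta]\times[\gamma,\delta]$. If both $[\alpha,\beta]$ and $[\gamma,\delta]$ lie in the same arc of $\hat{\R}\setminus\{0,\infty\}$, then no geodesic from $B_0$ meets $g$, so $\cos(g,h)=0$ on $B_0$ by definition; moreover $\Phi_t$ acts globally as a single M\"obius transformation on $[\alpha,\beta]\cup[\gamma,\delta]$, and M\"obius-invariance of the cross-ratio makes $L_{[E^{t\omega}_g]}(B')$ constant in $t$ for every sub-box $B'\subset B_0$. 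Both sides of the identity vanish in that case.

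When $[\alpha,\beta]$ and $[\gamma,\delta]$ lie on opposite sides of $\{0,\infty\}$, we subdivide $B_0$ dyadically as in \cite{DongSaric} and write
$\mathcal{L}([E^{t\omega}_g])(\xi)=I_{n_0}(t)+\sum_{n\geq n_0}\bigl(I_{n+1}(t)-I_n(t)\bigr)$,
where $I_n(t)$ is the Riemann sum of $\xi$ evaluated at a chosen corner of each sub-box, weighted by the $\log$-cross-ratio of the image of that sub-box under $\Phi_t$. The tail estimates of \cite{DongSaric} depend only on the sub-box geometry and on the bi-Lipschitz constants of $\Phi_t$ restricted to each arc, so they are uniform in $t$ for $|t|$ small. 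This justifies differentiating the series term by term at $t=0$.

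The key computation is that for one sub-box, with endpoints $a_{p-1},a_p$ on one side of $\{0,\infty\}$ and $c_{q-1},c_q$ on the other, exactly one pair is moved by $\Phi_t$, so the derivative of
$$\log\mathrm{cr}\bigl(\Phi_t(a_{p-1}),\Phi_t(a_p),\Phi_t(c_{q-1}),\Phi_t(c_q)\bigr)$$
at $t=0$ is a sum of four explicit logarithmic derivatives. A direct Taylor expansion in the sub-box side length $\epsilon$ gives this as $\omega\cos(g,h_{p,q})\cdot L_{\widetilde{X}}([a_{p-1},a_p]\times[c_{q-1},c_q])+O(\epsilon^{3})$, with $h_{p,q}$ a corner geodesic; the factor $\cos(g,h)=(-x-y)/(x-y)$ emerges as the leading coefficient of the resulting rational expression. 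H\"older continuity of $\xi$ controls the replacement of $\xi(h_{p,q})$ by $\xi(h)$ inside each sub-box with acceptable error, and the limit of the Riemann sums is $\omega\int_{G(\widetilde{X})}\xi(h)\cos(g,h)\,dL_{\widetilde{X}}(h)$. The main obstacle is the uniform-in-$t$ decay of the telescoping tails required to exchange the limit in $n$ with the derivative in $t$; this is handled by the uniformity observation in the previous paragraph.
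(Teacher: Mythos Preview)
Your approach via the dyadic Riemann-sum machinery of \cite{DongSaric} is quite different from the paper's, which is far more direct. In the paper (see the proof of Lemma~\ref{lem:simple-complex-derivative}, to which this lemma defers), one simply writes the real Liouville current as an honest integral against the density $\frac{dxdy}{(x-y)^2}$, pulls back by the earthquake to get $\iint \xi(h(e^{-t\omega}x,y))\,\frac{dxdy}{(x-y)^2}$, performs the change of variable $x\mapsto e^{t\omega}x$, and differentiates under the integral sign; the factor $\frac{-xe^{t\omega}-y}{xe^{t\omega}-y}$ drops out immediately and at $t=0$ is exactly $\cos(g,h)$. No subdivision, no telescoping series, no limit interchange is needed, because for real $t$ the object $L_{[E^{t\omega}_g]}$ is a genuine measure. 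Your dyadic expansion is the tool designed for the \emph{complex} extension $\hat{\mathcal{L}}$, where one no longer has a measure to integrate against; importing it here is correct in spirit but heavy-handed.

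There is also a genuine gap in your justification. You write that the tail estimates of \cite{DongSaric} are ``uniform in $t$ for $|t|$ small'' and that ``this justifies differentiating the series term by term at $t=0$.'' Uniform-in-$t$ convergence of $\sum_n (I_{n+1}(t)-I_n(t))$ does \emph{not} by itself allow term-by-term differentiation; you need uniform convergence of the derivative series $\sum_n \frac{d}{dt}(I_{n+1}(t)-I_n(t))$. This is fixable---either by directly estimating the $t$-derivatives of the $\log$-cross-ratios (which are explicit rational functions in the endpoints and admit the same kind of H\"older/geometric bounds), or more cheaply by invoking the holomorphic extension of $\tau\mapsto\hat{\mathcal{L}}([E^{\tau\omega}_g])(\xi)$ from Theorem~\ref{thm:complexification}, so that uniform convergence on a complex neighborhood forces convergence of derivatives via Cauchy's integral formula. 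But as written, the step is not justified.
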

For a proof, see Lemma \ref{lem:simple-complex-derivative} where we prove a generalization of the above lemma.

Let $\widetilde{\mu}_{n,j}=\sum_{i=1}^{p(n,j)} \omega_i (\mathbf{1}_{g_i}+\mathbf{1}_{\widetilde{g}_i})$. Denote by $V_i=\frac{d}{dt}E^{t}_{g_i}|_{t=0}$ the tangent vector to the simple earthquake path $t\mapsto E^{t}_{g_i}$ at the point $t=0$. Then we have (for example, see \cite{MiyachiSaric})
$$
\frac{d}{dt}E^{t\widetilde{\mu}_{n,j}}\Big{|}_{t=0} =\sum_{i=1}^{p(n,j)}\omega_iV_i . 
$$
Since $d\mathcal{L}:T_{[id]}\T (\widetilde{X})\to\mathcal{H}_b(\widetilde{X})$ is linear, by Lemma \ref{lem:simple-derivative}, we get that
\begin{equation}
\label{eq:finite-earthquake-derivative}
\begin{split}
d\mathcal{L}\Big{(}\frac{d}{dt}E^{t\widetilde{\mu}_{n,j}}|_{t=0}\Big{)}(\xi )=\sum_{i=1}^{p(n,j)}\omega_i \int_{G(\widetilde{X})}\xi (h)\cos(g_i,h)dL_{\widetilde{X}}(h)\\ =\int_{G(\widetilde{X})}\int_{G(\widetilde{X})}\xi (h)\cos(g,h)dL_{\widetilde{X}}(h) d\widetilde{\mu}_{n,j}(g).
\end{split}
\end{equation}

\begin{rem}\label{rem:continuity}
	We claim that $g\mapsto \int_{G(\widetilde{X})}\xi (h)\cos(g,h)dL_{\widetilde{X}}(h)$ is a continuous function in $g$. It is enough to assume that $Supp(\xi)=[a,b] \times [c,d]$ is a box of geodesics. To see this, let $g, g' \in G(\widetilde X)$ where $g \neq g'$ and suppose that $g \in G(\widetilde{X})$ has endpoints $s$ and $t$. We define the closure of $\delta$-neighborhood of $g$ to be the box $\overline {B_{\delta}(g)}=[s-\delta, s+\delta] \times [t-\delta, t+\delta]$. We say that $g' \to g$ if $g' \in \overline {B_{\delta}(g)}$ as $\delta \to 0$. For all $h \in Supp(\xi)$ with no endpoints in $\overline {B_{\delta}(g)}$, we have $\int_{G(\widetilde{X})} |\xi(h)| |\cos(g',h)-\cos(g,h)| dL_{\widetilde{X}}(h) \to 0$ as $g' \to g$ since $|\cos(g',h) - \cos(g,h)| \to 0$ uniformly as $g' \to g$. For all $h \in Supp(\xi)$ with at least one endpoint in $\overline {B_{\delta}(g)}$, we also have $\int_{G(\widetilde{X})} |\xi(h)| |\cos(g',h)-\cos(g,h)| dL_{\widetilde{X}}(h) \to 0$ as $g' \to g$ since the Liouville measure $L([s-\delta, s+\delta] \times [c, d])=L([a, b] \times [t-\delta, t+\delta]) \to 0$ as $\delta \to 0$.
\end{rem}

Since $\widetilde{\mu}_{n,j}$ converges in the weak* topology to $\widetilde{\mu}|_{K_j}$ as $n\to\infty$, it follows that
$$
\lim_{n\to\infty} \int_{G(\widetilde{X})}\int_{G(\widetilde{X})}\xi (h)\cos(g,h)dL_{\widetilde{X}}(h) d\widetilde{\mu}_{n,j}(g)= \int_{G(\widetilde{X})}\int_{G(\widetilde{X})}\xi (h)\cos(g,h)dL_{\widetilde{X}}(h) d\widetilde{\mu}|_{K_j}(g).
$$
By Theorem \ref{thm:derivative-limit}, we have that
$$
\frac{d}{dt}{\mathcal{L}}([E^{t\widetilde{\mu}}])(\xi )\Big{|}_{t=0}=\lim_{j\to\infty} \int_{G(\widetilde{X})}\int_{G(\widetilde{X})}\xi (h)\cos(g,h)dL_{\widetilde{X}}(h) d\widetilde{\mu}|_{K_j}(g).
$$
We point out that even though the limit on the right-hand side of the above equation exists and that $\widetilde{\mu}|_{K_j}\to\widetilde{\mu}$ in the weak* topology as $j\to\infty$, we are not guaranteed that the limit is the Lebesgue integral with respect to $\widetilde{\mu}$ because $\int_{G(\widetilde{X})}\xi (h)\cos(g,h)dL_{\widetilde{X}}(h)$ is neither a positive function nor of compact support.  In order to prove the convergence toward the integral with respect to the measure $\widetilde{\mu}$, we need the following lemma proved by Bonahon and S\" ozen \cite{BonahonSozen}. We include a proof for the reader's convenience.

\begin{lem}
\label{lem:bound-deriv} Fix an isometric identification of $\widetilde{X}$ with the unit disk $\mathbb{D}$. 
Let $\xi :G(\mathbb{D})\to\mathbb{R}$ be a $\lambda$-H\"older continuous function whose support is in a box of geodesics $[a,b]\times [c,d]\subset S^1\times S^1\setminus\mathrm{diag}$. Let $g\in G(\mathbb{D})$ be a geodesic with at least one endpoint in $[a,b]\cup [c,d]$. Then
$$
\Big{|}\int_{G(\mathbb{D})}\xi (h)\cos(g,h)dL_{\mathbb{D}}(h)\Big{|} \leq \int_{G(\mathbb{D})}\Big{|}\xi (h)\cos(g,h)\Big{|}dL_{\mathbb{D}}(h) \leq C e^{-(1+\lambda )d_g}
$$
where $d_g\geq 0$ is the hyperbolic distance between $0\in\mathbb{D}$ and the geodesic $g$.
\end{lem}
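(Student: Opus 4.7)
The first inequality follows from the triangle inequality for integrals, so the content is in the second. My plan is to put $g$ into a standard position via an isometry of $\mathbb{D}$, compute $\cos(g,h)$ and $L_{\mathbb{D}}$ explicitly, and combine a Liouville-measure estimate on the thin strip of geodesics crossing $g$ with the $\lambda$-H\"older property of $\xi$ to produce the required $e^{-(1+\lambda)d_g}$ decay.

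First I would reduce to the case that $d_g$ is large: for $d_g$ bounded above, the trivial bound $\int|\xi\cos|\,dL\leq\|\xi\|_\infty L_{\mathbb{D}}([a,b]\times[c,d])$ can be absorbed into $C$. When $d_g$ is large, the two endpoints of $g$ on $S^1$ lie at angular distance $\asymp e^{-d_g}$; combined with the hypothesis that at least one endpoint is in $[a,b]\cup[c,d]$ and the positive gap between $[a,b]$ and $[c,d]$ on $S^1$, both endpoints of $g$ must lie near a single one of the two arcs. By symmetry assume both near $[a,b]$. Then apply a rotation of $\mathbb{D}$ fixing the basepoint $0$---this preserves the $\lambda$-H\"older norm of $\xi$ exactly---followed by the Cayley transform, so that $g$ becomes the semicircle in $\mathbb{H}$ with endpoints $\pm\alpha$, where $\alpha\asymp e^{-d_g}$, and $[a,b],[c,d]$ become disjoint intervals $[A,B]\supset(-\alpha,\alpha)$ and $[C,D]$ in $\mathbb{R}$ of Euclidean length $\asymp 1$.

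A geodesic $h$ with endpoints $x\in[A,B]$, $y\in[C,D]$ crosses $g$ iff $x\in(-\alpha,\alpha)$, and a direct intersection calculation for the two semicircles gives
\[
\cos(g,h)=\frac{\alpha^{2}-xy}{\alpha(y-x)}.
\]
Decomposing $\xi(x,y)=\xi(0,y)+[\xi(x,y)-\xi(0,y)]$ and using $|\xi(x,y)-\xi(0,y)|\leq\|\xi\|_{C^\lambda}|x|^\lambda$, the substitution $x=\alpha u$ together with the explicit form of $|\cos|$ yields
\[
\int_{-\alpha}^{\alpha}|x|^{\lambda}\,|\cos(g,h)|\,\frac{dx}{(x-y)^{2}}\lesssim\frac{\alpha^{1+\lambda}}{y^{2}},
\]
so that integrating over $y\in[C,D]$ (bounded away from $0$) produces an $O(\|\xi\|_{C^\lambda}e^{-(1+\lambda)d_g})$ bound for the H\"older-remainder contribution.

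The hard part will be controlling the principal term $\int_{C}^{D}|\xi(0,y)|\,(\alpha/y^{2})\,dy$, since the pointwise bound $|\xi(0,y)|\leq\|\xi\|_{\infty}$ only yields $O(e^{-d_g})$---a factor $e^{\lambda d_g}$ too large. To close the gap I would exploit that $\xi$, being continuous with compact support in $[A,B]\times[C,D]$, vanishes on the boundary; in particular $\xi(0,C)=\xi(0,D)=0$. Iterating the H\"older decomposition in the $y$-direction, combined with the sign change of $\cos(g,\cdot)$ across $x=\alpha^{2}/y$ and the scaling $x\sim\alpha$ on the strip, should extract the missing $e^{-\lambda d_g}$ factor via a second-order H\"older pairing. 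Summing the principal and remainder contributions would then yield $\int|\xi\cos|\,dL\leq Ce^{-(1+\lambda)d_g}$ as claimed.
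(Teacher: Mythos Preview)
Your remainder estimate is fine, but the principal term cannot be closed by either of the fixes you propose, and this is a genuine gap. You must bound
\[
\int_{C}^{D}\int_{-\alpha}^{\alpha}|\xi(0,y)|\,|\cos(g,h)|\,\frac{dx\,dy}{(x-y)^{2}}\ \asymp\ \alpha\int_{C}^{D}\frac{|\xi(0,y)|}{y^{2}}\,dy,
\]
and here $y\mapsto\xi(0,y)$ is a fixed $\lambda$-H\"older function on an interval $[C,D]$ of length $\asymp 1$. Its vanishing at the endpoints $C,D$ gives only $|\xi(0,y)|\lesssim\min(|y-C|,|y-D|)^{\lambda}$, which is of order $1$ in the interior of $[C,D]$ and contributes no factor of $\alpha^{\lambda}$. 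Your second idea, exploiting the sign change of $\cos(g,\cdot)$ across $x=\alpha^{2}/y$, is unavailable once absolute values sit inside the integral---and that is precisely what the middle inequality asserts. So the principal term really is $\asymp\alpha\asymp e^{-d_{g}}$, a factor $e^{\lambda d_{g}}$ short, with nothing left in your setup to recover it.

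The paper's argument avoids this by anchoring the H\"older increment in the \emph{$x$-variable} at a point where $\xi$ itself vanishes, rather than at the midpoint $x=0$. For each fixed $y$ it sends $y\to\infty$ by a M\"obius map $\psi$; the $x$-integration then runs over the interval $(s',t')$ between the images of the endpoints of $g$, with Liouville density simply $dx'$. The key observation is that the transformed function $\xi'(x')=\xi\circ\psi^{-1}(h(x',\infty))$ vanishes at some $r'\in(s',t')$, because one endpoint of $g$ lies in $[a,b]\cup[c,d]$ and hence the arc $(s',t')$ meets the complement of the relevant support interval. H\"older continuity at $r'$ then gives $|\xi'(x')|\leq\|\xi'\|_{\lambda}|t'-s'|^{\lambda}$ uniformly on $(s',t')$, and together with the length estimate $|t'-s'|\lesssim e^{-d_{g}}$ this yields $|I|\lesssim e^{-(1+\lambda)d_{g}}$ directly---there is no principal term at all. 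In your coordinates the fix is simply to replace the decomposition $\xi(x,y)=\xi(0,y)+[\xi(x,y)-\xi(0,y)]$ by $\xi(x,y)=\xi(r,y)+[\xi(x,y)-\xi(r,y)]$ for an $r\in[-\alpha,\alpha]$ with $\xi(r,y)=0$.
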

\begin{proof}
	To show that $\Big{|}\int_{G(\mathbb{D})}\xi (h)\cos(g,h)dL_{\mathbb{D}}(h)\Big{|}$ is bounded, it is enough to show that for fixed $\xi$ the integral
	$$I(y)=2 \int_s^t \xi(h(x,y)) \cos(g, h(x,y)) \frac{dx}{|x-y|^2}$$ is bounded for every $y \in (a, b)$. The factor $2$ is here to avoid dragging cumbersome constants in the computation below.
	
	\begin{figure}[h!]
		\centerline{
			\includegraphics[width=0.8\textwidth]{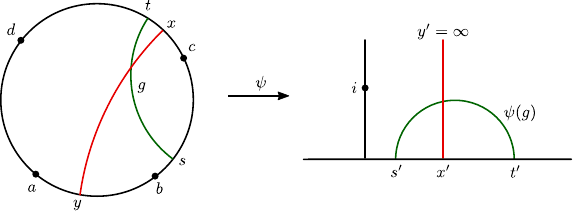}
		}
		\caption{\small{Earthquake along $g$ and M\"obius map $\psi$.}}\label{figure:Sine_Estimate}
	\end{figure}
	
	For ease of computation, we switch to the upper half-plane $\HH$ such that origin is sent to $i$ and $y$ is sent to $\infty$ by the M\"obius map $\psi: \D \to \HH$ given by $\psi(z)=\frac{i(y+z)}{y-z}$. A simple calculation shows that the infinitesimal form $\frac{dx}{|x-y|^2}$ is sent to $\frac{1}{2}dx'$.
	
	Then we have
	\begin{align*}
		I(\infty)
		&= \int_{s'}^{t'} \xi \circ \psi^{-1}(h(x', \infty)) \cos(\psi(g), h(x',\infty))dx'.
	\end{align*}
	
	To simplify the notation, let $\xi'(x')=\xi \circ \psi^{-1}(h(x', \infty))$. Note that there exists some $r' \in (s', t')$ such that $\xi'(r')=0$. That is, $(r', \infty) \notin Supp(\xi \circ \psi^{-1})$, see Figure \ref{figure:Sine_Estimate}. 
	
	Estimated from above, we have
	\begin{align*}
		|I(\infty)| 
		&= \bigg|\int_{s'}^{t'}  \xi'(x') \cos(\psi(g), h(x',\infty))dx'\bigg| \\
		&\leq \big \Vert \xi'(x') \big \Vert_{\infty} |t'-s'| 
	\end{align*}
	
	Note that $\xi'$ is H\"older continuous with H\"older exponent $\lambda$ since $\xi \circ \psi^{-1}$ is.
	By definition of $\lambda$-H\"older continuity, we have $\displaystyle \frac{\big |\xi'(x') - \xi'(r') \big |}{|x'-r'|^{\lambda}} \leq \Vert \xi' \Vert_{\lambda}$, then
	$$|\xi'(x')| = \big |\xi'(x')-\xi'(r') \big | \leq \Vert \xi' \Vert_{\lambda} \ |x'-r'|^{\lambda}$$ for all $x'$
	which implies that $\big \Vert \xi'(x') \big \Vert_{\infty} \leq \Vert \xi' \Vert_{\lambda} \ |x'-r'|^{\lambda}$. Hence, 
	\begin{align*}
		\big | I(\infty) \big |
		&\leq \Vert \xi' \Vert_{\lambda} \ |x'-r'|^{\lambda} |t'-s'| \\
		&\leq \Vert \xi' \Vert_{\lambda} \ |t'-s'|^{1+\lambda}.
	\end{align*} 
	
	\begin{figure}[h!]
		\centerline{
			\includegraphics[width=0.5\textwidth]{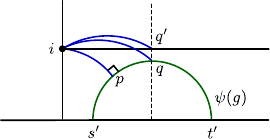}
		}
		\caption{\small{Orthogonal projections of $i$ and $\infty$ to $\psi(g)$.}}\label{figure:Interval_Estimate}
	\end{figure}
	
	And the rest of the proof follows from \cite{BonahonSozen} where it gives the estimate $|t'-s'| \leq C \cdot e^{-d_g}$. To see this, we suppose that $I(\infty) \neq 0$ and $d_g=d_{\D}(0,g)=d_{\HH}(i, \psi(g))=d_{\HH}(i, p) \geq R$ for some constant $R$. By our assumption, $s'$ and $t'$ stay in a closed interval $[-R', R']$ where $R'$ depends only on $R$, see Figure \ref{figure:Interval_Estimate}. Consider a point $q$ which is the orthogonal projection of the point $\infty$ to $\psi(g)$, namely $q=\frac{t'+s'}{2}+\frac{t'-s'}{2}i$. Also consider the horocycle centered at $\infty$ passing through $i$, namely the Euclidean horizontal line passing through $i$. Let $q'$ be the point of this horocycle which lies on the same vertical line as $q$, namely $q'=\frac{t'+s'}{2}+i$. Note that the piece of horocycle joining $i$ and $q'$ has length $\vert \frac{t'+s'}{2} \vert \leq R'$. It follows that $d_{\HH}(i, q') \leq R'$ since $(i, q')$ is a geodesic. 
	Thus,
	\begin{align*}
		d_{\HH}(i, p)=d_{\HH}(i, \psi(g))
		&\leq d_{\HH}(i, q)\\
		&\leq d_{\HH}(i, q') + d_{\HH}(q', q)\\
		&\leq R' + d_{\HH}(q', q)\\
		&= R' + \log{\frac{t'-s'}{2}}.
	\end{align*}Since $\frac{t'-s'}{2}\leq R'$, we have 
	\begin{align*}
		e^{-d_{\HH}(i, p)} 
		&\geq e^{-R'-\log{\frac{t'-s'}{2}}}\\
		&= e^{-R'} \cdot \frac{2}{t'-s'}\\
		&\geq \frac{e^{-R'}}{2(R')^2} \cdot (t'-s')
	\end{align*} This proves $|t'-s'| \leq C \cdot e^{-d_g}$.
\end{proof}

\begin{rem}\label{rem:bound-deriv-sin}
We similarly denote by $\sin(g,h)$ to be the sine of the angle from $g$ to $h$ if they intersect and to be zero if $h$ and $g$ do not intersect. The following estimate (similar to Lemma \ref{lem:bound-deriv}) can be made
	$$
	\Big{|}\int_{G(\mathbb{D})}\xi (h)\sin(g,h)dL_{\mathbb{D}}(h)\Big{|}\leq \int_{G(\mathbb{D})}\Big{|}\xi (h)\sin(g,h)\Big{|}dL_{\mathbb{D}}(h) \leq C e^{-(1+\lambda )d_g}
	$$
	where $d_g\geq 0$ is the hyperbolic distance between $0\in\mathbb{D}$ and the geodesic $g$.
\end{rem} And now we prove
\begin{thm}
\label{thm:earthquake-derivative}
Let $\mu$ be a bounded measured lamination on a conformally hyperbolic Riemann surface $X$ and let $\widetilde{\mu}$ be its lift to the universal covering $\widetilde{X}$. Then
\begin{equation}
\label{eq:derivative_main}
\frac{d}{dt}{\mathcal{L}}([E^{t\widetilde{\mu}}])(\xi )\Big{|}_{t=0}=\int_{G(\widetilde{X})} \int_{G(\widetilde{X})} \xi (h)\cos(g,h)dL_{\widetilde{X}}(h)d\widetilde{\mu}(g)
\end{equation}
\end{thm}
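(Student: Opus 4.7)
My strategy is to combine Theorem \ref{thm:derivative-limit} with the explicit formula (\ref{eq:finite-earthquake-derivative}) valid for finitely supported laminations, and then pass a limit inside the outer integral via dominated convergence. Applying Theorem \ref{thm:derivative-limit} with $k=1$ at $\tau=0$, together with (\ref{eq:finite-earthquake-derivative}), yields
$$
\frac{d}{dt}\mathcal{L}([E^{t\widetilde{\mu}}])(\xi)\Big|_{t=0} \;=\; \lim_{j\to\infty}\lim_{n\to\infty} \int_{G(\widetilde{X})}\int_{G(\widetilde{X})} \xi(h)\cos(g,h)\, dL_{\widetilde{X}}(h)\, d\widetilde{\mu}_{n,j}(g).
$$
Write $F(g) := \int_{G(\widetilde{X})}\xi(h)\cos(g,h)\, dL_{\widetilde{X}}(h)$. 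By Remark \ref{rem:continuity}, $F$ is continuous in $g$, and since the $\widetilde{\mu}_{n,j}$ are supported in the compact set $K_j$, the weak* convergence $\widetilde{\mu}_{n,j}\to\widetilde{\mu}|_{K_j}$ gives $\lim_{n}\int F\, d\widetilde{\mu}_{n,j}=\int F\, d\widetilde{\mu}|_{K_j}$. Thus the theorem reduces to showing $\lim_{j\to\infty}\int F\, d\widetilde{\mu}|_{K_j}=\int F\, d\widetilde{\mu}$.

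By a partition of unity argument analogous to the one used in the proof of Theorem \ref{thm:derivative-limit}, I may assume that the support of $\xi$ is contained in a single box of geodesics $[a,b]\times[c,d]$. Since $\{K_j\}$ exhausts $G(\widetilde{X})$, once $F$ is shown to be $\widetilde{\mu}$-integrable, the dominated convergence theorem (with $|F|$ as the dominating function) produces the desired limit. To establish integrability, I split the set $\{F\neq 0\}$ of geodesics intersecting some $h\in[a,b]\times[c,d]$ into two pieces: the collection $\mathcal{E}_1$ of $g$ with at least one endpoint in $[a,b]\cup[c,d]$, and the collection $\mathcal{E}_2$ of $g$ with both endpoints in the two complementary open arcs of $\hat{\mathbb{R}}$. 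On $\mathcal{E}_2$ the two endpoints stay in disjoint closed subarcs separated by positive angular distance, so $d_g$ is uniformly bounded; hence $\mathcal{E}_2$ lies in a compact subset of $G(\widetilde{X})$, contributes a finite $\widetilde{\mu}$-integral via the trivial bound $|F|\leq\|\xi\|_\infty L_{\widetilde{X}}([a,b]\times[c,d])$, and is eventually contained in $K_j$.

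On the unbounded piece $\mathcal{E}_1$, Lemma \ref{lem:bound-deriv} supplies the decay $|F(g)|\leq Ce^{-(1+\lambda)d_g}$, where $\lambda>0$ is a H\"older exponent for $\xi$. The main technical point is to pit this decay against the growth of $\widetilde{\mu}(\{d_g\leq R\})$: covering the hyperbolic ball $B(0,R)$ by unit-length geodesic transversals and invoking $\|\widetilde{\mu}\|_{Th}<\infty$ yields the at-worst exponential bound $\widetilde{\mu}(\{d_g\leq R\})\leq C'e^R$. The strict inequality $1+\lambda>1$ then wins the exponential competition: summing the layer-cake estimate
$$
\int_{\mathcal{E}_1\cap\{R\leq d_g< R+1\}}|F|\, d\widetilde{\mu} \;\leq\; Ce^{-(1+\lambda)R}\cdot C'e^{R+1} \;=\; C''e^{-\lambda R}
$$
over $R\in\mathbb{N}$ gives $\int_{\mathcal{E}_1}|F|\, d\widetilde{\mu}<\infty$. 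Dominated convergence now delivers $\lim_{j}\int F\, d\widetilde{\mu}|_{K_j}=\int F\, d\widetilde{\mu}$, which finishes the proof. I expect the verification of $\widetilde{\mu}$-integrability on $\mathcal{E}_1$ to be the principal obstacle, since $F$ is neither positive nor compactly supported; the H\"older decay exponent $1+\lambda$ supplied by Lemma \ref{lem:bound-deriv} is precisely calibrated to overcome the exponential growth of a Thurston-bounded lamination.
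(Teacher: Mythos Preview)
Your proposal is correct and follows essentially the same route as the paper: reduce via Theorem \ref{thm:derivative-limit} and (\ref{eq:finite-earthquake-derivative}) to showing $\int |F|\,d\widetilde{\mu}<\infty$, then balance the decay $|F(g)|\leq Ce^{-(1+\lambda)d_g}$ from Lemma \ref{lem:bound-deriv} against an exponential bound on the $\widetilde{\mu}$-mass of geodesics at distance $\leq R$ coming from $\|\widetilde{\mu}\|_{Th}<\infty$. The only cosmetic difference is that the paper obtains the mass bound $\widetilde{\mu}(\{n\leq d_g<n+1\})\lesssim e^n$ by an explicit decomposition of the annulus family $\mathcal{F}_n$ into $O(e^n)$ nested subfamilies (each crossing a unit radial arc, hence of mass $\leq\|\widetilde{\mu}\|_{Th}$), whereas you invoke a covering of $B(0,R)$ by unit transversals; the paper's subfamily argument is what makes your covering claim rigorous.
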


\begin{proof} In order to finish the proof it remains to prove that 
$$\lim_{j\to\infty} \int_{G(\widetilde{X})}\int_{G(\widetilde{X})}\xi (h)\cos(g,h)dL_{\widetilde{X}}(h) d\widetilde{\mu}|_{K_j}(g)$$ is equal to the double integral in the statement of the theorem. By the dominated convergence theorem, it is enough to show that 
$$
\int_{G(\widetilde{X})}\Big{|}\int_{G(\widetilde{X})}\xi (h)\cos(g,h)dL_{\widetilde{X}}(h)\Big{|}d\widetilde{\mu}(g)<\infty .
$$

We identify $\widetilde{X}$ with $\mathbb{D}$ by an isometry. Let $C_n=\{ z\in\mathbb{D}:n<\rho_{\mathbb{D}}(0,z)\leq n+1\}$ be the half-closed annulus around $0$ with inner radius $n$ and outer radius $n+1$. For each $n\geq 2$, let  $\mathcal{F}_n$ be the family of geodesics of the support $|\widetilde\mu |$ of $\widetilde\mu$ that intersect $C_n$ but do not intersect $C_{n-1}$. We partition $\mathcal{F}_n$ into finitely many subfamilies, see Figure \ref{figure:Compact_Subfamilies}. Namely, if $g,g'\in\mathcal{F}_n$ and either $g$ separates $g'$ and $0$, or $g'$ separates $g$ and $0$ then $g$ and $g'$ belongs to the same subfamily. Each subfamily has the unique geodesic that is farthest away from $0$. 

\begin{figure}[h!]
	\centerline{
		\includegraphics[width=0.3\textwidth]{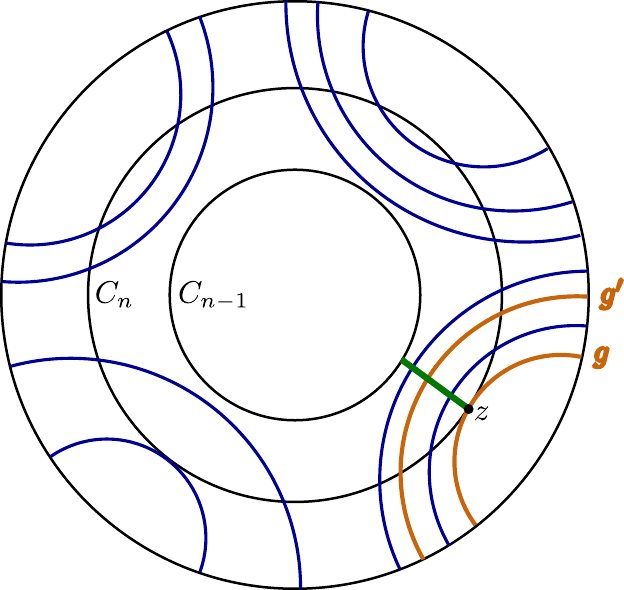}
	}
	\caption{\small{Construction of $\mathcal{F}_n$ and corresponding subfamilies.}}\label{figure:Compact_Subfamilies}
\end{figure}

Let $g$ be one geodesic in a subfamily of $\mathcal{F}_n$ that is farthest away from $0$ and let $z\in g$ be the closest point of $g$ to $0$. Then the shortest distance $\rho (0,z)$  from $g$ to $0$ is less than or equal to $n+1$. This is equivalent to $\log \frac{1+|z|}{1-|z|} \leq n+1$ which implies $1-|z| \geq C_1e^{-n}$ for some universal constant  $C_1>0$. Therefore the Euclidean circle which contains the geodesic $g$ has the Euclidean radius $r\geq C_1e^{-n}$. Therefore the arc length of the unit circle $S^1$ cut out by the geodesic $g$ is at least $C_1e^{-n}$ by the inequality $\tan^{-1}(x) > \frac{1}{2}x$ for $0<x\leq 1$ and some elementary Euclidean considerations. We conclude that the number of subfamilies of $\mathcal{F}_n$ is at most $\frac{2\pi}{C_1}e^n$.

To simplify the notation, set $I(g)=\Big{|}\int_{G(\widetilde{X})}\xi (h)\cos(g,h)dL_{\widetilde{X}}(h)\Big{|}$. By Lemma \ref{lem:bound-deriv}, we have that $I(g)\leq C e^{-(1+\lambda )n}$. The total measure of the geodesics in each subfamily $\mathcal{F}_n^i$ of $\mathcal{F}_n$ is at most $\|\widetilde\mu\|_{Th}$ for $\widetilde\mu$ because each subfamily intersects an arc of the radius of hyperbolic length $1$. It follows that
$\int_{\mathcal{F}_n^i}I(g)d\widetilde\mu (g)\leq Ce^{-(1+\lambda )n}$ for each subfamily $\mathcal{F}_n^i$ and the total integral $\int_{\mathcal{F}_n}I(g)d\widetilde\mu(g)$ is bounded by $C_2e^{-\lambda n}$ for some universal constant $C_2>0$. Since $\sum_{n=1}^{\infty} e^{-\lambda n}<\infty$ 
, the theorem is proved.
\end{proof}

By changing the basepoint of $\mathcal{T}(X)$, Theorem \ref{thm:earthquake-derivative} gives
\begin{cor}
	\label{thm:earthquake-derivative_for_ any_t}
	Let $\mu$ be a bounded measured lamination on a conformally hyperbolic Riemann surface $X$ and let $\widetilde{\mu}$ be its lift to the universal covering $\widetilde{X}$. Then
	\begin{equation}
		\label{eq:derivative_main_for_any_t}
		\frac{d}{dt}{\mathcal{L}}([E^{t\widetilde{\mu}}])(\xi )=\int_{G(\widetilde{X})} \int_{G(\widetilde{X})} \xi (h)\cos(E^{t\widetilde{\mu}}(g),E^{t\widetilde{\mu}}(h))dL_{[E^{t\widetilde{\mu}}]}(h)d\widetilde{\mu}(g)
	\end{equation}
\end{cor}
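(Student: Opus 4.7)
The plan is to reduce the corollary to Theorem~\ref{thm:earthquake-derivative} by changing the basepoint of the Teichm\"uller space from $[\mathrm{id}]$ to $[E^{t\widetilde{\mu}}]$. Fix $t\in\mathbb{R}$ (small enough that everything in sight is defined), let $Y$ be the Riemann surface obtained from the earthquake $E^{t\widetilde{\mu}}$, and let $\widetilde{Y}$ be its universal cover. The quasisymmetric boundary extension $\hat{f}_t$ of $E^{t\widetilde{\mu}}$ induces an equivariant homeomorphism $\hat{f}_t:G(\widetilde{X})\to G(\widetilde{Y})$. Let $\nu:=(\hat{f}_t)_*\widetilde{\mu}$, a bounded measured lamination on $\widetilde{Y}$ whose Thurston norm is controlled by $\|\widetilde{\mu}\|_{Th}$.

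First I would verify the key \emph{factorization identity} $E^{(t+s)\widetilde{\mu}}=E^{s\nu}\circ E^{t\widetilde{\mu}}$ at the level of boundary maps: shearing along $\widetilde{\mu}$ for additional time $s$ after the shear of time $t$ equals shearing along the pushed-forward lamination $\nu$ for time $s$ on $\widetilde{Y}$. For a simple earthquake along one geodesic this is immediate, for finitely supported laminations it follows from iterating the definition of elementary earthquakes given in Section~\ref{sec:earthquakes}, and in general one obtains it by passing to weak$^{*}$ limits of finite-support approximations of $\widetilde{\mu}$. Combined with the definition of the Liouville map (the pullback of the Liouville measure of the target), this yields
$$\mathcal{L}([E^{(t+s)\widetilde{\mu}}])(\xi)=\mathcal{L}_Y([E^{s\nu}])(\xi\circ\hat{f}_t^{-1}),$$
and differentiating in $s$ at $s=0$ gives
$$\frac{d}{dt}\mathcal{L}([E^{t\widetilde{\mu}}])(\xi)=\frac{d}{ds}\mathcal{L}_Y([E^{s\nu}])(\xi\circ\hat{f}_t^{-1})\Big|_{s=0}.$$

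Next I would apply Theorem~\ref{thm:earthquake-derivative} to the bounded measured lamination $\nu$ on $Y$ with test function $\xi_Y:=\xi\circ\hat{f}_t^{-1}\in H^{\lambda}(\widetilde{Y})$, obtaining
$$\frac{d}{ds}\mathcal{L}_Y([E^{s\nu}])(\xi_Y)\Big|_{s=0}=\int_{G(\widetilde{Y})}\int_{G(\widetilde{Y})}\xi_Y(h')\cos(g',h')\,dL_{\widetilde{Y}}(h')\,d\nu(g').$$
Then I would substitute $g'=\hat{f}_t(g)$ and $h'=\hat{f}_t(h)$. By construction $d\nu(\hat{f}_t(g))=d\widetilde{\mu}(g)$; by the very definition of the pullback Liouville measure, $dL_{[E^{t\widetilde{\mu}}]}(h)=dL_{\widetilde{Y}}(\hat{f}_t(h))$; $\xi_Y(\hat{f}_t(h))=\xi(h)$ by construction; and the cosine term becomes $\cos(E^{t\widetilde{\mu}}(g),E^{t\widetilde{\mu}}(h))$ by the notational convention in the statement. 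This produces exactly the right-hand side of (\ref{eq:derivative_main_for_any_t}).

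The main obstacle is the factorization identity, since it is the only non-formal input; it is a consequence of the naturality of shearing under post-composition together with the limiting construction of earthquakes on strata from Section~\ref{sec:earthquakes}, but a careful proof requires tracking how $E^{t\widetilde{\mu}}$ moves strata and laminations. Once the identity is in hand, the rest of the argument is a chain rule together with a transparent change of variables under $\hat{f}_t$ in both the Liouville measure and the bounded measured lamination.
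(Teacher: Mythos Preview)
Your proposal is correct and follows exactly the approach the paper indicates: the paper simply says ``By changing the basepoint of $\mathcal{T}(X)$, Theorem~\ref{thm:earthquake-derivative} gives'' the corollary, and you have spelled out precisely what this basepoint change entails (the earthquake flow identity $E^{(t+s)\widetilde{\mu}}=E^{s\nu}\circ E^{t\widetilde{\mu}}$ with $\nu=(\hat f_t)_*\widetilde\mu$, followed by the change of variables under $\hat f_t$). The only detail to watch is that $\xi\circ\hat f_t^{-1}$ is H\"older with a possibly smaller exponent since $\hat f_t$ is quasisymmetric, but Theorem~\ref{thm:earthquake-derivative} applies to any $\xi\in H(\widetilde X)$, so this causes no difficulty.
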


In order to extend the above formula to quake-bends, we first need an extension of Lemma \ref{lem:simple-derivative}. We begin with the definition of signed complex distance in $\HH^3$, see \cite{Series}.
Let $\alpha$ be any oriented line in $\HH^3$, and let $P_1, P_2 \in \alpha$. Let $d(P_1,P_2)\geq 0$ denote the hyperbolic distance between $P_1$ and $P_2$. We define the signed real hyperbolic distance $\delta_{\alpha}(P_1,P_2)$ as $d(P_1,P_2)$ if the orientation of the arc from $P_1$ to $P_2$ coincides with that of $\alpha$ and $-d(P_1,P_2)$ otherwise. Now let $L_1, L_2 \subset \HH^3$ be oriented lines with distinct endpoints on the Riemann sphere $\hat \C$, with oriented common perpendicular $\alpha$ meeting $L_1, L_2$ in points $Q_1, Q_2$ respectively, where if $L_1, L_2$ intersect we take $Q_1=Q_2$. Let $\bm{\mathrm{v_i}}$ be tangent vectors to the positive directions of $L_i$ at $Q_i$, $i=1,2$. Let $\Pi$ be the hyperbolic plane through $Q_2$ orthogonal to $\alpha$ and let $\bm{\mathrm{w_1}}$ denote the parallel translate of $\bm{\mathrm{v_1}}$ along $\alpha$ to $Q_2$. Let $\bm{\mathrm{n}}$ be a unit vector at $Q_2$ pointing in the positive direction of $\alpha$. The signed complex distance between $L_1$ and $L_2$ is $$\bm{d}(L_1,L_2)=\bm{\delta_n}(L_1,L_2)=\bm{\delta}_{\alpha}(L_1,L_2)=\delta_{\alpha}(Q_1,Q_2)+i\theta$$ where $\theta$, measured modulo $2\pi\Z$, is the angle between $\bm{\mathrm{w_1}}$ and $\bm{\mathrm{v_2}}$ measured anticlockwise in the plane spanned by $\bm{\mathrm{w_1}}, \bm{\mathrm{v_2}}$ and oriented by $\bm{\mathrm{n}}$.

\begin{lem}
\label{lem:complex-distance}
Let $x,y\in \R$ and $\omega>0, \tau \in \C$ with $|\tau|<\delta$. Let $g$ be the geodesic with endpoints oriented from $0$ to $\infty$ and $h$ be the geodesic with endpoints oriented from $xe^{\tau\omega}$ to $y$. Then
$$\cosh(\bm{d}(g,h))=\frac{-xe^{\tau\omega}-y}{xe^{\tau\omega}-y}=1-\frac{2}{cr(y, 0, xe^{\tau\omega}, \infty)}$$ where $\bm{d}$ is the complex distance.
\end{lem}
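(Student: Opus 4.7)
The plan is to prove the two equalities in the statement separately. The second equality, $\frac{-xe^{\tau\omega}-y}{xe^{\tau\omega}-y}=1-\frac{2}{cr(y,0,xe^{\tau\omega},\infty)}$, is a direct algebraic manipulation: using the standard cross-ratio convention $cr(a,b,c,d)=\frac{(a-c)(b-d)}{(a-d)(b-c)}$ with the natural limit at $\infty$, one obtains $cr(y,0,xe^{\tau\omega},\infty)=\frac{xe^{\tau\omega}-y}{xe^{\tau\omega}}$, and substitution yields the claimed identity.

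For the geometric content $\cosh(\bm{d}(g,h))=\frac{-xe^{\tau\omega}-y}{xe^{\tau\omega}-y}$, I would use the calculus of M\"obius involutions in the spirit of \cite{Series}. To each oriented geodesic $L$ of $\HH^3$ with endpoints $p,q\in\hat\C$ one associates the half-turn $\sigma_L \in PSL_2(\C)$ (rotation by $\pi$ about $L$), whose action on $\hat\C$ is the unique M\"obius involution fixing $p$ and $q$. Classically, the axis of the composition $\sigma_g\sigma_h$ is the common perpendicular $\alpha$ of $g$ and $h$, and its complex translation length equals $2\bm{d}(g,h)$. Hence, for any $SL_2(\C)$-lifts,
$$\mathrm{tr}(\sigma_g\sigma_h)=\pm 2\cosh(\bm{d}(g,h)),$$
where the sign is fixed by the choice of lifts.

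The computation is then explicit. For $g$, $\sigma_g(z)=-z$, with $SL_2(\C)$-lift
$$\begin{pmatrix} i & 0 \\ 0 & -i \end{pmatrix}.$$
For $h$ with endpoints $a:=xe^{\tau\omega}$ and $b:=y$, the M\"obius involution fixing $a,b$ is $\sigma_h(z)=\frac{(a+b)z-2ab}{2z-(a+b)}$, with $SL_2(\C)$-lift
$$\frac{1}{i(a-b)}\begin{pmatrix} a+b & -2ab \\ 2 & -(a+b) \end{pmatrix}.$$
Multiplying and taking the trace yields $\mathrm{tr}(\sigma_g\sigma_h)=\frac{2(a+b)}{a-b}$, so $\cosh(\bm{d}(g,h))=\pm\frac{a+b}{a-b}$.

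The main obstacle is pinning down the sign, since the trace formula determines $\cosh(\bm{d}(g,h))$ only up to $\pm 1$. Both sides are holomorphic in $(a,b)$ on the non-degenerate locus $\{a\neq b\}$, so it suffices to verify the sign in one concrete configuration. I would take $\tau=0$, $x=-1$, $y=1$, where $g,h\subset\HH^2$ intersect orthogonally at $i$ so that $\bm{d}(g,h)=i\pi/2$ and $\cosh(\bm{d}(g,h))=0$, in agreement with $\frac{-xe^{\tau\omega}-y}{xe^{\tau\omega}-y}=0$; a second non-orthogonal test, say $(\tau,x,y)=(0,1,2)$ where the two geodesics lie in $\HH^2$ and are disjoint, distinguishes the sign and confirms that $\cosh(\bm{d}(g,h))=-\frac{a+b}{a-b}$. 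Combined with the algebraic identity of the first paragraph, this completes the proof.
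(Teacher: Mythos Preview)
Your proof is correct, but it takes a genuinely different route from the paper's. The paper proceeds by explicitly finding the common perpendicular $l$ (with endpoints $\pm\sqrt{xye^{\tau\omega}}$), then applying the M\"obius map $f(z)=\frac{z-\sqrt{xye^{\tau\omega}}}{z+\sqrt{xye^{\tau\omega}}}$ to send $l$ to the positive imaginary axis; in this normalized picture $\bm{d}(g,h)=\log f(y)$ can be read off directly from the definition of signed complex distance, and the identity $\cosh(\log f(y))=\frac{-xe^{\tau\omega}-y}{xe^{\tau\omega}-y}$ drops out of an elementary simplification. No sign ambiguity ever arises because the paper works with the definition itself rather than with a trace.

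Your approach via half-turns and the identity $\mathrm{tr}(\tilde\sigma_g\tilde\sigma_h)=\pm 2\cosh(\bm d(g,h))$ is the standard one in \cite{Series} and is more conceptual: once the lifts are written down, the computation is a two-line matrix multiplication. The cost is the $\pm$ sign, which you handle by a continuity/holomorphicity argument and a test point. This is legitimate, but note that your first test $(\tau,x,y)=(0,-1,1)$ gives $0$ on both sides and so carries no information; only the second test at $(0,1,2)$ actually pins down the sign, and you should spell out the independent computation of $\cosh(\bm d(g,h))$ there (e.g.\ by directly finding the real hyperbolic distance between the $y$-axis and the semicircle over $[1,2]$ and checking the orientation angle is $0$, not $\pi$). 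You should also be explicit that the relevant parameter domain is connected so that the locally constant sign is globally constant. With those small clarifications your argument is complete, and arguably quicker than the paper's explicit normalization.
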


\begin{proof}
For $|\tau|<\delta$, $xe^{\tau\omega}$ is in a small neighborhood of $x$.
A simple computation shows that there is a unique common perpendicular geodesic $l$ with endpoints $-\sqrt{xye^{\tau\omega}}, \sqrt{xye^{\tau\omega}}$ intersecting $g, h$ at $p, q$, respectively. 

Define a M\"obius map $f(z)=\frac{z-\sqrt{xye^{\tau\omega}}}{z+ \sqrt{xye^{\tau\omega}}}$, which sends $l$ to the geodesic $l'=(0, \infty)$, $g$ to the geodesic $g'=(-1, 1)$ and $h$ to the geodesic $h'=(f(xe^{\tau\omega}), f(y))$, see Figure \ref{figure:H3_Distance}. Note that $l'$ intersects $g', h'$ at $p'=f(p), q'=f(q)$, respectively. A direction computation shows that $f(y)=\frac{y-\sqrt{xye^{\tau\omega}}}{y+ \sqrt{xye^{\tau\omega}}}=-f(xe^{\tau\omega})$, this implies that they are diametrically opposite of the origin.\\

\begin{figure}[h!]
	\centerline{
		\includegraphics[width=0.8\textwidth]{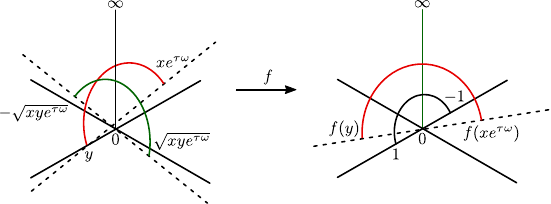}
	}
	\caption{\small{Hyperbolic distance in upper-half space.}}\label{figure:H3_Distance}
\end{figure}

Now we have the hyperbolic distance,\\

$d(g,h)=d(g',h')=d(p',q')=
\begin{cases}
\displaystyle \log|f(y)| \quad \text{if } |f(y)|\geq 1\\
\\

\displaystyle \log\frac{1}{|f(y)|} \quad \text{if } |f(y)|<1.
\end{cases}$\\
\\

By our definition of signed complex distance in $\HH^3$,
\begin{align*}
\bm{d}(g,h)=\bm{d}(g',h')
&=\delta_{l'}(p',q')+i\theta\\
&=\log|f(y)|+i(arg(f(y)))\\
&=\log(f(y)).
\end{align*} Finally, 
\begin{align*}
\cosh(\bm{d}(g,h))=\frac{e^{\log(f(y))}+e^{-\log(f(y))}}{2}
&=\frac{f(y)+\frac{1}{f(y)}}{2}\\
&=\frac{1}{2}\bigg(\frac{y-\sqrt{xye^{\tau\omega}}}{y+ \sqrt{xye^{\tau\omega}}}+\frac{y+\sqrt{xye^{\tau\omega}}}{y-\sqrt{xye^{\tau\omega}}}\bigg)\\
&=\frac{-xe^{\tau\omega}-y}{xe^{\tau\omega}-y}
\end{align*} and the second equality is straightforward.
\end{proof}

The following result will be useful in our paper.
\begin{lem}[see \cite{Dong}]
	\label{lem:leibniz_rule}
	Let $X$ be an open subset of $\mathbb{C}$ and $\Omega$ be a measure space. Suppose that $f: X \times \Omega \to \mathbb{C}$ satisfies the following conditions:
	\begin{enumerate}
		\item[(i)] $f(\tau, \omega)$ is a Lebesgue integrable function of $\omega$ for each $\tau \in X$.
		
		\item[(ii)] For almost all $\omega \in \Omega$, the derivative $\frac{\partial f(\tau, \omega)}{\partial \tau}$ exists for all $\tau \in X$.
		
		\item[(iii)] There is an integrable function $\Theta: \Omega \to \mathbb{C}$ such that $\Big| \frac{\partial f(\tau, \omega)}{\partial \tau} \big| \leq \Theta$ for all $\tau \in X$.
	\end{enumerate}
	Then for all $\tau \in X$, $\frac{d}{d\tau} \int_{\Omega} f(\tau, \omega) d\omega = \int_{\Omega} \frac{\partial}{\partial \tau} f(\tau, \omega) d\omega$.
\end{lem}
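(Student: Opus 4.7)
The statement is the standard dominated-convergence version of the Leibniz rule for differentiating under an integral sign, adapted to complex parameter $\tau$. My plan is to reduce complex differentiability of $F(\tau):=\int_\Omega f(\tau,\omega)\,d\omega$ at an arbitrary point $\tau_0\in X$ to an exchange of limit and integral, and then apply the dominated convergence theorem.

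First I would fix $\tau_0\in X$ and, since $X$ is open, choose $r>0$ with $\overline{B(\tau_0,r)}\subset X$. For any $h\in\mathbb{C}$ with $0<|h|<r$ and almost every $\omega\in\Omega$, the hypothesis (ii) says that $\tau\mapsto f(\tau,\omega)$ is complex-differentiable on $X$, hence holomorphic on $B(\tau_0,r)$. This lets me write the difference quotient as a line integral of the derivative along the segment from $\tau_0$ to $\tau_0+h$:
\begin{equation*}
\frac{f(\tau_0+h,\omega)-f(\tau_0,\omega)}{h}=\int_0^1 \frac{\partial f}{\partial \tau}(\tau_0+sh,\omega)\,ds.
\end{equation*}
By hypothesis (iii) the integrand is bounded in modulus by $\Theta(\omega)$ uniformly in $s\in[0,1]$ and in $h$ with $|h|<r$, so the whole difference quotient is dominated by $\Theta(\omega)$ for all such $h$.

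Next I would pick an arbitrary sequence $h_n\in\mathbb{C}\setminus\{0\}$ with $h_n\to 0$ and $|h_n|<r$, and form
\begin{equation*}
\frac{F(\tau_0+h_n)-F(\tau_0)}{h_n}=\int_\Omega \frac{f(\tau_0+h_n,\omega)-f(\tau_0,\omega)}{h_n}\,d\omega,
\end{equation*}
where the exchange of subtraction and integration uses (i). The integrands converge pointwise (for a.e.\ $\omega$) to $\frac{\partial f}{\partial\tau}(\tau_0,\omega)$ by (ii), they are all dominated by the integrable function $\Theta$, and hypothesis (iii) also ensures that $\frac{\partial f}{\partial\tau}(\tau_0,\cdot)$ itself is integrable. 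The dominated convergence theorem then yields
\begin{equation*}
\lim_{n\to\infty}\frac{F(\tau_0+h_n)-F(\tau_0)}{h_n}=\int_\Omega \frac{\partial f}{\partial\tau}(\tau_0,\omega)\,d\omega.
\end{equation*}
Since the limit is independent of the chosen sequence $h_n\to 0$, $F$ is complex-differentiable at $\tau_0$ with the claimed derivative, and as $\tau_0\in X$ was arbitrary the lemma follows.

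There is essentially no analytical obstacle beyond bookkeeping; the only point where care is needed is justifying the segment formula in the complex setting, which I resolve by using that $\tau\mapsto f(\tau,\omega)$ is holomorphic on the disk $B(\tau_0,r)\subset X$ and applying the fundamental theorem of calculus along the straight segment $[\tau_0,\tau_0+h]\subset B(\tau_0,r)$. Everything else is a direct invocation of dominated convergence.
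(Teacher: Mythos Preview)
Your proof is correct and is the standard dominated-convergence argument for differentiating under the integral sign. Note that the paper does not actually supply a proof of this lemma: it is stated with a citation to \cite{Dong} and used as a known auxiliary result, so there is no in-paper argument to compare against.
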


We first find the derivative along a simple quake-bend (which generalizes Lemma \ref{lem:simple-derivative}):
\begin{lem}
\label{lem:simple-complex-derivative}
Let $\xi\in H(\widetilde{X})$ and $g\in G(\widetilde{X})$ be a fixed geodesic. Let $\delta >0$ be the radius for which $\hat{\mathcal{L}}([E^{\tau \omega}_g])(\xi )$ is defined. Then, for $\tau\in\mathbb{C}$ with $|\tau |<\delta$ and $\omega>0$,
$$
\frac{d}{d\tau}\hat{\mathcal{L}}([E^{\tau \omega}_g])(\xi )=\omega\int_{G(\widetilde{X})}\xi (h)\cosh(\bm{d}(E^{\tau \omega}_g(g),E^{\tau \omega}_g(h)))dL_{[E^{\tau\omega}_g]}(h),
$$
where $E^{\tau \omega}_g(g)=g$ and $\bm d(E^{\tau \omega}_g(g),E^{\tau \omega}_g(h))$ is the complex distance between $E^{\tau \omega}_g(g)$ and $E^{\tau \omega}_g(h)$.
\end{lem}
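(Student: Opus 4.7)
The strategy is to choose an isometric identification of $\widetilde X$ with $\HH$ that sends $g$ to the positive imaginary axis oriented from $0$ to $\infty$. Under this identification, the boundary action of $E^{\tau\omega}_g$ is piecewise Möbius: the identity on $\{x\leq 0\}\cup\{\infty\}$ and the complex Möbius transformation $x\mapsto xe^{\tau\omega}$ on $\{x\geq 0\}\cup\{\infty\}$. Correspondingly $G(\widetilde X)$ decomposes into the set $G_L$ of geodesics with both endpoints negative, the set $G_R$ with both endpoints positive, and the crossing locus $G_C$ of geodesics with endpoints of opposite sign. Using a partition of unity it suffices to treat $\xi$ supported in one box of geodesics at a time.

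The restrictions of $E^{\tau\omega}_g$ to $\{x\leq 0\}\cup\{\infty\}$ and to $\{x\geq 0\}\cup\{\infty\}$ are both Möbius and therefore preserve cross-ratios, so $L_{[E^{\tau\omega}_g]}$ agrees with $L_{\widetilde X}$ on both $G_L$ and $G_R$. Consequently, for $\xi$ supported in a box inside $G_L$ or $G_R$ the quantity $\hat{\mathcal L}([E^{\tau\omega}_g])(\xi)$ is independent of $\tau$ and its derivative vanishes. In parallel I adopt the convention $\cosh\bm d(g,h)=0$ whenever $g$ and $h$ do not cross, consistent with the convention $\cos(g,h)=0$ used in the real analytic case, so the claimed right-hand side also vanishes on $G_L\cup G_R$.

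The real content lies on $G_C$. Parameterizing $h$ by its endpoints $(x,y)$ with $x>0$ and $y<0$ (the symmetric region $x<0,\,y>0$ is handled identically), a direct cross-ratio expansion on an infinitesimal box gives the density
$$
dL_{[E^{\tau\omega}_g]}(x,y)=\frac{e^{\tau\omega}}{(xe^{\tau\omega}-y)^{2}}\,dx\,dy.
$$
Both $\hat{\mathcal L}([E^{\tau\omega}_g])(\xi)$ (by Theorem \ref{thm:complexification}) and the integral of $\xi$ against this formal density (by holomorphy of parameter integrals) are holomorphic in $\tau$ on $\{|\tau|<\delta\}$ and agree for real $\tau$, hence throughout the disk. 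Since $|xe^{\tau\omega}-y|$ is bounded away from zero on the support of $\xi$ uniformly for $|\tau|<\delta$, a uniform dominating function exists and Lemma \ref{lem:leibniz_rule} applies. Computing
$$
\frac{\partial}{\partial\tau}\frac{e^{\tau\omega}}{(xe^{\tau\omega}-y)^{2}}=\omega\cdot\frac{-xe^{\tau\omega}-y}{xe^{\tau\omega}-y}\cdot\frac{e^{\tau\omega}}{(xe^{\tau\omega}-y)^{2}}
$$
and identifying the middle factor with $\cosh\bm d(g,E^{\tau\omega}_g(h))=\cosh\bm d(E^{\tau\omega}_g(g),E^{\tau\omega}_g(h))$ via Lemma \ref{lem:complex-distance} (using $E^{\tau\omega}_g(g)=g$), the formula falls out after assembling the crossing and non-crossing contributions.

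The main obstacle I expect is the identification of $\hat{\mathcal L}([E^{\tau\omega}_g])(\xi)$ with the parameter integral $\iint \xi(x,y)\,\frac{e^{\tau\omega}}{(xe^{\tau\omega}-y)^{2}}\,dx\,dy$ for complex $\tau$: at non-real $\tau$, $L_{[E^{\tau\omega}_g]}$ is not a positive measure but merely a Hölder distribution defined by the box-approximation limit of \cite[Lemma 6]{DongSaric}. The holomorphic-continuation argument sketched above is clean, but a bare-hands alternative is to compare the box sums $I_n$ defining $\hat{\mathcal L}([E^{\tau\omega}_g])(\xi)$ with Riemann sums for the parameter integral and let $n\to\infty$. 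Verifying the uniformity of the Leibniz bound is then routine given the compactness of the support of $\xi$ and the fact that the perturbation $x\mapsto xe^{\tau\omega}$ stays in a small neighborhood of the positive real axis for $|\tau|<\delta$.
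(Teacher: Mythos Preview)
Your proposal is correct and follows essentially the same route as the paper: identify $g$ with the positive imaginary axis in $\HH$, reduce to the crossing locus, compute the derivative for real parameter via the explicit density $\frac{e^{t\omega}\,dx\,dy}{(xe^{t\omega}-y)^2}$, analytically continue to complex $\tau$ using that both sides are holomorphic (invoking Lemma~\ref{lem:leibniz_rule} for the integral side), and identify the kernel via Lemma~\ref{lem:complex-distance}. Your treatment is slightly more explicit about the non-crossing regions $G_L,G_R$ and the convention $\cosh\bm d(g,h)=0$ there, and you flag the distribution-versus-measure issue for complex $\tau$ more carefully than the paper does, but the argument is the same.
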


\begin{proof}
We identify $\widetilde{X}$ with the upper half-plane $\mathbb{H}$ such that $g$ is identified with the positive $y$-axis. Without loss of generality, we assume that $\xi$ has support in a box of geodesics and let $[a,b]\times [c,d]\subset G(\mathbb{H})$ be the sub-box of the support of $\xi$ such that each geodesic of its interior intersects $g$. 
A computation for $t$ real gives that 
\begin{flalign}\label{first-derivative-real}
	\frac{d}{dt} {\mathcal{L}}([E^{t \omega}_g])(\xi )
	&=\frac{d}{dt} \iint_{[a,b]\times [c,d]} \xi \circ (E_g^{t\omega})^{-1}(h(x, y)) \frac{dxdy}{(x-y)^2}\\
	&=\frac{d}{dt} \iint_{[a,b]\times [c,d]} \xi (h(e^{-t\omega}x, y)) \frac{dxdy}{(x-y)^2}\nonumber\\
	&=\omega\iint_{[a,b]\times [c,d]} \xi (h(x,y))\frac{-xe^{t\omega}-y}{xe^{t\omega}-y}\frac{e^{t\omega}dxdy}{(xe^{t\omega}-y)^2}.\nonumber
\end{flalign}
Since $\tau\mapsto \hat{\mathcal{L}}([E^{\tau \omega}_g])(\xi )$ is holomorphic in $\tau$ and by Lemma \ref{lem:leibniz_rule} the integral
$$\omega\iint_{[a,b]\times [c,d]} \xi (h(x,y))\frac{-xe^{\tau\omega}-y}{xe^{\tau\omega}-y}\frac{e^{\tau\omega}dxdy}{(xe^{\tau\omega}-y)^2}$$ is also holomorphic in $\tau$ , hence by the uniqueness of holomorphic maps we have
\begin{equation}\label{first-derivative-complex}
\frac{d}{d\tau} \hat{\mathcal{L}}([E^{\tau \omega}_g])(\xi )=\omega\iint_{[a,b]\times [c,d]} \xi (h(x,y)) \frac{-xe^{\tau\omega}-y}{xe^{\tau\omega}-y}\frac{e^{\tau\omega}dxdy}{(xe^{\tau\omega}-y)^2}.
\end{equation}
Note that the density $$\frac{e^{\tau\omega}dxdy}{(xe^{\tau\omega}-y)^2}$$ defines a countably additive complex measure on $[a,b]\times [c,d]$ of finite variation.\\
The result follows immediately from Lemma \ref{lem:complex-distance}.
\end{proof}

By Lemma \ref{lem:simple-complex-derivative}, we have
$$
\frac{d}{d\tau}\hat{\mathcal{L}}([E^{\tau\widetilde{\mu}_{n,j}}])(\xi )=\sum_{i=1}^{p(n,j)}\omega_i\int_{G(\widetilde{X})}\xi (h)\cosh \bm{d}(E^{\tau\widetilde{\mu}_{n,j}}(g_i),E^{\tau\widetilde{\mu}_{n,j}}(h))dL_{[E^{\tau\widetilde{\mu}_{n,j}}]}(h),
$$
where $\widetilde{\mu}_{n,j}=\sum_{i=1}^{p(n,j)} \omega_i (\mathbf{1}_{g_i}+\mathbf{1}_{\widetilde{g}_i})$. 
The above formula can be written as
$$
\frac{d}{d\tau}\hat{\mathcal{L}}([E^{\tau\widetilde{\mu}_{n,j}}])(\xi )=\int_{G(\widetilde{X})}\int_{G(\widetilde{X})}\xi (h)\cosh \bm{d}(E^{\tau\widetilde{\mu}_{n,j}}(g),E^{\tau\widetilde{\mu}_{n,j}}(h))dL_{[E^{\tau\widetilde{\mu}_{n,j}}]}(h)d\widetilde{\mu}_{n,j}(g).
$$
By Remark \ref{rem:continuity}, $g \mapsto \int_{G(\widetilde{X})}\xi (h)\cosh \bm{d}(E^{\tau\widetilde{\mu}_{n,j}}(g),E^{\tau\widetilde{\mu}_{n,j}}(h))dL_{[E^{\tau\widetilde{\mu}_{n,j}}]}(h)$ is a continuous function in $g$. Since $\widetilde{\mu}_{n,j}$ converges in the weak* topology to $\widetilde{\mu}|_{K_j}$ as $n\to\infty$, it follows that
$$
\lim_{n\to\infty} 
\frac{d}{d\tau}\hat{\mathcal{L}}([E^{\tau\widetilde{\mu}_{n,j}}])(\xi )=\int_{G(\widetilde{X})}\int_{G(\widetilde{X})}\xi (h)\cosh \bm{d}(E^{\tau\widetilde{\mu}|_{K_j}}(g),E^{\tau\widetilde{\mu}|_{K_j}}(h))dL_{[E^{\tau\widetilde{\mu}|_{K_j}}]}(h)d{\widetilde\mu}|_{K_j}(g).
$$ By Theorem \ref{thm:derivative-limit}, we obtain a formula for the derivative of the Liouville H\"older distributions at a point of the Quasi-Fuachsian space corresponding to a quake-bend with small imaginary parts. Note that the convergence of the formula is conditional (given by a limit) unlike the convergence at Teichm\"uller space where the convergence is absolute (given by a Lebesgue integral in the measure $\tilde{\mu}$). One would expect that the convergence is conditional due to the fact that the quasicircles do not give (bounded variation complex) measures but rather only  H\"older distributions as in \cite{DongSaric}.

\begin{thm}
	\label{thm:quake-der}
	Let $\xi\in H(\tilde{X})$ and let $\delta >0$ be the radius for which $\hat{\mathcal{L}}([E^{\tau \tilde{\mu}}])(\xi )$ is defined. Then, for $\tau\in\mathbb{C}$ with $|\tau |<\delta$,
	$$
	\frac{d}{d\tau} \hat{\mathcal{L}}([E^{\tau \tilde{\mu}}])(\xi )=\lim_{j\to\infty}\int_{G(\tilde{X})}\int_{G(\tilde{X})}\xi (h)\cosh \bm{d}(E^{\tau\widetilde{\mu}|_{K_j}}(g),E^{\tau\widetilde{\mu}|_{K_j}}(h))dL_{[E^{\tau\widetilde{\mu}|_{K_j}}]}(h)d\tilde{\mu}|_{K_j}(g)
	$$ where $\widetilde{\mu}|_{K_j}\to\widetilde{\mu}$ in the weak* topology as $j\to\infty$ and $\bm{d}$ is the complex distance.
\end{thm}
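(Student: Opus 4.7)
The plan is to bootstrap from the simple quake-bend derivative formula of Lemma \ref{lem:simple-complex-derivative}, pass to finite discrete approximations $\widetilde{\mu}_{n,j}$ by linearity, and then take the two limits in order ($n\to\infty$, then $j\to\infty$) using the weak* convergence machinery already set up together with Theorem \ref{thm:derivative-limit}.

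First, I would apply Lemma \ref{lem:simple-complex-derivative} to each atom of the finite discrete measured lamination $\widetilde{\mu}_{n,j}=\sum_{i=1}^{p(n,j)} \omega_i(\mathbf{1}_{g_i}+\mathbf{1}_{\widetilde{g}_i})$. Since the quake-bend along a finite lamination is a composition of commuting simple quake-bends (in the sense explained in Section 3), the derivative decomposes as a finite sum, which can be rewritten as
$$
\frac{d}{d\tau}\hat{\mathcal{L}}([E^{\tau\widetilde{\mu}_{n,j}}])(\xi)=\int_{G(\widetilde{X})}\int_{G(\widetilde{X})}\xi(h)\cosh\bm{d}\bigl(E^{\tau\widetilde{\mu}_{n,j}}(g),E^{\tau\widetilde{\mu}_{n,j}}(h)\bigr)\,dL_{[E^{\tau\widetilde{\mu}_{n,j}}]}(h)\,d\widetilde{\mu}_{n,j}(g).
$$
This step is essentially formal once Lemma \ref{lem:simple-complex-derivative} is in hand.

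Next, I would take $n\to\infty$ with $j$ fixed. Here the key observation is that the inner integral
$$
F_j(g,\tau):=\int_{G(\widetilde{X})}\xi(h)\cosh\bm{d}\bigl(E^{\tau\widetilde{\mu}|_{K_j}}(g),E^{\tau\widetilde{\mu}|_{K_j}}(h)\bigr)\,dL_{[E^{\tau\widetilde{\mu}|_{K_j}}]}(h)
$$
is a bounded continuous function of $g$ on the compact set $K_j$. The continuity in $g$ follows exactly as in Remark \ref{rem:continuity}, extended to the complex $\cosh\bm{d}$ in place of $\cos$: the holomorphic motion $E^{\tau\widetilde{\mu}|_{K_j}}$ is continuous in its basepoint, and the small-box estimate using the Liouville measure of thin strips handles endpoints lying near $g$. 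Combined with the weak* convergence $\widetilde{\mu}_{n,j}\to\widetilde{\mu}|_{K_j}$ established in Section 4, this gives
$$
\lim_{n\to\infty}\frac{d}{d\tau}\hat{\mathcal{L}}([E^{\tau\widetilde{\mu}_{n,j}}])(\xi)=\int_{G(\widetilde{X})}\int_{G(\widetilde{X})}\xi(h)\cosh\bm{d}\bigl(E^{\tau\widetilde{\mu}|_{K_j}}(g),E^{\tau\widetilde{\mu}|_{K_j}}(h)\bigr)\,dL_{[E^{\tau\widetilde{\mu}|_{K_j}}]}(h)\,d\widetilde{\mu}|_{K_j}(g).
$$

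Finally, to get the outer limit, I would invoke Theorem \ref{thm:derivative-limit} with $k=1$, which precisely asserts that
$$
\frac{d}{d\tau}\hat{\mathcal{L}}([E^{\tau\widetilde{\mu}}])(\xi)=\lim_{j\to\infty}\Bigl[\lim_{n\to\infty}\frac{d}{d\tau}\hat{\mathcal{L}}([E^{\tau\widetilde{\mu}_{n,j}}])(\xi)\Bigr].
$$
Plugging in the inner limit from the previous step yields the claimed formula.

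The main obstacle I anticipate is not in the chain of limits itself—each link is already provided by earlier results—but in verifying that the continuity argument of Remark \ref{rem:continuity} indeed carries through to the complex setting where $\cos(g,h)$ is replaced by $\cosh\bm{d}(E^{\tau\widetilde{\mu}|_{K_j}}(g),E^{\tau\widetilde{\mu}|_{K_j}}(h))$ and the real Liouville measure is replaced by the pulled-back complex Liouville H\"older distribution $L_{[E^{\tau\widetilde{\mu}|_{K_j}}]}$. Unlike the real case in Theorem \ref{thm:earthquake-derivative}, one cannot invoke absolute integrability of the inner function against $\widetilde{\mu}$ (this is precisely why the outer limit remains conditional, as noted after Lemma \ref{lem:simple-complex-derivative}); fortunately, the $j$-limit is handled not by dominated convergence but by the holomorphic approximation of Theorem \ref{thm:derivative-limit}, so the conditional nature of the double integral does not obstruct the proof.
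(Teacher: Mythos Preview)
Your proposal is correct and follows essentially the same route as the paper: apply Lemma~\ref{lem:simple-complex-derivative} to each atom of $\widetilde{\mu}_{n,j}$ and rewrite as a double integral, use the continuity argument of Remark~\ref{rem:continuity} together with weak* convergence $\widetilde{\mu}_{n,j}\to\widetilde{\mu}|_{K_j}$ to pass to the limit in $n$, and then invoke Theorem~\ref{thm:derivative-limit} with $k=1$ for the limit in $j$. Your closing remarks on why the $j$-limit must remain conditional (and why this is harmless here) match the paper's own commentary preceding the theorem.
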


\section{A geometric formula for the second derivative of the Liouville measure}

By taking derivatives of the equations \eqref{first-derivative-real} and  \eqref{first-derivative-complex}, we obtain
\begin{cor}
	\label{cor:simple-second-derivative}
	Let $\xi\in H(\widetilde{X})$ and $g\in G(\widetilde{X})$ be a fixed geodesic. Then, for $t\in\mathbb{R}$ and $\omega >0$,
	$$\frac{d^2}{dt^2}{\mathcal{L}}([E^{t{\omega}}_g])(\xi ) \Big{|}_{t=0} = {\omega^2} \int_{G(\widetilde{X})}\xi (h)[\cos^2 (g, h) -  \frac{1}{2} \sin^2 (g, h)]dL_{\widetilde{X}}(h).$$
\end{cor}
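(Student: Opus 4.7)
My plan is to obtain the corollary by differentiating the explicit integral formula for the first derivative established inside the proof of Lemma \ref{lem:simple-complex-derivative}, then re-expressing the resulting rational integrand in the geometric quantities $\cos(g,h)$ and $\sin(g,h)$.

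Concretely, I would identify $\widetilde{X}$ with $\HH$ so that $g$ is the positive $y$-axis, and reduce (via a partition of unity) to the case where $\mathrm{Supp}(\xi)$ lies in a sub-box $[a,b]\times[c,d]$ of geodesics all crossing $g$. Starting from equation \eqref{first-derivative-real},
$$
\frac{d}{dt}{\mathcal{L}}([E^{t \omega}_g])(\xi) = \omega\iint_{[a,b]\times[c,d]} \xi(h(x,y))\,\frac{-xe^{t\omega}-y}{xe^{t\omega}-y}\,\frac{e^{t\omega}}{(xe^{t\omega}-y)^2}\,dx\,dy,
$$
I would invoke the Leibniz rule (Lemma \ref{lem:leibniz_rule}) a second time to differentiate under the integral in $t$. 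The justification is routine: since $[a,b]\cap[c,d]=\emptyset$, the denominators $xe^{t\omega}-y$ stay bounded below by a positive constant uniformly for $t$ in a compact real interval, so the $t$-derivative of the integrand is dominated by a constant multiple of $\|\xi\|_\infty$ on a set of finite Lebesgue measure.

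After differentiating, I would set $t=0$. A direct computation (write $u=xe^{t\omega}$, so $\dot u=\omega u$; then differentiate the rational expression $\tfrac{-u(u+y)}{x(u-y)^3}$ in $u$ and use the chain rule) yields
$$
\frac{d^2}{dt^2}{\mathcal{L}}([E^{t\omega}_g])(\xi)\bigg|_{t=0} = \omega^{2} \iint_{[a,b]\times[c,d]} \xi(h(x,y))\,\frac{x^{2}+4xy+y^{2}}{(x-y)^{2}}\,\frac{dx\,dy}{(x-y)^{2}}.
$$
Finally I would recognize the geometric content. Since $\cos(g,h)=\frac{-x-y}{x-y}$, one has
$$
\cos^{2}(g,h)=\frac{x^{2}+2xy+y^{2}}{(x-y)^{2}},\qquad \sin^{2}(g,h)=1-\cos^{2}(g,h)=\frac{-4xy}{(x-y)^{2}},
$$
(the latter being non-negative precisely when $g$ and $h$ intersect, i.e.\ $xy<0$). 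A one-line algebraic check gives
$$
\cos^{2}(g,h)-\tfrac{1}{2}\sin^{2}(g,h)=\frac{x^{2}+4xy+y^{2}}{(x-y)^{2}},
$$
which, together with $\frac{dx\,dy}{(x-y)^{2}}=dL_{\widetilde X}(h)$, yields the stated formula.

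I do not expect a real obstacle: the only subtlety is the application of the Leibniz rule to the complex-valued integrand, and this is already handled in essentially the same way as in Lemma \ref{lem:simple-complex-derivative}. Alternatively, one could differentiate equation \eqref{first-derivative-complex} holomorphically in $\tau$ and restrict to real $\tau=t$; both routes produce the same rational integrand, and the trigonometric identification above is the substantive step.
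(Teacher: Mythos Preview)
Your proposal is correct and follows essentially the same approach as the paper, which simply states that the corollary is obtained by differentiating equations \eqref{first-derivative-real} and \eqref{first-derivative-complex}. Your explicit computation of the rational integrand $\frac{x^{2}+4xy+y^{2}}{(x-y)^{2}}$ and its identification with $\cos^{2}(g,h)-\tfrac{1}{2}\sin^{2}(g,h)$ supplies precisely the details the paper omits.
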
	
\begin{cor}
\label{cor:simple-complex-second-derivative}
	Let $\xi \in H(\widetilde X)$ and $g \in G(\widetilde X)$ be a fixed geodesic. Let $\delta>0$ be the radius for which the quake-bend is defined. Then, for $\tau \in \C$ with $|\tau|<\delta$ and $\omega>0$, 
	
	$$\frac{d^2}{d\tau^2}\hat{\mathcal{L}}([E^{\tau{\omega}}_g])(\xi ) = {\omega^2} \int_{G(\widetilde{X})}\xi (h)[\cosh^2 \bm{d}(E^{\tau\omega}_g(g),E^{\tau\omega}_g(h)) -  \frac{1}{2} \sinh^2 \bm{d}(E^{\tau\omega}_g(g), E^{\tau\omega}_g(h))]dL_{[E^{\tau\omega}_g]}(h).$$ where $E^{\tau\omega}_g(g)=g$ and $\bm{d}$ is the complex distance.
\end{cor}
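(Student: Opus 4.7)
The plan is to differentiate equation \eqref{first-derivative-complex} once more in $\tau$, then repackage the rational function that appears using the identity for $\cosh\bm{d}$ provided by Lemma \ref{lem:complex-distance}. Following the setup in the proof of Lemma \ref{lem:simple-complex-derivative}, I would identify $\widetilde{X}$ with $\HH$ so that $g$ is the positive $y$-axis and, via a partition of unity, reduce to the case where $\xi$ is supported in a single box of geodesics $[a,b]\times[c,d]$ whose elements all intersect $g$ (so $[a,b]$ and $[c,d]$ lie on opposite sides of $0$).

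Since the support is compact and bounded away from the diagonal, the denominator $(xe^{\tau\omega}-y)^{2}$ stays uniformly bounded away from zero on any closed sub-disk $|\tau|\leq\delta'<\delta$; hence every finite-order $\tau$-derivative of the integrand in \eqref{first-derivative-complex} is dominated by an integrable function on $[a,b]\times[c,d]$. Lemma \ref{lem:leibniz_rule} therefore permits interchanging $\frac{d}{d\tau}$ with the integral. Setting $u=xe^{\tau\omega}$ so that $\partial u/\partial\tau=\omega u$, an elementary quotient-rule computation yields
$$
\frac{\partial}{\partial\tau}\left[\frac{(-xe^{\tau\omega}-y)\,e^{\tau\omega}}{(xe^{\tau\omega}-y)^{3}}\right]=\omega\,e^{\tau\omega}\cdot\frac{P(u,y)}{(u-y)^{4}},
$$
where $P(u,y)$ is an explicit homogeneous quadratic in $(u,y)$. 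Combined with the outer factor of $\omega$ in \eqref{first-derivative-complex}, this gives
$$
\frac{d^{2}}{d\tau^{2}}\hat{\mathcal{L}}([E^{\tau\omega}_{g}])(\xi)=\omega^{2}\iint_{[a,b]\times[c,d]}\xi(h(x,y))\cdot\frac{P(u,y)}{(u-y)^{2}}\cdot\frac{e^{\tau\omega}\,dx\,dy}{(u-y)^{2}}.
$$

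The final step is to recognize the three factors geometrically. By Lemma \ref{lem:complex-distance}, $E^{\tau\omega}_{g}(h)$ has endpoints $u$ and $y$, so $\cosh\bm{d}(E^{\tau\omega}_{g}(g),E^{\tau\omega}_{g}(h))=\frac{-u-y}{u-y}$, whence $\cosh^{2}\bm{d}=\frac{(u+y)^{2}}{(u-y)^{2}}$ and, using $\sinh^{2}=\cosh^{2}-1$, also $\sinh^{2}\bm{d}=\frac{4uy}{(u-y)^{2}}$. Moreover, the change of variables used to derive \eqref{first-derivative-complex} simultaneously identifies $\frac{e^{\tau\omega}\,dx\,dy}{(u-y)^{2}}$ with the pulled-back Liouville density $dL_{[E^{\tau\omega}_{g}]}(h)$ on the box of geodesics crossing $g$. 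Substituting these three identities and verifying the elementary polynomial identity that expresses $P(u,y)/(u-y)^{2}$ as the combination $\cosh^{2}\bm{d}-\tfrac{1}{2}\sinh^{2}\bm{d}$ completes the proof. The only real obstacle is this last algebraic matching; the rest is the Leibniz rule plus the identification of the Liouville density already used in the first-derivative argument. The companion Corollary \ref{cor:simple-second-derivative} follows by the same computation specialized to real $\tau=t$, noting that $\bm{d}=i\theta$ reduces $\cosh$ and $\sinh$ to $\cos$ and $i\sin$.
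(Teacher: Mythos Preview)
Your approach is exactly what the paper intends: the paper's entire proof is the sentence ``by taking derivatives of the equations \eqref{first-derivative-real} and \eqref{first-derivative-complex}'', and you have correctly supplied the missing details---the domination argument justifying Lemma~\ref{lem:leibniz_rule}, the identification of $\frac{e^{\tau\omega}\,dx\,dy}{(u-y)^{2}}$ with $dL_{[E^{\tau\omega}_{g}]}(h)$, and the use of Lemma~\ref{lem:complex-distance} to rewrite the rational factor.

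One caution on the final ``elementary polynomial identity'', which will not check out as written. Differentiating the integrand of \eqref{first-derivative-complex} gives $P(u,y)=u^{2}+4uy+y^{2}$, while from $\cosh\bm{d}=\frac{-u-y}{u-y}$ and $\sinh^{2}\bm{d}=\cosh^{2}\bm{d}-1=\frac{4uy}{(u-y)^{2}}$ one finds
\[
\frac{P(u,y)}{(u-y)^{2}}=\frac{(u+y)^{2}+2uy}{(u-y)^{2}}=\cosh^{2}\bm{d}+\tfrac{1}{2}\sinh^{2}\bm{d},
\]
with a \emph{plus} sign. Your own closing remark detects this: for real $\tau$ the geodesics intersect, $\bm{d}=i\theta$, and $\sinh^{2}\bm{d}=-\sin^{2}(g,h)$, so $\cosh^{2}\bm{d}+\tfrac{1}{2}\sinh^{2}\bm{d}$ specialises to $\cos^{2}-\tfrac{1}{2}\sin^{2}$, in agreement with Corollary~\ref{cor:simple-second-derivative}, whereas the combination $\cosh^{2}\bm{d}-\tfrac{1}{2}\sinh^{2}\bm{d}$ would give $\cos^{2}+\tfrac{1}{2}\sin^{2}$. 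The method is sound; the sign in the stated corollary appears to be a slip.
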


However, the above two corollaries do not provide the second derivatives in the case when the measures are finite combinations of Dirac measures due to the interactions of different terms.
We would like to find the formula for the second derivative of the Liouville measure along an elementary earthquake, i.e. an earthquake whose measure is a finite sum of Dirac measures. Since the full computations are much more involved, we point out that the second derivative captures all pairwise interactions of the supporting geodesics in $|\widetilde{\mu}_{n,j}|$.

To see this, let $E^{t\sigma} = E_{g_i}^{t\omega_i} \circ E_{g_k}^{t\omega_k}$ be an elementary earthquake with measured lamination $\sigma$ supported on two disjoint geodesics $g_i$ and $g_k$ as defined in section 3. Suppose that $g_i = (0, \infty)$, $g_k = (\alpha, \beta)$ and $h(x, y)$ is a geodesic intersecting both $g_i$ and $g_k$. Define a M\"obius map $\varphi(z) = \frac{\alpha - z}{z - \beta}$ which sends $\alpha$ to 0 and $\beta$ to $\infty$. By definition,
$E_{g_i}^{t\omega_i} = 
\begin{cases}
	x \quad \quad \text{ if } x \leq 0\\
	e^{t\omega_i}x \ \ \text{ if } x>0.
\end{cases}$ 

For $t \in \R$, we have 
\begin{flalign*}
	\frac{d}{dt}\mathcal{L}([E^{t{\sigma}}_g])(\xi) \bigg\vert_{t=0}
	&=\frac{d}{dt} \int_{G(\widetilde{X})} \xi \circ (E^{t\sigma})^{-1}(h(x, y)) \frac{dxdy}{(x-y)^2}\bigg\vert_{t=0}\\
	&=\frac{d}{dt} \int_{G(\widetilde{X})} \xi \circ \varphi^{-1} \circ \varphi \circ (E_{g_k}^{t\omega_k})^{-1} \circ \varphi^{-1} \circ \varphi(h(e^{-t\omega_i}x, y))\frac{dxdy}{(x-y)^2}\bigg\vert_{t=0}\\
	&=\frac{d}{dt} \int_{G(\widetilde{X})} \xi \circ \varphi^{-1} \circ (E_{\varphi(g_k)}^{t\omega_k})^{-1} \circ \varphi(h(e^{-t\omega_i}x, y))\frac{dxdy}{(x-y)^2}\bigg\vert_{t=0}\\
	&=\int_{G(\widetilde{X})} \xi(h) \bigg[\omega_i \cos(g_i, h) + \omega_k \cos(g_k, h)\bigg] dL_{\widetilde X}(h)
\end{flalign*} and
\begin{flalign*}
	\frac{d^2}{dt^2}\mathcal{L}([E^{t{\sigma}}_g])(\xi) \bigg\vert_{t=0}
	&=\int_{G(\widetilde{X})} \xi(h)\\
	&\qquad\qquad \bigg[\omega_i^2 \big[\cos^2(g_i, h) - \frac{1}{2} \sin^2(g_i, h)\big] + \omega_k^2 \big[\cos^2(g_k, h) - \frac{1}{2} \sin^2(g_k, h)\big]\\
	&\qquad\qquad + \omega_i \omega_k \big[\cos(g_i, h)\cos(g_k, h) - \frac{1}{2} \sin(g_i, h)\sin(g_k, h) e^{-d_h}\big]\bigg] dL_{\widetilde X}(h)
\end{flalign*} where $d_h$ is the hyperbolic distance along $h$ from $g_i \cap h$ to $g_k \cap h$ and $e^{-d_h}$ is obtained by some hyperbolic geometry considerations.

Recall that $\widetilde{\mu}_{n,j}=\sum_{i=1}^{p(n,j)} \omega_i (\mathbf{1}_{g_i}+\mathbf{1}_{\widetilde{g}_i})$ are measured laminations with finite support as previously defined. Let $\xi \in H(\widetilde X)$ and for $t \in \R$, we have
\begin{flalign*}
	\frac{d^2}{dt^2}{\mathcal{L}}([E^{t\widetilde{\mu}_{n,j}}])(\xi ) \Big{|}_{t=0}
	&= \int_{G(\widetilde{X})}\xi (h)\\
	&\quad \bigg[\sum_{i,k=1}^{p(n,j)} \omega_i \omega_k \big[\cos(g_i, h)\cos(g_k, h) - \frac{1}{2} \sin(g_i, h)\sin(g_k, h) e^{-d_h}\big]\bigg]dL_{\widetilde X}(h)\\
	&= \int_{G(\widetilde{X})} \int_{G(\widetilde{X})} \\
	&\quad \bigg\{\int_{G(\widetilde{X})}\xi (h) \big[\cos(g_i, h)\cos(g_k, h) - \frac{1}{2} \sin(g_i, h)\sin(g_k, h) e^{-d_h}\big]dL_{\widetilde X}(h)\bigg\}\\
	&\quad d\widetilde{\mu}_{n,j}(g_i) d\widetilde{\mu}_{n,j}(g_k)
\end{flalign*} where $d_h$ is the hyperbolic distance along $h$ from $g_i \cap h$ to $g_k \cap h$. Using similar arguments as in Remark \ref{rem:continuity}, $(g_i, g_k) \mapsto \int_{G(\widetilde{X})}\xi (h) \big[\cos(g_i, h)\cos(g_k, h) - \frac{1}{2} \sin(g_i, h)\sin(g_k, h) e^{-d_h}\big]dL_{\widetilde X}(h)$ is a continuous function in $(g_i, g_k)$. Since $\widetilde{\mu}_{n,j}$ converges in the weak* topology to $\widetilde{\mu}|_{K_j}$ as $n \to \infty$, similar to the first derivative it follows that
\begin{align*}
	&\lim_{n \to \infty} \int_{G(\widetilde{X})} \int_{G(\widetilde{X})} \\
	&\quad \quad \bigg\{\int_{G(\widetilde{X})}\xi (h) \big[\cos(g_i, h)\cos(g_k, h) - \frac{1}{2} \sin(g_i, h)\sin(g_k, h) e^{-d_h}\big]dL_{\widetilde X}(h)\bigg\} d\widetilde{\mu}_{n,j}(g_i) d\widetilde{\mu}_{n,j}(g_k) \\
	&= \int_{G(\widetilde{X})} \int_{G(\widetilde{X})} \\
	&\quad \quad \bigg\{\int_{G(\widetilde{X})}\xi (h) \big[\cos(g_i, h)\cos(g_k, h) - \frac{1}{2} \sin(g_i, h)\sin(g_k, h) e^{-d_h}\big]dL_{\widetilde X}(h)\bigg\} d\widetilde{\mu}|_{K_j}(g_i) d\widetilde{\mu}|_{K_j}(g_k).
\end{align*} By Theorem \ref{thm:derivative-limit}, we have that 
\begin{align*}
	\frac{d^2}{dt^2}{\mathcal{L}}([E^{t\widetilde{\mu}}])(\xi ) \Big{|}_{t=0}
	&= \lim_{j \to \infty} \int_{G(\widetilde{X})} \int_{G(\widetilde{X})} \\
	&\quad \bigg\{\int_{G(\widetilde{X})}\xi (h) \big[\cos(g_i, h)\cos(g_k, h) - \frac{1}{2} \sin(g_i, h)\sin(g_k, h) e^{-d_h}\big]dL_{\widetilde X}(h)\bigg\}\\
	&\quad d\widetilde{\mu}|_{K_j}(g_i) d\widetilde{\mu}|_{K_j}(g_k).
\end{align*}

 To simplify the notation, for the rest of the paper, we denote $A \lesssim B$ if $A \leq CB$ for some universal constant $C$. 
And now we prove
\begin{thm}
	\label{thm:earthquake-second-derivative}
	Let $\mu$ be a bounded measured lamination on a conformally hyperbolic Riemann surface $X$ and let $\widetilde{\mu}$ be its lift to the universal covering $\widetilde{X}$. Then
	\begin{flalign*}
		\frac{d^2}{dt^2}{\mathcal{L}}([E^{t\widetilde{\mu}}])(\xi ) \Big{|}_{t=0}
		&=\int_{G(\widetilde{X})} \int_{G(\widetilde{X})} \\
		&\quad \bigg\{\int_{G(\widetilde{X})}\xi (h) \big[\cos(g, h)\cos(g', h) - \frac{1}{2} \sin(g, h)\sin(g', h) e^{-d_h}\big]dL_{\widetilde 	X}(h)\bigg\}\\
		&\quad d\widetilde{\mu}(g) d\widetilde{\mu}(g')
	\end{flalign*} where $d_h$ is the hyperbolic distance along $h$ from $g \cap h$ to $g' \cap h$.
\end{thm}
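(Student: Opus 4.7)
The plan is to mimic the dominated-convergence argument used in the proof of Theorem \ref{thm:earthquake-derivative}. The computation immediately preceding the statement already establishes
\[
\frac{d^2}{dt^2}\mathcal{L}([E^{t\widetilde{\mu}}])(\xi)\Big|_{t=0}
=\lim_{j\to\infty}\iint K(g,g')\,d\widetilde{\mu}|_{K_j}(g)\,d\widetilde{\mu}|_{K_j}(g'),
\]
where $K(g,g')=\int_{G(\widetilde{X})}\xi(h)\bigl[\cos(g,h)\cos(g',h)-\tfrac{1}{2}\sin(g,h)\sin(g',h)e^{-d_h}\bigr]dL_{\widetilde{X}}(h)$. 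Since $\widetilde{\mu}|_{K_j}\otimes\widetilde{\mu}|_{K_j}$ is simply the restriction of $\widetilde{\mu}\otimes\widetilde{\mu}$ to the exhausting sequence $K_j\times K_j$, dominated convergence will identify this limit with the double integral $\iint K(g,g')\,d\widetilde{\mu}(g)\,d\widetilde{\mu}(g')$ in the statement of the theorem as soon as we verify the absolute integrability
\[
\iint_{G(\widetilde{X})\times G(\widetilde{X})}|K(g,g')|\,d\widetilde{\mu}(g)\,d\widetilde{\mu}(g')<\infty.
\]

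By a partition of unity on $\xi$ I would then reduce to the case where $\xi$ is supported in a single box of geodesics $[a,b]\times[c,d]$. Geometrically, for $g\in\mathcal{F}_n$ and $g'\in\mathcal{F}_m$ (the annular subfamilies from the proof of Theorem \ref{thm:earthquake-derivative}) the endpoints of $g$ lie in arcs of $\partial\widetilde{X}$ of length of order $e^{-n}$, and similarly for $g'$ at order $e^{-m}$. Using $e^{-d_h}\le 1$ (together with Remark \ref{rem:bound-deriv-sin} to handle the $\sin$-$\sin$ term), and the fact that the products $\cos(g,h)\cos(g',h)$ and $\sin(g,h)\sin(g',h)$ are supported on those $h=(x,y)$ intersecting \emph{both} $g$ and $g'$---i.e.\ $x$ is constrained to a subinterval of length $\sim e^{-n}$ and $y$ to one of length $\sim e^{-m}$---one immediately obtains the crude bound $|K(g,g')|\lesssim \|\xi\|_{\infty}\,e^{-d_g-d_{g'}}$.

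The main obstacle is that this crude bound is insufficient: combined with the counting estimate $\widetilde{\mu}(\mathcal{F}_n)\lesssim e^n$ from Theorem \ref{thm:earthquake-derivative} it only yields the divergent sum $\sum_{n,m}e^{n+m}\cdot e^{-n-m}=\infty$. The needed extra decay must come from the $\lambda$-H\"older continuity of $\xi$ together with its vanishing on the boundary of $\mathrm{supp}(\xi)$, exactly in the spirit of Lemma \ref{lem:bound-deriv}. Choosing inside the admissible rectangle for the $h$-endpoints a reference point on the boundary of $\mathrm{supp}(\xi)$ at which $\xi$ vanishes and applying H\"older continuity gives an extra factor of $\lambda$-th power decay; in the model case where both $g$ and $g'$ are anchored near a corner of $\mathrm{supp}(\xi)$ (so that $\xi$ vanishes along two adjacent sides) one obtains the refined estimate
\[
|K(g,g')|\;\lesssim\;e^{-(1+\lambda)\max(d_g,d_{g'})-\min(d_g,d_{g'})},
\]
which summed against $\widetilde{\mu}\otimes\widetilde{\mu}$ over the annular products $\mathcal{F}_n\times\mathcal{F}_m$ is dominated by a convergent series of shape $\sum_n (n+1)e^{-\lambda n}<\infty$, delivering the desired absolute integrability.

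The most delicate step is running this refined estimate uniformly in $(g,g')$, including the off-corner cases in which the relevant endpoints of $g$ or $g'$ lie in the interior of $[a,b]$ or $[c,d]$ and the boundary vanishing of $\xi$ only provides H\"older decay in a single coordinate. As in the proof of Lemma \ref{lem:bound-deriv}, one handles this by iterating the H\"older trick: first apply the one-variable estimate to the $x$-integral, with effective integrand $\xi(\cdot,y)\cos(g',h(\cdot,y))$ (or $\xi(\cdot,y)\sin(g',h(\cdot,y))e^{-d_h}$) against $dx/(x-y)^2$ and a reference point in $x$ where that effective integrand vanishes; then apply the one-variable estimate again to the resulting function in $y$, picking up a second factor of H\"older decay. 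Once this technical bookkeeping is in place, the absolute integrability above follows, dominated convergence justifies exchanging the limit in $j$ with both outer integrals, and the formula for the second derivative of $\mathcal{L}([E^{t\widetilde{\mu}}])(\xi)$ at $t=0$ follows.
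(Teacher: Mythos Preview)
Your overall strategy---reduce to absolute integrability of $|K(g,g')|$ against $\widetilde{\mu}\otimes\widetilde{\mu}$ and then invoke dominated convergence---is exactly the paper's strategy. The gap is in the pointwise estimate you try to prove. The refined bound
\[
|K(g,g')|\;\lesssim\;e^{-(1+\lambda)\max(d_g,d_{g'})-\min(d_g,d_{g'})}
\]
is stronger than what is actually available, and your justification for it does not hold up. Consider the generic situation where $g\in\mathcal{F}_n$, $g'\in\mathcal{F}_m$ with $m\le n$ are nested (so $g'$ separates $g$ from the origin). Then every $h$ in the support of $\xi$ that meets $g$ automatically meets $g'$, and the integral defining $K(g,g')$ is over the \emph{same} set of $h$'s as the single-geodesic integral for $g$; bounding $|\cos(g',h)|\le 1$ gives only $|K(g,g')|\lesssim e^{-(1+\lambda)n}$, with no extra $e^{-m}$ factor. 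Your ``$x$ constrained to length $e^{-n}$, $y$ constrained to length $e^{-m}$'' picture only applies when the endpoints of $g$ and $g'$ fall on \emph{opposite} sides of the box, and your iterated-H\"older sketch for the remaining cases does not produce the claimed decay.

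The paper avoids this entirely. It uses only the easy bound
\[
I(g,g')\;:=\;|K(g,g')|\;\le\;C\,e^{-(1+\lambda)\max(d_g,d_{g'})},
\]
obtained from Lemma~\ref{lem:bound-deriv} (and Remark~\ref{rem:bound-deriv-sin}) by bounding the extra factor $|\cos(g',h)|$, respectively $|\sin(g',h)e^{-d_h}|$, by $1$. On its own this is not summable over all pairs of subfamilies, as you correctly observed. The missing idea is a combinatorial constraint you never use: since $K(g,g')\neq 0$ forces some $h\in\mathrm{supp}(\xi)$ to cross both $g$ and $g'$, and $g,g'\in|\widetilde{\mu}|$ are disjoint, once $g$ is fixed in a subfamily $\mathcal{F}_n^i$ the geodesic $g'$ can lie in at most two subfamilies of $\mathcal{F}_m$ for each $m$. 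Splitting the $g'$-sum at $m=n$ and using the bound $e^{-(1+\lambda)n}$ for $m\le n$ and $e^{-(1+\lambda)m}$ for $m>n$ then gives $\int_{G(\widetilde X)}\int_{\mathcal{F}_n}I(g,g')\,d\widetilde\mu\,d\widetilde\mu\lesssim n e^{-\lambda n}+e^{-\lambda n}$, which sums over $n$. This replaces your attempted refined pointwise bound by a (correct) restriction on which pairs $(g,g')$ contribute at all.
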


	To simplify the notation, set 
	$$I(g, g')=\bigg| \int_{G(\widetilde{X})}\xi (h) \big[\cos(g, h)\cos(g', h) - \frac{1}{2} \sin(g, h)\sin(g', h) e^{-d_h}\big]dL_{\widetilde X}(h) \bigg|.$$
	In particular, $I(g, g')=0$ when $h$ intersects only $g$ or $g'$ but not both since then we have $\cos(g, h)=\sin(g, h)=0$ or $\cos(g', h)=\sin(g', h)=0$. 
	When $h$ intersects both $g$ and $g'$, we obtain an upper bound for $I(g, g')$ as follows:
	\begin{align*}
		I(g, g')
		&=\bigg| \int_{G(\widetilde{X})}\xi (h) \big[\cos(g, h)\cos(g', h) - \frac{1}{2} \sin(g, h)\sin(g', h) e^{-d_h}\big]dL_{\widetilde X}(h) \bigg|\\
		&\leq \bigg| \int_{G(\widetilde{X})}\xi (h) \cos(g, h)\cos(g', h) dL_{\widetilde X}(h)\bigg| + \bigg| \int_{G(\widetilde{X})}\xi (h) \frac{1}{2} \sin(g, h)\sin(g', h) e^{-d_h} dL_{\widetilde X}(h) \bigg|\\
		&\leq min\{Ce^{-(1+\lambda)d_g}, C'e^{-(1+\lambda)d_{g'}}\} \qquad \text{(by Lemma \ref{lem:bound-deriv} and Remark \ref{rem:bound-deriv-sin})}
	\end{align*} 
	
	It is enough to show that 
	$$\int_{G(\widetilde{X})} \int_{G(\widetilde{X})} I(g, g') d\widetilde{\mu}(g) d\widetilde{\mu}(g') < \infty.$$
	We need the following estimate.
	\begin{lem}\label{lem:single-integral-estimate}
		Let $\mathcal{F}_n$ be defined as in the proof of Theorem \ref{thm:earthquake-derivative} and $I(g, g')$ be defined as above. Then $$\int_{G(\widetilde{X})} \int_{\mathcal{F}_n} I(g, g') d\widetilde{\mu}(g)d\widetilde{\mu}(g') \lesssim n e^{-\lambda n} + e^{-\lambda n},$$ where the universal constant in $\lesssim$ is the maximum of the universal constants for $ne^{-\lambda n}$ and $e^{-\lambda n}$ respectively.
	\end{lem}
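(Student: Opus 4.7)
The plan is to combine a refined pointwise bound on $|I(g, g')|$ with the subfamily decomposition of the shells $\mathcal{F}_n, \mathcal{F}_m$ already used in the proof of Theorem \ref{thm:earthquake-derivative}. Identify $\widetilde X$ with $\mathbb D$ and, using a partition of unity, reduce to the case where $\mathrm{supp}(\xi)$ lies in a single box of geodesics $[a, b] \times [c, d]$. Since $I(g, g') \neq 0$ requires some $h \in \mathrm{supp}(\xi)$ to cross both $g$ and $g'$, the effective integration in the formula defining $I(g, g')$ ranges over $h_1$ in the short arc of $g$ (of length $\lesssim e^{-d_g}$) and $h_2$ in the short arc of $g'$ (of length $\lesssim e^{-d_{g'}}$), which is the key geometric input for the refinement.

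For the refined pointwise bound, I would, for each fixed $h_2$ in its effective range, apply the slice argument from the proof of Lemma \ref{lem:bound-deriv} to the Hölder-in-$h_1$ integrand $\xi(h_1, h_2) \cos(g', h(h_1, h_2))$; since the extra cosine factor is smooth in $h_1$ with norm controlled uniformly in $h_2$ and $g'$ in the generic configuration, this yields a slice estimate $\lesssim e^{-(1+\lambda) d_g}$. Integrating the slice bound over the $h_2$-range of length $\lesssim e^{-d_{g'}}$ produces $|I(g, g')| \lesssim e^{-(1+\lambda) d_g - d_{g'}}$. The symmetric argument with the roles of $g$ and $g'$ swapped gives $|I(g, g')| \lesssim e^{-d_g - (1+\lambda) d_{g'}}$, and taking the minimum yields
$$
|I(g, g')| \lesssim e^{-(1+\lambda) \max(d_g, d_{g'}) - \min(d_g, d_{g'})}.
$$
The analogous bound for the $\sin$-term of $I(g, g')$ is handled identically via Remark \ref{rem:bound-deriv-sin}.

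Finally, split the outer integral by the shells $\mathcal{F}_m$ of $g'$. From the proof of Theorem \ref{thm:earthquake-derivative}, $\widetilde\mu(\mathcal{F}_n) \lesssim e^n \|\widetilde\mu\|_{Th}$ and likewise $\widetilde\mu(\mathcal{F}_m) \lesssim e^m \|\widetilde\mu\|_{Th}$. Combining the above with the pointwise bound,
\begin{align*}
\int_{G(\widetilde X)} \int_{\mathcal{F}_n} I(g, g') \, d\widetilde\mu(g) \, d\widetilde\mu(g')
&\lesssim \sum_{m \geq 0} e^{n+m} \, e^{-(1+\lambda) \max(n, m) - \min(n, m)}\\
&= \sum_{m \leq n} e^{-\lambda n} + \sum_{m > n} e^{-\lambda m} \lesssim n e^{-\lambda n} + e^{-\lambda n},
\end{align*}
which is the claimed bound. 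The main obstacle is establishing the refined pointwise bound uniformly across all geometric configurations, especially the case in which the short arcs of $g$ and $g'$ lie in the same boundary interval of $\mathrm{supp}(\xi)$ so that only one $h$-coordinate is effectively restricted and the cosine factor varies rapidly across the shorter short arc; in that configuration one must verify either that the alternative slice-in-the-other-variable estimate still gives the needed bound, or that the shrinking of the corresponding subfamily pair count compensates so each $m$-slice of the sum satisfies $\lesssim e^{-\lambda \max(n, m)}$.
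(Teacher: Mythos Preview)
Your proposal has a genuine gap, which you yourself identify at the end: the refined pointwise bound $|I(g,g')|\lesssim e^{-(1+\lambda)\max(d_g,d_{g'})-\min(d_g,d_{g'})}$ is not established. Your slice argument requires that the $h_2$-range be restricted to an arc of length $\lesssim e^{-d_{g'}}$, but this is only true when crossing $g'$ constrains the $h_2$-coordinate; if both endpoints of $g'$ lie in the $h_1$-interval $[a,b]$ (or more generally if the ``short arcs'' of $g$ and $g'$ lie in the same boundary interval of the box), the $h_2$-range is all of $[c,d]$ and you get no extra decay. Without the refined bound your summation collapses: using only $I(g,g')\lesssim e^{-(1+\lambda)\max(n,m)}$ together with $\widetilde\mu(\mathcal F_n)\lesssim e^n$, $\widetilde\mu(\mathcal F_m)\lesssim e^m$ gives $\sum_m e^{n+m-(1+\lambda)\max(n,m)}\asymp e^{(1-\lambda)n}$, which blows up for $\lambda<1$.

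The paper avoids this entirely. It uses only the coarse bound $I(g,g')\leq\min\{Ce^{-(1+\lambda)d_g},C'e^{-(1+\lambda)d_{g'}}\}$, obtained from Lemma~\ref{lem:bound-deriv} by absorbing the extra $|\cos(g',h)|\leq 1$ (resp.\ $|\sin|\leq 1$) into the absolute-value integral. The missing decay is recovered by a geometric observation in the \emph{outer} integral: fix $g$ in a single subfamily $\mathcal F_n^i$; since any $h\in\mathrm{supp}(\xi)$ crossing $g$ has its endpoints confined by $g$, any $g'$ with $I(g,g')\neq 0$ lies in at most two subfamilies of each $\mathcal F_m$. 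Thus the $g'$-integral over $\mathcal F_m$ carries mass $\lesssim\|\widetilde\mu\|_{Th}$ (a constant) rather than $e^m$. Summing $\sum_{m\leq n}1\cdot e^{-(1+\lambda)n}+\sum_{m>n}1\cdot e^{-(1+\lambda)m}$ gives $ne^{-(1+\lambda)n}+e^{-(1+\lambda)n}$ for each $\mathcal F_n^i$, and multiplying by the $\lesssim e^n$ subfamilies yields the lemma. This subfamily-restriction idea is exactly the ``shrinking of the corresponding subfamily pair count'' you allude to as a possible rescue, and it is the whole point of the paper's argument; you should make it the main line rather than a fallback.
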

	\begin{proof}
		We identify $\widetilde{X}$ with $\mathbb{D}$ by an isometry. As in the proof of Theorem \ref{thm:earthquake-derivative}, let $C_n=\{ z\in\mathbb{D}:n<\rho_{\mathbb{D}}(0,z)\leq n+1\}$ be the half-closed annulus around $0$. For each $n\geq 2$, let  $\mathcal{F}_n$ be the family of geodesics of the support $|\widetilde\mu |$ of $\widetilde\mu$ that intersect $C_n$ but do not intersect $C_{n-1}$. We partition $\mathcal{F}_n$  into finitely many subfamilies.
		
		We begin our estimate with a single subfamily $\mathcal{F}_n^i$ of $\mathcal{F}_n$ for the inner integral. Suppose that $g \in \mathcal{F}_n^i$. Since $h$ intersects both $g$ and $g'$, then $g'$ belongs to at most $2$ subfamilies of $\mathcal{F}_m$, see Figure \ref{figure:Subfamilies_for_two_geodesics}. For indexing purpose, we denote them by $\mathcal{F}_{m}^j$ and $\mathcal{F}_{m}^k$. Thus,
		\begin{align*}
			&\quad \int_{G(\widetilde{X})} \int_{\mathcal{F}_n^i} I(g, g') d\widetilde{\mu}(g)d\widetilde{\mu}(g')\\
			&=\sum_{m=1}^{\infty} \int_{\mathcal{F}_{m}^j + \mathcal{F}_{m}^k} \int_{\mathcal{F}_n^i} I(g, g') d\widetilde{\mu}(g)d\widetilde{\mu}(g')\\
			&=\sum_{m=1}^{n} \int_{\mathcal{F}_{m}^j + \mathcal{F}_{m}^k} \int_{\mathcal{F}_n^i} I(g, g') d\widetilde{\mu}(g)d\widetilde{\mu}(g') + \sum_{m=n+1}^{\infty} \int_{\mathcal{F}_{m}^j + \mathcal{F}_{m}^k} \int_{\mathcal{F}_n^i} I(g, g') d\widetilde{\mu}(g)d\widetilde{\mu}(g')\\
			&= A + B
		\end{align*} 
		\begin{figure}[h!]
			\centerline{
				\includegraphics[width=0.3\textwidth]{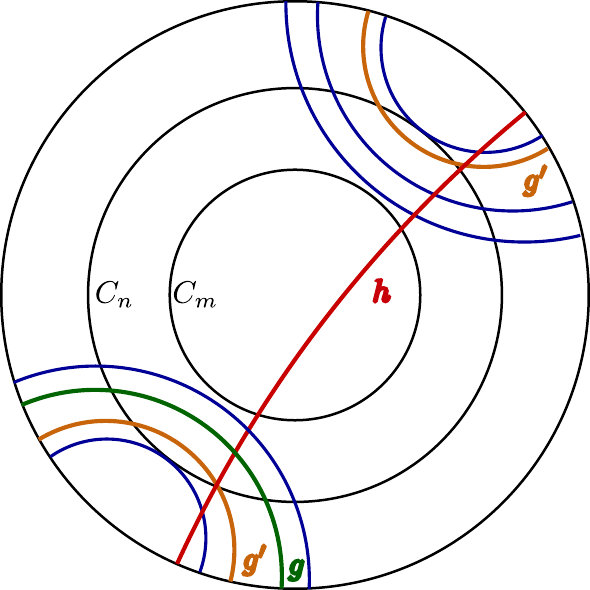}
			}
			\caption{\small{$h$ intersecting geodesics $g$ and $g'$.}}\label{figure:Subfamilies_for_two_geodesics}
		\end{figure}
		
		Note that the total measure of the geodesics in $\mathcal{F}_{m}^j + \mathcal{F}_{m}^k$ is at most $2\|\widetilde\mu\|_{Th}$. Thus,
		$$A\leq 2\Vert \widetilde \mu \Vert_{Th} C n 
		e^{-(1+\lambda)n}
		\leq C_1 n 
		e^{-(1+\lambda)n}.$$ To estimate $B$, we consider 
		\begin{align*}
			\sum_{m=n+1}^{\infty} \int_{\mathcal{F}_{m}^j + \mathcal{F}_{m}^k} \int_{\mathcal{F}_n^i} I(g, g') d\widetilde{\mu}(g)d\widetilde{\mu}(g')
			&=\int_{\mathcal{F}_n^i} \bigg(\sum_{m=n+1}^{\infty} \int_{\mathcal{F}_{m}^j + \mathcal{F}_{m}^k} I(g, g') d\widetilde{\mu}(g') \bigg) d\widetilde{\mu}(g)\\
			&\leq \int_{\mathcal{F}_n^i} \bigg(2\sum_{m=n+1}^{\infty} Ce^{-(1+\lambda)m} \bigg) d\widetilde{\mu}(g)\\
			&\leq 2\Vert \widetilde \mu \Vert_{Th} C \frac{e^{(1+\lambda)}}{e^{(1+\lambda)}-1} e^{-(1+\lambda)(n+1)}\\
			&\leq C_2 
			e^{-(1+\lambda)(n+1)}.
		\end{align*} Then,
			 $$\int_{G(\widetilde{X})} \int_{\mathcal{F}_n^i} I(g, g') d\widetilde{\mu}(g)d\widetilde{\mu}(g') 
			 \lesssim n 
			 e^{-(1+\lambda)n} + e^{-(1+\lambda)(n+1)},$$ where the universal constant in $\lesssim$ is the $max\{C_1, C_2\}$.
			 Since the number of subfamilies $\mathcal F_n^i$ is at most $\frac{2\pi}{C} e^n$, 
			 \begin{align*}
			 	\int_{G(\widetilde{X})} \int_{\mathcal{F}_n} I(g, g') d\widetilde{\mu}(g)d\widetilde{\mu}(g')
			 	&\lesssim \frac{2\pi}{C}e^n \bigg(n e^{-(1+\lambda)n} + e^{-(1+\lambda)(n+1)}\bigg)\\
			 	&\lesssim n e^{-\lambda n} + e^{-\lambda n}.
			 \end{align*}
	\end{proof}
	\begin{proof}[Proof of Theorem \ref{thm:earthquake-second-derivative}]
		It follows from Lemma \ref{lem:single-integral-estimate}, 
		\begin{align*}
			\int_{G(\widetilde{X})} \int_{G(\widetilde{X})} I(g, g') d\widetilde{\mu}(g) d\widetilde{\mu}(g')
			&=\sum_{n=1}^{\infty} \int_{G(\widetilde{X})} \int_{\mathcal{F}_n} I(g, g') d\widetilde{\mu}(g)d\widetilde{\mu}(g')\\
			&\lesssim \sum_{n=1}^{\infty} (n e^{-\lambda n} + e^{-\lambda n}) < \infty
		\end{align*} and the theorem is proved.
	\end{proof}
	
	By changing the basepoint of $\mathcal{T}(X)$, we obtain an immediate consequence of Theorem \ref{thm:earthquake-second-derivative}.
	\begin{cor}
			\label{thm:earthquake-second-derivative_for_any_t}
		Let $\mu$ be a bounded measured lamination on a conformally hyperbolic Riemann surface $X$ and let $\widetilde{\mu}$ be its lift to the universal covering $\widetilde{X}$. Then
		\begin{flalign*}
			\frac{d^2}{dt^2}{\mathcal{L}}([E^{t\widetilde{\mu}}])(\xi )
			&=  \int_{G(\widetilde{X})} \int_{G(\widetilde{X})} \\
			&\quad \bigg\{\int_{G(\widetilde{X})}\xi (h) \big[\cos(E^{t\widetilde{\mu}}(g), E^{t\widetilde{\mu}}(h))\cos(E^{t\widetilde{\mu}}(g'), E^{t\widetilde{\mu}}(h))\\
			&\quad - \frac{1}{2} \sin(E^{t\widetilde{\mu}}(g), E^{t\widetilde{\mu}}(h))\sin(E^{t\widetilde{\mu}}(g'), E^{t\widetilde{\mu}}(h)) e^{-d_{E^{t\widetilde{\mu}}(h)}}\big]dL_{[E^{t\widetilde{\mu}}]}(h)\bigg\} \\
			&\quad d\widetilde{\mu}(g) d\widetilde{\mu}(g')
		\end{flalign*} where $d_{E^{t\widetilde{\mu}}(h)}$ is the hyperbolic distance from $E^{t\widetilde{\mu}}(g) \cap E^{t\widetilde{\mu}}(h)$ to $E^{t\widetilde{\mu}}(g') \cap E^{t\widetilde{\mu}}(h)$ along the geodesic $E^{t\widetilde{\mu}}(h)$.
	\end{cor}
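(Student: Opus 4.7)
The plan is to reduce the corollary to Theorem \ref{thm:earthquake-second-derivative} by relocating the base point along the earthquake path to time $t$ and invoking the $t=0$ statement there.

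First I would view the Riemann surface $X_t:=E^{t\widetilde{\mu}}(X)$ as a new base surface, so that the role of $[id]$ is played by $[E^{t\widetilde{\mu}}]$. Under the identification $E^{t\widetilde{\mu}}:\widetilde{X}\to\widetilde{X}_t$, the measured lamination $\widetilde{\mu}$ pushes forward to a measured lamination $\widetilde{\mu}_t:=(E^{t\widetilde{\mu}})_{*}\widetilde{\mu}$ on $\widetilde{X}_t$, which is bounded for $|t|$ small (the Thurston norm of $\widetilde{\mu}_t$ is controlled by $\|\widetilde{\mu}\|_{Th}$ together with the quasisymmetric distortion of $E^{t\widetilde{\mu}}$). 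The earthquake path $s\mapsto E^{(t+s)\widetilde{\mu}}$ on $X$ translates, after this change of base point, to the earthquake path $s\mapsto E^{s\widetilde{\mu}_t}$ on $X_t$. Setting $\xi_t(h'):=\xi(h)$ when $h'=E^{t\widetilde{\mu}}(h)$, the identity
$$
\mathcal{L}([E^{(t+s)\widetilde{\mu}}])(\xi)=\mathcal{L}_{X_t}([E^{s\widetilde{\mu}_t}])(\xi_t)
$$
(where $\mathcal{L}_{X_t}$ is the Liouville map for $X_t$) is then immediate from the definition of $\mathcal{L}$ as the pull-back of the Liouville current.

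Next I would differentiate twice in $s$ at $s=0$ and apply Theorem \ref{thm:earthquake-second-derivative} to the triple $(X_t,\widetilde{\mu}_t,\xi_t)$. This produces the desired formula, but written in terms of integrals on $G(\widetilde{X}_t)$ and the Liouville measure $L_{\widetilde{X}_t}$. I would then translate it back to $G(\widetilde{X})$ by pushing forward under $E^{t\widetilde{\mu}}$: the outer integrals against $\widetilde{\mu}_t$ become integrals against $\widetilde{\mu}$; the Liouville measure $L_{\widetilde{X}_t}$ pulls back to $L_{[E^{t\widetilde{\mu}}]}$ by the very definition of the Liouville map; and the angle and distance quantities, being intrinsic to the hyperbolic geometry of $\widetilde{X}_t$, appear in the original coordinates as $\cos(E^{t\widetilde{\mu}}(g),E^{t\widetilde{\mu}}(h))$, $\sin(E^{t\widetilde{\mu}}(g),E^{t\widetilde{\mu}}(h))$ and $d_{E^{t\widetilde{\mu}}(h)}$, exactly the expressions appearing in the statement.

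The main obstacle is verifying the cocycle identity $E^{(t+s)\widetilde{\mu}}=E^{s\widetilde{\mu}_t}\circ E^{t\widetilde{\mu}}$ on $\hat{\mathbb{R}}$, together with the boundedness and measurability of the push-forward $\widetilde{\mu}_t$. For elementary earthquakes the identity is immediate from the composition rule $E^\delta_g([\widetilde{f}])=E_{\widetilde{f}(g)}^\delta$ of \S\ref{sec:earthquakes}; for a general bounded $\widetilde{\mu}$ it follows by approximating $\widetilde{\mu}$ weak$^{*}$ by finitely supported measured laminations and passing to the stratum-by-stratum limit in the construction of \cite{Thurston-pl} and \cite{EpsteinMarden}. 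Once this identity and the boundedness of $\widetilde{\mu}_t$ are in hand, no new analytic content is needed beyond Theorem \ref{thm:earthquake-second-derivative}.
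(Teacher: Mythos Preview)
Your proposal is correct and is exactly the approach the paper takes: the paper's entire proof is the single sentence ``By changing the basepoint of $\mathcal{T}(X)$, we obtain an immediate consequence of Theorem \ref{thm:earthquake-second-derivative},'' and your argument is a careful unpacking of what that basepoint change entails (the cocycle identity $E^{(t+s)\widetilde{\mu}}=E^{s\widetilde{\mu}_t}\circ E^{t\widetilde{\mu}}$, the push-forward $\widetilde{\mu}_t$, and the dictionary between quantities on $\widetilde{X}$ and $\widetilde{X}_t$). The analogous first-derivative statement, Corollary \ref{thm:earthquake-derivative_for_ any_t}, is proved in the paper by the same one-line basepoint change, so your level of detail goes beyond what the authors supply.
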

\begin{rem}\label{rem:quake-fail}
	We would like to find the second derivative along a quake-bend of the Liouville measure in a neighborhood of the Teichm\"uller space $\mathcal{T}(X)$ similar to Theorem \ref{thm:quake-der} (given by a limit). However, the quantity $d_{E^{\tau\widetilde{\mu}|_{K_j}}(h)}$ is not well-defined in the upper half-space $\mathbb{H}^3$ since there are no such intersection points $E^{\tau\widetilde{\mu}|_{K_j}}(g) \cap E^{\tau\widetilde{\mu}|_{K_j}}(h)$ and $E^{\tau\widetilde{\mu}|_{K_j}}(g') \cap E^{\tau\widetilde{\mu}|_{K_j}}(h)$.
\end{rem}


\begin{thebibliography}{5}

\vskip .5cm

\bibitem{Ahlfors} L. V. Ahlfors, {\it Conformal invariants: topics in geometric function theory.} McGraw-Hill Series in Higher Mathematics. McGraw-Hill Book Co., New York-Düsseldorf-Johannesburg, 1973.
 
\bibitem{Beardon}  A. Beardon, {\it The geometry of discrete groups.} Graduate Texts in Mathematics, 91. Springer-Verlag, New York, 1983. 
 
 \bibitem{BeardonMinda}  A. Beardon and D. Minda, {\it The hyperbolic metric and geometric function theory.} Quasiconformal mappings and their applications, 9-56, Narosa, New Delhi, 2007.
 
\bibitem{Bonahon} F. Bonahon, {\it The geometry of Teichm\"uller space via geodesic currents.} Invent. Math. 92 (1988), no. 1, 139-162.

\bibitem{BonahonSaric} F. Bonahon and D. \v Sari\'c, {\it A Thurston boundary for infinite-dimensional Teichm\"uller spaces.} Math. Ann. 380 (2021), no. 3-4, 1119-1167.

\bibitem{BonahonSozen} F. Bonahon and Y. S\"ozen, {\it Variation of the Liouville measure of a hyperbolic surface.} Ergodic Theory Dynam. Systems 23 (2003), no. 3, 729-758.

\bibitem{Chae} S. B. Chae, {\it
Holomorphy and calculus in normed spaces.}
With an appendix by Angus E. Taylor. Monographs and Textbooks in Pure and Applied Mathematics, 92. Marcel Dekker, Inc., New York, 1985.

\bibitem{Dong} X. Dong, {\it Differentiability of the Liouville map via geodesic currents.} Thesis (Ph.D.)-City University of New York. 2021. 

\bibitem{DongSaric} X. Dong and D. \v Sari\' c, {\it On complex extension of the Liouville map}, Preprint, available on Arxiv.

\bibitem{EpsteinMarden}  D. Epstein and A. Marden, {\it Convex hulls in hyperbolic space, a theorem of Sullivan, and measured pleated surfaces}. Analytical and geometric aspects of hyperbolic space (Coventry/Durham, 1984), 113-253, London Math. Soc. Lecture Note Ser., 111, Cambridge Univ. Press, Cambridge, 1987. 

\bibitem{EMM} D. Epstein, A. Marden and V. Markovic, {\it Quasiconformal homeomorphisms and the convex hull boundary}. Ann. of Math. (2) 159 (2004), no. 1, 305–336.

\bibitem{FLP} A. Fathi, F. Laudenbach and V. Po\'enaru, {\it
Thurston's work on surfaces.} 
Translated from the 1979 French original by Djun M. Kim and Dan Margalit. Mathematical Notes, 48. Princeton University Press, Princeton, NJ, 2012.

\bibitem{GardinerLakic} F. Gardiner and N. Lakic, 
{\it Quasiconformal Teichm\" uller theory.}
Mathematical Surveys and Monographs, 76. American Mathematical Society, Providence, RI, 2000.

\bibitem{GardinerHuLakic} F. Gardiner, J. Hu and N. Lakic, {\it
Earthquake curves.} (English summary) Complex manifolds and hyperbolic geometry (Guanajuato, 2001), 141-195,
Contemp. Math., 311, Amer. Math. Soc., Providence, RI, 2002. 

\bibitem{Kerckhoff}  S. Kerckhoff, {\it The Nielsen realization problem}. Ann. of Math. (2) 117 (1983), no. 2, 235-265.

\bibitem{MiyachiSaric} H. Miyachi and D. \v Sari\' c, {\it Uniform weak* topology and earthquakes in the hyperbolic plane}, Proc. Lond. Math. Soc. (3) 105 (2012), no. 6, 1123-1148.

\bibitem{Otal}  J.P. Otal, {\it About the embedding of Teichm\"uller space in the space of geodesic H\"older distributions.} Handbook of Teichm\"uller theory. Vol. I, 223-248, IRMA Lect. Math. Theor. Phys., 11, Eur. Math. Soc., Z\"urich, 2007.

\bibitem{Saric} D. \v Sari\'c, {\it Geodesic currents and Teichm\"uller space.} Topology 44 (2005), no. 1, 99-130.

\bibitem{Saric1} D. \v Sari\'c, {\it Infinitesimal Liouville distributions for Teichm\"uller space.} Proc. London Math. Soc. (3) 88 (2004), no. 2, 436-454.

\bibitem{Saric2} D. \v Sari\' c, {\it Real and complex earthquakes}, Trans. Amer. Math. Soc. 358 (2006), no. 1, 233-249.

\bibitem{Saric-tt} D. \v Sari\' c, {\it  Train tracks and measured laminations on infinite surfaces}. Trans. Amer. Math. Soc. 374 (2021), no. 12, 8903-8947.

\bibitem{Series} Caroline Series, {\it  An extension of Wolpert's derivative formula}. Pacific J. Math. 197(1) (2001), 223-239.

\bibitem{Thurston} W. Thurston, {\it On the geometry and dynamics of diffeomorphisms of surfaces}. Bull. Amer. Math. Soc. (N.S.) 19 (1988), no. 2, 417-431.

\bibitem{Thurston-pl} W. Thurston, {\it Earthquakes in two-dimensional hyperbolic geometry}. Low-dimensional topology and Kleinian groups (Coventry/Durham, 1984), 91-112, London Math. Soc. Lecture Note Ser., 112, Cambridge Univ. Press, Cambridge, 1986. 

\end{thebibliography}
\end{document}